
\documentclass{amsart}[12pt]
\usepackage[curve,matrix,arrow,frame,tips]{xy}
\usepackage{mathrsfs}
\usepackage{enumerate}
\usepackage{amssymb}
\usepackage{stmaryrd}
\usepackage{manfnt}
\usepackage[pdfborder={0 0 0}]{hyperref}

\addtolength{\textwidth}{3.7cm}
\addtolength{\textheight}{1cm}
\addtolength{\topmargin}{-.5cm}
\addtolength{\oddsidemargin}{-1.9cm}
\addtolength{\evensidemargin}{-1.9cm}

\DeclareMathOperator{\colim}{colim}
\DeclareMathOperator{\Ho}{Ho}
\DeclareMathOperator{\gl}{gl}
\DeclareMathOperator{\Hom}{Hom}
\DeclareMathOperator{\End}{End}
\DeclareMathOperator{\Aut}{Aut}
\DeclareMathOperator{\map}{map}
\DeclareMathOperator{\Id}{Id}
\DeclareMathOperator{\sh}{sh}

\newcommand{\mC}{{\mathbb C}}
\newcommand{\mH}{{\mathbb H}}
\newcommand{\mK}{{\mathbb K}}
\newcommand{\mR}{{\mathbb R}}
\newcommand{\Ab}{{\mathcal A}b}
\newcommand{\Cc}{{\mathcal C}}
\newcommand{\Uc}{{\mathcal U}}
\newcommand{\Vc}{{\mathcal V}}
\newcommand{\bF}{\mathbf{F}}
\newcommand{\bGr}{\mathbf{Gr}}
\newcommand{\bL}{\mathbf{L}}
\newcommand{\bO}{\mathbf{O}}
\newcommand{\bSO}{{\mathbf{SO}}}
\newcommand{\bSp}{{\mathbf{Sp}}}
\newcommand{\bSU}{{\mathbf{SU}}}
\newcommand{\bT}{{\mathbf T}}
\newcommand{\bU}{{\mathbf U}}
\newcommand{\bBO}{{\mathbf{BO}}}
\newcommand{\bbO}{{\mathbf{bO}}}
\newcommand{\bMO}{{\mathbf{MO}}}
\newcommand{\bmO}{{\mathbf{mO}}}
\newcommand{\spec}{Sp}
\newcommand{\spc}{spc}
\newcommand{\iso}{\cong}
\newcommand{\sm}{\wedge}
\newcommand{\tensor}{\otimes}
\newcommand{\widebar}{\overline}
\newcommand{\xra}{\xrightarrow}
\newcommand{\xla}{\xleftarrow}
\newcommand{\GH}{{\mathcal{GH}}}
\newcommand{\upi}{{\underline \pi}}
\newcommand{\td}[1]{\langle #1\rangle}
\renewcommand{\to}{\longrightarrow}
\renewcommand{\Re}{\text{Re}}

\numberwithin{equation}{section}
\newtheorem{theorem}[equation]{Theorem}
\newtheorem{cor}[equation]{Corollary}
\newtheorem{prop}[equation]{Proposition}
 
\theoremstyle{definition}
\newtheorem{defn}[equation]{Definition}
\newtheorem{rk}[equation]{Remark}
\newtheorem{eg}[equation]{Example}
\newtheorem{construction}[equation]{Construction}
\newtheorem{notation}[equation]{Notation}

\newcommand{\Danger}{%
   \par\penalty-500\begingroup\parindent=0.0pt\clubpenalty=10000%
   \def\par{\endgraf\endgroup}\leavevmode
   \hangindent=2.4em\relax
   \hangafter=-2\relax
   \hbox to 0pt{\hss\relax
      \vbox to 0pt{\vskip-8pt\relax
        \hbox
          {\relax\dbend}
\vss}\kern.70em}}

\begin{document}

\title{Global stable splittings of Stiefel manifolds}

\date{\today; 2020 MSC 55N91, 55P91; keywords: global homotopy theory, stable splitting, Stiefel manifold}

\author{Stefan Schwede}
\address{Mathematisches Institut, Universit\"at Bonn, Germany}
\email{schwede@math.uni-bonn.de}

\begin{abstract}
  We prove global equivariant refinements of Miller's
  stable splittings of the infinite orthogonal, unitary and symplectic
  groups, and more generally of the spaces $O/O(m)$, $U/U(m)$ and $Sp/Sp(m)$.
  As such, our results encode compatible equivariant stable splittings,
  for all compact Lie groups, of specific equivariant refinements of these spaces.

  In the unitary and symplectic case, we also take the actions
  of the Galois groups into account. To properly formulate these Galois-global statements,
  we introduce a generalization of global stable homotopy theory in the presence
  of an extrinsic action of an additional topological group.
\end{abstract}

\maketitle

\setcounter{tocdepth}{1}
\tableofcontents

\section*{Introduction}

The purpose of this article is to establish global equivariant refinements
of Haynes Miller's stable splittings of the infinite orthogonal, unitary and symplectic
groups, and more generally of the spaces $O/O(m)$, $U/U(m)$ and $Sp/Sp(m)$.
In his classic paper \cite{miller}, Miller showed that a filtration by the size of
the `generalized +1 eigenspaces' on the finite-dimensional
 real, complex and quaternionic Stiefel manifolds
splits after a suitable suspension.
Miller then obtains stable splittings of the infinite dimensional Stiefel manifolds $O/O(m)$,
$U/U(m)$ and $Sp/Sp(m)$ by passing to colimits.
For $m=0$ and $m=1$, these include stable splittings
of the spaces underlying the groups $O$, $SO$, $U$, $S U$ and $Sp$.

Global equivariant homotopy theory is, informally speaking, equivariant homotopy theory
with simultaneous actions of all compact Lie groups.
Unstable global homotopy theory comes up in different incarnations,
for example as the homotopy theory of topological stacks and orbispaces \cite{gepner-henriques},
or spaces with an action of the `universal compact Lie group' \cite{schwede:universal_Lie}.
Some of the protagonists of Miller's splittings have very natural and interesting equivariant and
global refinements. For instance, the unitary groups of all hermitian inner product spaces
form a global space $\bU$ that refines the infinite unitary group,
and whose $G$-equivariant homotopy type is
that of the unitary group of a complete complex $G$-universe.
The global space $\bU$ features in a global refinement of Bott periodicity,
a global equivalence $\Omega^2\bU\simeq \bU$
that encodes equivariant Bott periodicity for all compact Lie groups at once,
see \cite[Theorem 2.5.41]{schwede:global}.
There are similarly  natural global refinements
$\bO$, $\bSO$, $\bSU$ and $\bSp$ of the spaces $O$, $SO$, $S U$ and $Sp$;
more generally, we introduce global refinements $\bO/m$, $\bU/m$ and $\bSp/m$
of the spaces $O/O(m)$, $U/U(m)$ and $Sp/Sp(m)$, see Example \ref{eg:L/m} below.

In \cite{schwede:global}, the author has developed a framework for stable global homotopy theory;
one of the upshots is the compactly generated and tensor-triangulated
global stable homotopy category \cite[Section 4.4]{schwede:global}
that forgets to the homotopy categories of genuine $G$-spectra for all compact Lie groups $G$;
this global stable homotopy category is the home for the splittings proved in this paper.
The global stable homotopy category
exhibits all the `genuine' (as opposed to `naive') features of equivariant stable homotopy theory,
it is different from the stabilization obtained by inverting ordinary suspension,
and its objects represent genuine cohomology theories on orbifolds \cite{juran}.
The following is the main result of this paper;
we prove it as Theorem \ref{thm:stably split L/m}, where we also specify the splitting morphisms.

\medskip

\noindent
{\bf Theorem.} For every $m\geq 0$ there is an isomorphism
\[
   \Sigma^\infty_+ \bO/m \ \iso \
   {\bigvee}_{k\geq 0}\ \Sigma^\infty  (\bGr_k^\mR)^{\nu(k,m)\oplus\mathfrak{ad}(k)}\]
in the global stable homotopy category, an isomorphism
\[  \Sigma^\infty_+ \bU/m \ \iso \
                          {\bigvee}_{k\geq 0}\  \Sigma^\infty  (\bGr_k^\mC)^{\nu(k,m)\oplus \mathfrak{ad}(k)} \]
in the $G(\mC)$-global stable homotopy category, and an isomorphism
\[  \Sigma^\infty_+ \bSp/m \ \iso \ {\bigvee}_{k\geq 0}\ \Sigma^\infty  (\bGr_k^\mH)^{\nu(k,m)\oplus \mathfrak{ad}(k)} \]
in the $G(\mH)$-global stable homotopy category.

\medskip

In the unitary and symplectic statements,
$G(\mC)$ and $G(\mH)$ denote the Galois groups of $\mC$ and $\mH$ over $\mR$.
The flavor of global homotopy theory that encodes an additional extrinsic symmetry group
transcends the theory of \cite{schwede:global};
we devote Appendix~\ref{sec:C-global} to developing
the basics of $C$-global homotopy theory, for any topological group $C$,
including the triangulated $C$-global stable homotopy category.
The Galois-global splittings are more highly structured,
and imply splittings in the global stable homotopy category of \cite{schwede:global}.
On the right hand side of the splitting, $(\bGr_k^\mR)^V$ denotes the global Thom
space of an $O(k)$-representation $V$,
a specific global refinement of the Thom space of the vector bundle over $B O(k)$
associated to $V$, see Example \ref{eg:global Thom}.
And similarly for $(\bGr_k^\mC)^V$ and $(\bGr_k^\mH)^V$, where now $V$
is a representation of the unitary group $U(k)$ or the symplectic group $S p(k)$,
respectively.
Moreover, $\nu(k,m)$ is an $m$-fold direct sum of copies of the tautological representation
of $O(k)$, $U(k)$ or $S p(k)$ on $\mR^k$, $\mC^k$ or $\mH^k$, respectively;
Miller \cite{miller} refers to these as the {\em canonical representations}.
And $\mathfrak{ad}(k)$ is the adjoint representation of $O(k)$, $U(k)$ or $Sp(k)$, respectively.

The special case $m=0$ yields stable global splittings of $\bO$, $\bU$ and $\bSp$,
see Theorems \ref{thm:stably split O} and \ref{thm:stably split U and Sp}.
And the special case $m=1$ yields stable global splittings of $\bSO$ and $\bSU$,
see Theorem  \ref{thm:stably split SO}.
The global stable homotopy category comes with a highly forgetful functor
to the non-equivariant stable homotopy category, see \cite[Section 4.5]{schwede:global};
applying this forgetful functor to our main result Theorem \ref{thm:stably split L/m}
returns Miller's non-equivariant splitting \cite[Corollary D]{miller}.

There are natural stabilization morphisms $\bO/m\to\bO/(m+1)$ obtained from the
preferred embeddings $\mR^m\to\mR^{m+1}$, and similarly in the unitary and symplectic cases;
we write $\bO/\infty$, $\bU/\infty$ and $\bSp/\infty$ for the global spaces
obtained in the colimit over $m$.
Non-equivariantly, these colimit spaces are contractible, so there is no incentive to study their
homotopy types any further. Equivariantly and globally, however, these limit objects
are interesting and highly non-trivial, see Remark \ref{rk:O/infty}.
In Theorem \ref{thm:stably split L/infty} we formulate global stable splittings
of the global spaces $\bO/\infty$, $\bU/\infty$ and $\bSp/\infty$,
ultimately obtained from the previous splittings by passing to homotopy colimits.

\smallskip

We end this introduction with some comments
intended to prevent possible misconceptions about the nature of our global splitting result.
Firstly,
the forgetful functor from the global stable homotopy category to
the non-equivariant stable homotopy category has a left adjoint,
an exact functor of triangulated categories that preserves infinite sums,
compare \cite[Theorem 4.5.1]{schwede:global}.
So one can apply this left adjoint to Miller's original non-equivariant splittings,
yielding global refinements in a formal way.
However, the global spaces $\bO/m$, $\bU/m$ and $\bSp/m$ are {\em not}
in the essential image of the left adjoint to the forgetful functor,
so our splitting  is not a formal consequence of Miller's.

Secondly, 
for the purposes of this paper, it is essential that the stable global homotopy theory of
\cite{schwede:global}, and more generally the $C$-global stable homotopy theory
of Appendix \ref{sec:C-global}, is not just the `naive' stabilization of unstable global homotopy theory.
Indeed, as we explain in Construction \ref{con:splitting morphism},
the global splitting morphisms make essential use of certain equivariant splittings
of the top cells of Stiefel manifolds that exist after smashing with non-trivial representations.
Consequently, those equivariant splittings require a genuine (as opposed to naive)
equivariant stabilization,
and our subsequent arguments would not work in the homotopy theory of orbispaces with only
the ordinary suspension inverted.

Thirdly,
Miller proves non-equivariant stable splittings of the finite-dimensional
real, complex and quaternionic Stiefel manifolds.
He then obtains the stable splitting of $O/O(m)$,  $U/U(m)$ and $Sp/Sp(m)$
by passing to colimits.
Crabb \cite{crabb:U(n) and Omega} and Ullman \cite{ullman:equivariant_miller}
have obtained certain equivariant refinements of some of Miller's splittings
for certain Stiefel manifolds of finite-dimensional representations of specific compact Lie groups.
Our results are in an entirely different direction.
In the global context, the finite-dimensional Stiefel mani\-folds are not underlying
any interesting global homotopy types (other than left or right induced).
So I cannot think of a meaningful global splitting that generalizes the
stable splittings of the finite-dimensional Stiefel manifolds due to Miller, Crabb and Ullman.
There is a certain tradeoff: the price for obtaining global results is to
work with infinite-dimensional objects.

\smallskip

{\bf Organization.}
We start in Section \ref{sec:warmup} by explaining a special case of our main result,
the global stable splitting of the orthogonal space $\bO$
made from the orthogonal groups. The short Section \ref{sec:warmup} is logically redundant,
and intended as a gentle introduction to the main ideas in an important
special case that is technically and notationally simpler.
In Section \ref{sec:filtration} we introduce 
the orthogonal spaces $\bO/m$, $\bU/m$ and $\bSp/m$
made from real, complex and quaternionic Stiefel manifolds, respectively.
We review their `eigenspace filtrations'
and recall Miller's identification of the open strata of the filtration
as total spaces of specific vector bundles over Grassmannians.
Section \ref{sec:filtration} does not contain any new mathematics;
its purpose is to recast the known facts about the eigenspace filtration
in a form tailored to our purposes, while making all inherent symmetries explicit.
Section \ref{sec:splitting} is the heart of the paper, culminating in the
statement and proof of our main result, Theorem \ref{thm:stably split L/m};
here we construct the morphisms in the
Galois-equivariant global stable homotopy categories
that stably split the eigenspace filtrations of $\bO/m$, $\bU/m$ and $\bSp/m$.
In Section \ref{sec:limit} we explain how to pass to the colimit over $m$,
and obtain splittings of the orthogonal spaces
$\bO/\infty$, $\bU/\infty$ and $\bSp/\infty$.

This paper contains two appendices. 
Appendix \ref{sec:C-global} is a brief introduction to $C$-global homotopy theory,
where $C$ is a topological group.
This appendix is needed to give content to the Galois-equivariant refinements
of the stable global splittings of $\bU/m$ and $\bSp/m$;
for these application we are interested
in the special case where $C$ is the Galois group $G(\mC)$ of $\mC$ over $\mR$
(a discrete group of order 2),
or where $C$ is the group $G(\mH)$ of $\mR$-algebra automorphism of the quaternions
(a compact Lie group isomorphic to $S O(3)$).
However, the basic theory works just as well over arbitrary topological groups,
so we develop it in that generality.
In Appendix \ref{sec:C-global}
we in particular set up the triangulated $C$-global stable homotopy category $\GH_C$,
and identify global Thom spaces over global classifying spaces as representing objects
for equivariant homotopy groups, see Theorem \ref{thm:Bgl alpha represents}.
Appendix \ref{sec:linear algebra} provides proofs of the linear algebra facts
used in the main part of the paper.
I make no claim to originality for anything in Appendix \ref{sec:linear algebra};
its purpose is to show that all relevant arguments from linear algebra over $\mR$ and $\mC$
can be adapted to the quaternion context.

\smallskip

{\bf Acknowledgments.}
The research in this paper was supported by the DFG Schwerpunktprogramm 1786
`Homotopy Theory and Algebraic Geometry' (project ID SCHW 860/1-1)
and by the Hausdorff Center for Mathematics
at the University of Bonn (DFG GZ 2047/1, project ID 390685813).
I would like to thank Tobias Lenz for valuable feedback on this paper,
and for several suggestions for improvements.

\section{Warm-up: the global stable splitting of \texorpdfstring{$\bO$}{O}}
\label{sec:warmup}

In this short section we sketch the global stable splitting of the orthogonal space $\bO$
made from the orthogonal groups. This section is logically redundant,
as the splitting of Theorem \ref{thm:stably split O} is a special case
of our main result, Theorem \ref{thm:stably split L/m}.
I am prepending this section because
it already exhibits all the key features of the later arguments in a simpler form,
without the two additional layers of complexity arising from
the extra parameter $m$ in the more general splittings of $\bO/m$, $\bU/m$ and $\bSp/m$,
and the Galois-global embellishment in the complex and quaternionic setting.
All necessary tools for the global splitting of $\bO$ are
already contained in \cite{schwede:global}, and there is no need
to appeal to the more general $C$-global homotopy theory from Appendix \ref{sec:C-global}.
Said differently, this brief section is intended as a gentle introduction to the main ideas in an important
special case that is technically and notationally simpler.

We will freely use the language and results from \cite{schwede:global}.
In particular, we model unstable global homotopy theory by
orthogonal spaces in the sense of  \cite[Definition 1.1.1]{schwede:global},
i.e., continuous functors to spaces from the category
$\bL$ of finite-dimensional euclidean inner product spaces and $\mR$-linear isometric embeddings.
The category $\bL$  is also denoted $\mathscr I$ or $\mathcal I$ by other authors,
and orthogonal spaces are also known as $\mathscr I$-functors,
$\mathscr I$-spaces or $\mathcal I$-spaces.
And we model stable global homotopy theory by orthogonal spectra
with respect to global equivalences as defined in \cite[Definition 4.1.3]{schwede:global},
i.e., morphisms that induce isomorphisms of equivariant homotopy groups for all compact Lie groups.

\begin{eg}
  The orthogonal space $\bO$ made from the orthogonal groups is
  a particular global refinement of the infinite orthogonal group $O=\bigcup_{n\geq 1}O(n)$.
  The value of the $\bO(V)$ at an inner product space $V$ is simply the orthogonal group of $V$.
  A linear isometric embedding $\psi:V\to W$ 
  is sent to the continuous group homomorphism $\bO(\psi):\bO(V)\to \bO(W)$ defined by
  \[ \bO(\psi)(f)( \psi(v) + w )\ = \ \psi(f(v)) + w\ ,\]
  where  $v\in V$ and $w\in W- \psi(V)$.
  So informally speaking, $\bO(\psi)$ is `conjugation by $\psi$' and direct sum
  with the identity on the orthogonal complement of the image of $\psi$.
  For more detailed information about the global homotopy type of $\bO$ we refer
  the reader to the discussion in \cite[Example 2.3.6]{schwede:global};
  suffice it to say here that the underlying $G$-homotopy type of $\bO$,
  for a compact Lie group $G$, is the orthogonal group of a complete $G$-universe
  (i.e., orthogonal automorphisms of the underlying inner product space with conjugation action
  by $G$).

  A filtration of $\bO$ by orthogonal subspaces $F_k\bO$ is defined by
  the size of the +1 eigenspaces.
  At an inner product space $V$, we set
  \[ (F_k\bO)(V)\ = \
    \{ f\in O(V)\ : \  \dim(\ker(f-\Id)^\perp)\leq  k\}\ , \]
  where $(-)^\perp$ denotes the orthogonal complement.
  For fixed $k$ and varying $V$, these spaces are closed under the structure maps;
  so they define an orthogonal subspace $F_k\bO$ of $\bO$.
  Altogether we obtain an  ascending sequence of orthogonal spaces $F_k\bO$  that exhausts $\bO$.
  This filtration happens to be the skeleton filtration of the orthogonal space $\bO$
  in the sense of \cite[Definition 1.2.2]{schwede:global},
  but we won't show this fact because it plays no role for our arguments.
\end{eg}

Before we attack the global splitting of the above eigenspace filtration of $\bO$,
we recall the identification of the $k$-th subquotient
with the global Thom space of the adjoint representation of the $k$-th orthogonal group.
This identification is classical, going back at least to Frankel \cite{frankel},
and versions of it appear in  \cite[Theorem B]{miller} and \cite[Proposition 1.7]{crabb:U(n) and Omega}.
We write  $\bGr_k$ for the $k$-th {\em Grassmannian},
the orthogonal space whose value $\bGr_k(V)$ is the
Grassmannian of $k$-planes in an inner product space $V$.
The structure maps take images under linear isometric embeddings.
The orthogonal space $\bGr_k$ is a global classifying space,
in the sense of \cite[Definition 1.1.27]{schwede:global},
of the orthogonal group $O(k)$.
We write
\[  \mathfrak{ad}(k)\ = \ \{ X\in M(k,k)\ : \  X^t= - X\}  \ ,\]
for the {\em adjoint representation} of $O(k)$,
i.e., the $\mR$-vector space of skew-symmetric real matrices of size $k\times k$,
with $O(k)$ acting by conjugation.
The {\em global Thom space} over $\bGr_k$ associated to $\mathfrak{ad}(k)$
is the based orthogonal space
\[  \bGr_k^{\mathfrak{ad}(k)}\ = \  \bL(\mR^k,-)_+\sm_{O(k)} S^{\mathfrak{ad}(k)}\ .  \]
The {\em Cayley transform} provides an open embedding 
\begin{equation}\label{eq:o(k)2O(k)}
  \mathfrak c\ : \ \mathfrak{ad}(k)\ \to \ O(k)  \ , \quad
  \mathfrak c(X)\ = \  (X/2-1) (X/2+1)^{-1}
\end{equation}
onto the subspace of $O(k)$ of those matrices that do not have +1 as an eigenvalue.
The embedding is $O(k)$-equivariant for the conjugation actions on both sides.
For varying inner product spaces $V$,
the collapse maps associated to the open embeddings
\[   \bL(\mR^k,V)\times_{O(k)} \mathfrak{ad}(k) \ \to \ (F_k \bO)(V)\ ,
  \quad  [\psi, X]\ \longmapsto \ \bO(\psi)(\mathfrak c(X))\ ,    \]
form a morphism of orthogonal spaces
\begin{equation}\label{eq:subquotient identify O}
 \Psi\ : \ F_k\bO \ \to \ \bGr_k^{\mathfrak{ad}(k)} 
\end{equation}
that factors over an isomorphism $F_k\bO/ F_{k-1}\bO \iso \bGr_k^{\mathfrak{ad}(k)}$.
This isomorphism is just a coordinate-free formulation of
a special case of Miller's \cite[Theorem B]{miller}.

On page 39 of \cite{crabb:U(n) and Omega},
Crabb gives a particularly elegant exposition
of Miller's method to stably split off the top cell of the unitary group $U(k)$
in a fully equivariant manner; we reproduce the argument, adapted to $O(k)$.
We write
\[\mathfrak{s a}(k)\ =\ \{ Z\in M(k,k)\ : \ Z^t = Z \}  \]
for the vector space of symmetric real matrices of size $k\times k$;
the notation anticipates the role of $\mathfrak{s a}(k)$ as the
self-adjoint endomorphisms of $\mR^k$ with respect to the standard inner product.
Then $M(k,k)=\mathfrak{ad}(k)\oplus \mathfrak{s a}(k)$.
A basic linear algebra fact is that the smooth map
\[  O(k)\times \mathfrak{s a}(k) \ \to \ M(k,k)\ , \quad
  (A,Z)\ \longmapsto \ A\cdot\exp(-Z) \ = \ A\cdot {\sum}_{k\geq 0} (-Z)^k/k!\]
is an open embedding with image the subspace $G L_k$ of invertible matrices.
The embedding is $O(k)$-equivariant
for the conjugation action on $O(k)$, $\mathfrak{s a}(k)$ and $M(k,k)$,
so it provides an $O(k)$-equivariant collapse map
\begin{equation}\label{collapse_orthogonal}
    t_k\ : \ S^{\mathfrak{ad}(k)\oplus\mathfrak{s a}(k)}\ = \ S^{M(k,k)} \ \to \  O(k)_+\sm S^{\mathfrak{s a}(k)}
\end{equation}
in the opposite direction.
The same reasoning as in \cite[page 39]{crabb:U(n) and Omega}
shows that the composite
\[ 
  S^{\mathfrak{ad}(k)\oplus\mathfrak{s a}(k)}\  \xra{\ t_k\ } \
  O(k)_+\sm S^{\mathfrak{s a}(k)}\ \xra{\mathfrak c^\flat\sm S^{\mathfrak{s a}(k)}} \
  S^{\mathfrak{ad}(k)\oplus\mathfrak{s a}(k)}
\]
is $O(k)$-equivariantly based homotopic to the identity,
where $\mathfrak c^\flat$ is the collapse map based on the open embedding \eqref{eq:o(k)2O(k)},
see also the proof of Theorem \ref{thm:s_k,m splits L/m} below.

Everything explained so far is a review or reformulation of ideas and results from
\cite{crabb:U(n) and Omega, frankel, miller}.
 Now comes the point where we leverage $O(k)$-equivariant information
into global information, using the technology developed in \cite{schwede:global}.
Specifically, we use the global representability theorem for equivariant homotopy groups
from \cite[Theorem 4.4.3 (i)]{schwede:global}, or rather a slight extension
from integer-graded to representation-graded homotopy groups.
The collapse map \eqref{collapse_orthogonal}
gives rise to  an $\mathfrak{ad}(k)$-graded $O(k)$-equivariant homotopy class
\[  \td{t_k} \ \in \  \pi_{\mathfrak{ad}(k)}^{O(k)}( \Sigma^\infty_+ F_k\bO ) \ ,\]
the class represented by the composite
\begin{align*}
  S^{\mathfrak{ad}(k)\oplus\mathfrak{s a}(k)\oplus\nu_k} \xra{t_k\sm S^{\nu_k}}\  O(k)_+\sm S^{\mathfrak{s a}(k)\oplus\nu_k}\
&  = \ (F_k\bO)(\nu_k)_+\sm  S^{\mathfrak{s a}(k)\oplus\nu_k}  \\
    \xra{(F_k\bO)(i_2)\sm \Id} \ &(F_k\bO)(\mathfrak{s a}(k)\oplus\nu_k)_+\sm  S^{\mathfrak{s a}(k)\oplus\nu_k}
\ = \  (\Sigma^\infty_+ F_k\bO)(\mathfrak{s a}(k)\oplus\nu_k) \ .
\end{align*}
Here $\nu_k$ is the tautological $O(k)$-representation on $\mR^k$,
and $i_2:\nu_k\to\mathfrak{s a}(k)\oplus\nu_k$ is the embedding as the second summand.
The representation-graded generalization of \cite[Theorem 4.4.3 (i)]{schwede:global}
-- which is also a special case of Theorem \ref{thm:Bgl alpha represents} (i) below --
provides a unique morphism
\[ s_k\ : \ \Sigma^\infty \bGr_k^{\mathfrak{ad}(k)}\ \to \ \Sigma^\infty_+ F_k\bO  \]
in the global stable homotopy category characterized by the equation
\[ (s_k)_*(e_{O(k),\mathfrak{ad}(k)})\ = \ \td{t_k} \ ,\]
where $e_{O(k),\mathfrak{ad}(k)}$ is the tautological homotopy class
in $\pi_{\mathfrak{ad}(k)}^{O(k)}( \Sigma^\infty\bGr_k^{\mathfrak{ad}(k)})$.
The fact that $t_k$ is a stable section to $\mathfrak c^\flat:O(k)\to S^{\mathfrak{ad}(k)}$
translates into the relation
\[ (\Sigma^\infty\Psi)_*\td{t_k} \ = \ e_{O(k),\mathfrak{ad}(k)}\ ,\]
where $\Psi:F_k\bO\to\bGr_k^{\mathfrak{ad}(k)}$
is the collapse morphism \eqref{eq:subquotient identify O}.
Since the morphism $\Sigma^\infty\Psi\circ s_k$
takes the tautological class $e_{O(k),\mathfrak{ad}(k)}$ to itself,
another application of the representability property shows that $s_k$
is a section, in the global stable homotopy category, to the morphism
\[ \Sigma^\infty\Psi\ : \     \Sigma^\infty_+ F_k\bO\  \to  \ \Sigma^\infty \bGr_k^{\mathfrak{ad}(k)} \ .\]
So $s_k$ splits the distinguished triangle
\[
  \Sigma^\infty_+ F_{k-1}\bO \to 
  \Sigma^\infty_+ F_k\bO\xra{\Sigma^\infty \Psi}
  \Sigma^\infty \bGr_k^{\mathfrak{ad}(k)} \to
  (\Sigma^\infty_+ F_{k-1}\bO)\sm S^1
\]
in the global stable homotopy category.
Induction on $k$ and passage to the colimit over the eigenspace filtration
then yields:

\begin{theorem}\label{thm:stably split O}
   The morphism
  \[ \sum s_k \ :\ \bigvee_{k\geq 0}  \Sigma^\infty \bGr_k^{\mathfrak{ad}(k)} \ \to \
    \Sigma^\infty_+ \bO\]
  is an isomorphism in the global stable homotopy category.  
\end{theorem}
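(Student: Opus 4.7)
The plan is to proceed by induction on $N \geq 0$ to establish that the morphism $\bigvee_{k=0}^{N} \Sigma^\infty \bGr_k^{\mathfrak{ad}(k)} \to \Sigma^\infty_+ F_N \bO$ assembled from the splittings $s_k$ (post-composed with the inclusions $F_k\bO \hookrightarrow F_N\bO$) is an isomorphism in the global stable homotopy category, and then pass to the colimit over $N$ to deduce the statement for $\bO$.

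For the base case $N=0$, observe that $(F_0\bO)(V) = \{\Id_V\}$ at every $V$, so $F_0\bO$ is the terminal orthogonal space and $\Sigma^\infty_+ F_0\bO$ is the sphere spectrum; similarly $\bGr_0^{\mathfrak{ad}(0)}(V) \iso S^0$, so the asserted splitting at level $0$ reduces to an identification of the sphere spectrum with itself, determined by $s_0$. For the inductive step, the excerpt provides the split distinguished triangle
$$\Sigma^\infty_+ F_{k-1}\bO \to \Sigma^\infty_+ F_k\bO \xra{\Sigma^\infty\Psi} \Sigma^\infty \bGr_k^{\mathfrak{ad}(k)} \to S^1 \sm \Sigma^\infty_+ F_{k-1}\bO$$
together with the identity $\Sigma^\infty\Psi \circ s_k = \Id$. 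In any triangulated category, a split distinguished triangle exhibits the middle object as the direct sum of the outer objects via the pairing (canonical inclusion, section); hence $\Sigma^\infty_+ F_k\bO \iso \Sigma^\infty_+ F_{k-1}\bO \vee \Sigma^\infty \bGr_k^{\mathfrak{ad}(k)}$. Combining this with the inductive hypothesis identifies $\Sigma^\infty_+ F_k\bO$ with $\bigvee_{j=0}^{k}\Sigma^\infty \bGr_j^{\mathfrak{ad}(j)}$ precisely along $\sum_{j\leq k} s_j$, completing the induction.

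To pass to the colimit, I would exploit that $\bO = \bigcup_{k \geq 0} F_k\bO$ is a sequential colimit of closed inclusions of orthogonal spaces. Since $\Sigma^\infty_+$ takes such a well-behaved filtered colimit to the homotopy colimit of the corresponding tower of suspension spectra, and since the infinite wedge $\bigvee_{k\geq 0}\Sigma^\infty \bGr_k^{\mathfrak{ad}(k)}$ is the homotopy colimit of its partial wedges, the inductively constructed isomorphisms fit into a compatible ladder and induce an isomorphism in the colimit. The map read off in the colimit is exactly $\sum s_k$.

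The main obstacle is essentially bookkeeping: one must verify that the filtered colimit in orthogonal spaces truly corresponds to the homotopy colimit in the global stable homotopy category, and that the splittings at successive stages are coherent enough for their colimit to be the prescribed $\sum s_k$ (rather than an isomorphism merely existing abstractly). Once the triangulated direct-sum decomposition of the middle term of a split triangle and the colimit/wedge compatibility are in hand, the argument is clean; no further input from the geometry of $\bO$ beyond the already-established behavior of $s_k$ and $\Psi$ is needed.
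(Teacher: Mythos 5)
Your proposal is correct and follows essentially the same route as the paper's proof of the general Theorem~\ref{thm:stably split L/m}, of which Theorem~\ref{thm:stably split O} is the case $\mK=\mR$, $m=0$: one inducts on the filtration degree using the split distinguished triangles furnished by the h-cofibration property of Proposition~\ref{prop:skeleton h-cofibration} and the sections $s_k$, and then passes to the colimit. The only minor deviation is in the final step, where you invoke preservation of homotopy colimits by $\Sigma^\infty_+$ and identify the infinite wedge as a homotopy colimit of partial wedges; the paper instead sidesteps the non-uniqueness of sequential homotopy colimits in a triangulated category by directly checking that the morphism induces isomorphisms of $G$-equivariant homotopy groups for all compact Lie groups $G$, using that these homotopy groups are colimits along the filtration (by the h-cofibration property) on one side and sums over the wedge summands (by compactness) on the other.
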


This concludes the warm-up. In the next two sections
we promote the arguments leading up to Theorem \ref{thm:stably split O}
to Galois-equivariant global statements about the complex and quaterionic analogs $\bU$ and $\bSp$
of $\bO$, while simultaneously generalizing to $\bO/m$, $\bU/m$ and $\bSp/m$ for $m\geq 0$.

\section{The unstable filtration}
\label{sec:filtration}

In this section we introduce and study
the orthogonal spaces $\bO/m$, $\bU/m$ and $\bSp/m$
made from real, complex and quaternionic Stiefel manifolds, respectively.
These global objects are refinements of the spaces $O/O(m)$, $U/U(m)$
and $Sp/Sp(m)$, respectively.
We review the `eigenspace filtrations' of $\bO/m$, $\bU/m$ and $\bSp/m$
and recall Miller's identification of the open strata of the filtration
as total spaces of specific vector bundles over Grassmannians.
This section does not contain any new mathematics; we only recast known results
in a form that is particularly convenient for our purposes,
and that makes all inherent symmetries explicit.

\medskip

We start by fixing some notation and terminology.
We write $\mH$ for the skew-field of quaternions; it is the $\mR$-vector space
with basis $(1,i,j,k)$ and multiplication determined by the relations
\[ i^2 = j^2 =k^2 = i j k = -1 \ . \]
Quaternion conjugation is the anti-involution $\lambda\mapsto \bar \lambda$ of $\mH$ given by
\[ \widebar{a + b i + c j + d k} \ = \ a - b i - c j - d k \]
for $a,b,c,d\in\mR$.
The quaternions contain the complex numbers $\mC=\mR\{1,i\}$ as a subfield,
and quaternion conjugation restricts to complex conjugation on $\mC$.
The subfield $\mR$ coincides with the fixed points of conjugation.
For a uniform treatment we will also talk about conjugation on $\mR$, where it is just the identity.

In order to treat the real, complex and quaternion Stiefel manifolds
simultaneously and uniformly,
we will write $\mK$ for any one of the skew-fields $\mR$, $\mC$ or $\mH$.
The particular choice of $\mK$ is always in the background,
but it is not always reflected in the notation.

\begin{notation}[Galois groups]
Throughout our discussion we keep track of the symmetries of the skew-fields $\mR$, $\mC$ and $\mH$:
we write
\[ G(\mK)\ = \ \Aut_\mR(\mK) \]
for the `Galois group' of $\mK$ over $\mR$,
i.e., the compact Lie group of $\mR$-algebra automorphisms of $\mK$.
Obviously, $G(\mR)$ is a trivial group, and $G(\mC)$ is the usual Galois group of
the field extension $\mC$ over $\mR$.
The `Galois group' $G(\mH)$ of the quaternions is isomorphic to $S O(3)$.
More specifically, the tautological action of $G(\mH)$ on $\mH$
identifies it with the special orthogonal group
of the 3-dimensional subspace $\mR\{i,j,k\}$
with respect to the euclidean inner product $\td{x,y}=\Re(\bar x y)$.
Also, every $\mR$-automorphism of $\mH$ is inner, i.e., given by conjugation
by an element of $Sp(1)=\{x\in \mH\colon \bar x\cdot x=1\}$;
since the center of $S p(1)$ consists of $\pm 1$, the map 
\[ \text{inner}\ : \   S p(1)/\{\pm 1\} \ \xra{\ \iso \ } \ G(\mH)   \]
that sends a unit quaternion to the associated inner automorphism is an isomorphism.
\end{notation}

\begin{defn}[Inner product spaces]\label{def:mK-inner}
  We let $\mK$ be one of the skew-fields $\mR$, $\mC$ or $\mH$.
  A {\em $\mK$-inner product space} is a finite-dimensional right $\mK$-module $W$
  equipped with an $\mR$-bilinear $\mK$-valued inner product
  \[ [-,-]\ : \ W\times W \ \to \ \mK \]
  that is sesquilinear and hermitian in the sense of the relations
  \[ [x\cdot\lambda,y\cdot\mu]\ = \ \bar\lambda \cdot [x,y]\cdot \mu
    \text{\qquad and\qquad} [y,x]\ =  \ \widebar{[x,y]}  \]
  for all $x,y\in W$ and $\lambda,\mu\in\mK$, and that is positive-definite,
  i.e., the real number $[x,x]$ is positive unless $x=0$.
  Then {\em length} of a vector $x\in W$  is the real number $|x|=\sqrt{[x,x]}$.
\end{defn}

When $\mK$ is $\mR$ or $\mC$, inner product spaces are usually called {\em euclidean vector spaces}
or {\em hermitian vector spaces}, respectively.
I do not know of standard terminology for $\mH$-inner product spaces.

\medskip

\Danger Since $\mR$ and $\mC$ are commutative, one can write the
factors $\bar\lambda$,  $[x,y]$ and $\mu$ in the sesquilinearity relation
in any other order, and the more common definitions display the scalars on the same side
of the inner product.
Multiplication in the quaternions is not commutative, so for $\mK=\mH$,
one has to beware of the order of the factors $\bar\lambda$,  $[x,y]$ and $\mu$.

\medskip

We will write $\mK^k$ for the standard $k$-dimensional $\mK$-vector space
endowed with the inner product 
\[ [x,y]\ = \  \bar x_1\cdot y_1 + \dots + \bar x_k\cdot y_k \ . \]
Given a $\mK$-subspace $V$ of a $\mK$-inner product space $W$, we write
\[  W - V \ = \ \{w\in W \ : \ [v,w]=0 \text{ for all $v\in V$}\} \]
for the {\em orthogonal complement}, another $\mK$-subspace such that $W=V\oplus(W-V)$.
If the ambient space $W$ is clear from the context, we might also use the notation $V^\perp$ for $W-V$.

Given two $\mK$-inner product spaces $V$ and $W$, we write
$\bL^\mK(V,W)$ for the Stiefel manifold of $\mK$-linear isometric embeddings from $V$ to $W$,
i.e., $\mK$-linear maps that respect the inner products, and are thus necessarily injective.
For $\mK=\mR$ we sometimes omit the superscript, i.e., $\bL(V,W)=\bL^\mR(V,W)$.

\begin{construction}
We recall the filtration of $\bL^\mK(W,W\oplus \mK^m)$ whose non-equivariant stable splitability
is the subject of Miller's paper \cite{miller}.
For $k\geq 0$ we set
\begin{equation}\label{eq:eigenspace filtration}
  \bF_k(W;m)\ = \
  \{ f\in \bL^\mK(W,W\oplus\mK^m)\ : \ \dim_\mK(\ker(f - i_1)^\perp)\leq  k\}\ ,   
\end{equation}
where $i_1:W\to W\oplus\mK^m$ is the embedding of the first summand.
For a linear isometric embedding $f:W\to W\oplus\mK^m$, the kernel of $f-i_1$
coincides with the kernel of the endomorphism $f_1-1:W\to W$, i.e.,
the $+1$ eigenspace of the first component of $f$.
We will therefore refer to this filtration as the {\em eigenspace filtration}. 

For a $\mK$-linear isometric embedding $\psi:V\to W$ we define `conjugation by $\psi$'
\begin{equation}\label{eq:conjugate by varphi}
  {^\psi(-)}\ : \ \bL^\mK(V,V\oplus\mK^m)\ \to \ \bL^\mK(W,W\oplus\mK^m)
\end{equation}
by
\[ (^\psi f)( \psi(v) + w )\ = \ (\psi\oplus\mK^m)(f(v)) + (w,0)\ ,\]
where $v\in V$ and $w\in W- \psi(V)$.
This continuous map is a closed embedding that preserves the eigenspace filtration,
i.e., it sends $\bF_k(V;m)$ into $\bF_k(W;m)$.
\end{construction}

The subquotients of the eigenspace filtration
can be described explicitly as Thom spaces over Grassmannians,
see Theorem \ref{thm:strata identification} below.
The key ideas originate from Frankel's paper \cite{frankel},
and the non-equivariant version of the formulation we need is due to Miller \cite[Theorem B]{miller}.
Given a finite-dimensional $\mK$-inner product space $W$,
Miller \cite[Theorem A]{miller} uses Morse theory to identify the stratum
\[ \bF_k(W;m)\setminus \bF_{k-1}(W;m)\]
with the total space of the vector bundle over the Grassmannian $Gr_k^\mK(W)$
associated to the representation $\Hom_\mK(\mK^k,\mK^m)\oplus\mathfrak{ad}(k)$
of $O(k)$, $U(k)$ or $S p(k)$.
We now recall a different identification,
based on explicit formulas due to Crabb \cite{crabb:U(n) and Omega},
see \eqref{eq:Millers bundle iso} below.

\begin{notation}
  We write $I(k)=\bL^\mK(\mK^k,\mK^k)$ for the isometry group
  of the standard $k$-dimensional $\mK$-inner product space.
  And we write
  \[  \mathfrak{ad}(k)\ = \ \{ X\in \End_\mK(\mK^k)\ : \  X^*= - X\}  \]
  for the $\mR$-vector spaces of skew-adjoint endomorphisms of $\mK^k$,
  where $X^*$ denotes the adjoint endomorphism, compare Remark \ref{rk:adjoint}.
  If we express linear endomorphism of $\mK^k$ as matrices, the group $I(k)$
  becomes the orthogonal group $O(k)$,
  the unitary group $U(k)$, or the symplectic group $S p(k)$, respectively,
  and the adjoint operator corresponds to the conjugate-transpose matrix.
  The isometry group $I(k)$ acts on $\mathfrak{ad}(k)$ by conjugation.
  The exponential map \eqref{eq:exponential} restricts to a map
  \[ \exp\ : \ \mathfrak{ad}(k) \ \to \ I(k) \]
  from the skew-adjoint endomorphisms to the isometry group of $\mK^k$.
  This map is a local diffeomorphism around the origin,
  and it exhibits $\mathfrak{ad}(k)$
  as the adjoint representation of the compact Lie group $I(k)$,
  whence the notation.
\end{notation}

\begin{construction}
  Crabb states in \cite[Lemma 1.13]{crabb:U(n) and Omega} that for all $k,m\geq 0$, the map
  \begin{equation}\label{eq:Cayley}
    \mathfrak c\ : \ \Hom_{\mK}(\mK^k,\mK^m)\oplus \mathfrak{ad}(k) \ \to \ \bL^\mK(\mK^k,\mK^{k+m})  \ ,\quad
    \mathfrak c(Y,X)(w)  = \ (g(w),h(w))     
  \end{equation}
  with
  \begin{alignat*}{3}
    g \ &= \ (X/2 + Y^*Y/4-1)(X/2 + Y^*Y/4 +1)^{-1} &&: \ \mK^k \ \to \ \mK^k\\
    h \ &= \ Y (X/2+ Y^*Y/4+1)^{-1} &&: \ \mK^k\ \to \ \mK^m
  \end{alignat*}
  is an open embedding onto the complement of
  $\bF_{k-1}(\mK^k;m)$ in $\bL^\mK(\mK^k,\mK^{k+m})$.
  Crabb works in the complex setting and refrains from giving a proof,
  so we provide an argument in Proposition \ref{prop:Cayley invariant}.
  The open embedding $\mathfrak c$ is clearly equivariant for the action of the isometry group $I(k)$,
  acting tautologically on all instances of $\mK^k$ and trivially on all instances of $\mK^m$.
  More precisely, the relation
  \[ \mathfrak c(Y A^*, A X A^* )\ = \ (A\oplus\mK^m)\cdot \mathfrak c(Y,X)\cdot  A^* \]
  holds for all $A\in I(k)$.
  For $m=0$, the map  specializes to the Cayley transform 
  $\mathfrak c:\mathfrak{ad}(k) \to I(k)$,
  see also \eqref{eq:o(k)2O(k)} above.

  A generalization of the open embedding \eqref{eq:Cayley} is the map
  \begin{equation}\label{eq:Millers bundle iso}
    \bL^\mK(\mK^k,W)\times_{I(k)} ( \Hom_\mK(\mK^k,\mK^m)\oplus\mathfrak{ad}(k))\
    \to \ \bF_k (W;m) \ ,\quad  [\psi,Y,X]\ \longmapsto \ {^\psi (\mathfrak c(Y,X))}\ .
  \end{equation}
  This map is an open embedding onto the complement of $\bF_{k-1}(W;m)$,
  i.e., we recover Miller's \cite[Theorem A]{miller}.
  For the convenience of the reader,
  we also recall a proof of this fact in Proposition \ref{prop:Stiefel relative homeo}.

  Now and in the following, we shall abbreviate
  \[ \nu(k,m)\ = \ \Hom_\mK(\mK^k,\mK^m)\ . \]
  A reader so inclined may identify this with the space of
  $\mK$-valued matrices of size $k\times m$, or with the $m$-fold sum of
  the tautological representation of $I(k)$ on $\mK^k$, via the isomorphism
  \[ \Hom_\mK(\mK^k,\mK^m)\ \iso \ \mK^k\oplus\dots\oplus \mK^k\ , \quad
    Y\ \longmapsto \ (Y^*(e_1),\dots Y^*(e_m)) \]
  that evaluates the adjoint at the coordinate basis.
  The collapse map associated to the embedding \eqref{eq:Millers bundle iso}
  then becomes a continuous map
  \[ \Psi(W)\ : \ \bF_k(W;m)\ \to \
    \bL^\mK(\mK^k,W)_+\sm_{I(k)} S^{\nu(k,m)\oplus \mathfrak{ad}(k) }  \ .\]
\end{construction}

In the rest of this section we organize the Stiefel manifolds, their eigenspace
filtrations and the identifications of the strata and subquotients
into unstable global information.
We continue to use the orthogonal space model of \cite[Section 1]{schwede:global}
to represent unstable global homotopy types.
Since we want to keep track of the actions of the Galois group $G(\mC)$ and $G(\mH)$,
we actually need an extension
to the context of `$C$-global homotopy theory', i.e., a global homotopy theory
that incorporates an additional action of a topological group $C$.
We develop the necessary theory in Appendix \ref{sec:C-global},
modeled by orthogonal $C$-spaces,
i.e., continuous functors from the linear isometries category $\bL$ to
the category of $C$-spaces.

\begin{eg}[The orthogonal spaces $\bO/m$, $\bU/m$ and $\bSp/m$]\label{eg:L/m}
  We let $\mK$ be one of the skew fields $\mR$, $\mC$ or $\mH$.
  For $m\geq 0$, we define an orthogonal $G(\mK)$-space $\bL/m$ as follows.
  The value of $\bL/m$ at a euclidean inner product space $V$ is
  \[ (\bL/m)(V)\ = \   \bL^\mK(V_\mK,V_\mK\oplus\mK^m)\ , \]
  the Stiefel manifold of $\mK$-linear isometric embeddings of $V_\mK$ into $V_\mK\oplus\mK^m$.
  Here $V_\mK=V\tensor_\mR\mK$ is the scalar extension from $\mR$ to $\mK$,
  endowed with the $\mK$-inner product 
  \[  [x\tensor\lambda,y\tensor\mu]\ = \ \bar\lambda \cdot \td{x,y}\cdot \mu       \]
  for $x,y\in V$ and $\lambda,\mu\in\mK$.
  The space $(\bL/m)(V)$ is homeomorphic to the homogeneous space
  $O(k+m)/O(m)$,  $U(k+m)/U(m)$ or $Sp(k+m)/Sp(m)$, where $k=\dim_\mR(V)$.
  An $\mR$-linear isometric embedding $\varphi:V\to W$ is sent to the continuous map
  \[ (\bL/m)(\varphi)\ = \ {^{\varphi_\mK}(-)}\ : \
    \bL^\mK(V_\mK,V_\mK\oplus\mK^m)\ \to\ \bL^\mK(W_\mK,W_\mK\oplus\mK^m) \]
  given by conjugation by $\varphi_\mK:V_\mK\to W_\mK$ as defined in \eqref{eq:conjugate by varphi}.
  The Galois group $G(\mK)=\Aut_\mR(\mK)$ acts on $(\bL/m)(V)$ as a similar `conjugation':
  for $\tau\in G(\mK)$ we define
  \begin{align*}
    {^\tau}(-) \ : \ &\bL^\mK(V_\mK,V_\mK\oplus\mK^m)\ \to \
                       \bL^\mK(V_\mK,V_\mK\oplus\mK^m)\text{\quad by}\\ 
    {^\tau f}\ &= \ ((V\tensor\tau)\oplus\tau^m)\circ f \circ(V\tensor\tau)^{-1}\ .
  \end{align*}
  As $\tau$ varies, these maps define a continuous $G(\mK)$-action on $(\bL/m)(V)$.
  As $V$ varies, these actions assemble into a continuous $G(\mK)$-action
  on the orthogonal space $\bL/m$.
  So we can -- and will -- view $\bL/m$ as an orthogonal $G(\mK)$-space.
  Whenever the skew-field $\mK$ is specific, we replace the generic notation $\bL/m$
  by the specific notation $\bO/m$ (for $\mK=\mR$),
  $\bU/m$ (for $\mK=\mC$) or $\bSp/m$ (for $\mK=\mH$).
    
  The eigenspace filtration \eqref{eq:eigenspace filtration} provides an exhausting filtration
  \[ \ast \ = \ F_0(\bL/m)\ \subseteq \ F_1(\bL/m)\ \subseteq \ F_2(\bL/m)\ \subseteq \
    \dots\ \subseteq \ F_k(\bL/m)\ \subseteq \ \dots
  \]
  of $\bL/m$ by $G(\mK)$-invariant orthogonal subspaces with terms
  \[   F_k(\bL/m)(V)\ = \ \bF_k(V_\mK;m)\ = \
    \{ f\in \bL^\mK(V_\mK,V_\mK\oplus\mK^m)\ : \   \dim_\mK(\ker(f - i_1)^\perp)\leq  k\}\ . \]
\end{eg}

\begin{eg}[The orthogonal spaces $\bO$, $\bSO$, $\bU$, $\bSU$ and $\bSp$]
  The special cases $m=0$ and $m=1$ of Example \ref{eg:L/m}
  provide particularly interesting global refinements of the infinite orthogonal,
  special orthogonal, unitary, special unitary and symplectic groups.  
  Indeed, $(\bL/0)(V)=\bL^\mK(V_\mK,V_\mK)$
  is the isometry group of $V_\mK$; in this special case, $\bL/0$ also supports
  an ultra-commutative multiplication in the sense of \cite[Chapter 2]{schwede:global}
  by orthogonal direct sum of isometric embeddings.
  In the real case, $\bL/0$ is the ultra-commutative monoid $\bO$ of orthogonal groups
  discussed in detail in \cite[Example 2.3.6]{schwede:global}.
  In the complex case, we obtain the ultra-commutative monoid $\bU$ of unitary groups
  discussed in \cite[Example 2.3.7]{schwede:global},
  with additional action by the Galois group of $\mC$ over $\mR$.
  For $\mK=\mH$, the construction becomes the ultra-commutative monoid $\bSp$
  made from symplectic groups, compare \cite[Example 2.3.9]{schwede:global},
  with an additional action by the compact Lie group $G(\mH)$.

  In the real and complex cases, the special case $m=1$ also deserves explicit mentioning.
  Indeed, in the real case, $\bL/1$ is globally equivalent to the
  orthogonal space $\bSO$ of special orthogonal groups, the 
  ultra-commutative submonoid of $\bO$ consisting of the special orthogonal groups,
  compare Proposition \ref{prop:SO_versus_O/1}.
  Similarly, in the complex case, $\bL/1$ is $G(\mC)$-globally equivalent to the
  orthogonal space $\bSU$ of special unitary groups, see again Proposition \ref{prop:SO_versus_O/1}.
  We refer the reader to the extensive discussion in \cite[Chapter 2]{schwede:global}
  for further information about equivariant and
  global homotopical properties of $\bO$, $\bSO$, $\bU$, $\bSU$ and $\bSp$.
\end{eg}

The main result of this paper, Theorem \ref{thm:stably split L/m} below,
says that the filtration of $\bL/m$ by the orthogonal $G(\mK)$-subspaces $F_k(\bL/m)$
splits once we pass to the $G(\mK)$-global stable homotopy category.
As we record now, the individual identifications of the subquotients of the eigenspace
filtration are natural enough to assemble into isomorphisms of
$G(\mK)$-spaces.

\begin{eg}[Global Thom spaces]\label{eg:global Thom}
  We organize the various Grassmannians 
  and the relevant Thom spaces over them into orthogonal $G(\mK)$-spaces.
  We write $\bGr_k^\mK$ for the orthogonal space whose value
  at a euclidean inner product space $V$ is 
  the Grassmannian of $k$-dimensional $\mK$-subspaces in $V_\mK=V\tensor_\mR\mK$.
  The structure map $\bGr_k^\mK(\varphi)$
  associated to an $\mR$-linear isometric embedding $\varphi:V\to W$
  takes the image under the scalar extension $\varphi_\mK:V_\mK\to W_\mK$.
  A Galois automorphism $\tau\in G(\mK)$ acts on $\bGr_k^\mK(V)$ by
  taking images under $V\tensor\tau:V_\mK\to V_\mK$;
  this operation does take $\mK$-subspaces to
  $\mK$-subspaces, despite the fact that $V\tensor\tau$ need not be $\mK$-linear.
  Altogether, it makes $\bGr_k^\mK$ into an orthogonal $G(\mK)$-space.

  The Galois group acts continuously on the isometry group $I(k)=\bL^\mK(\mK^k,\mK^k)$
  by conjugation, i.e., by
  \[ {^\tau f}\ = \ \tau^k \circ f\circ (\tau^k)^{-1}  \]
  for $\tau\in G(\mK)$ and $f\in I(k)$.
  The {\em extended isometry group} is 
  \[ \tilde I(k)\ = \ I(k)\rtimes G(\mK)\ , \]
  the semidirect product for this action.
  By Theorem \ref{thm:zeta_is_A-global}, $\bGr_k^\mK$ is a global classifying $G(\mK)$-space
  for the augmentation $\epsilon_k:\tilde I(k)\to G(\mK)$ of the extended isometry group.
  In particular, the orthogonal space of $\bGr_k^\mR$ is a global classifying space
  for the orthogonal group $O(k)$, and 
  the underlying orthogonal spaces of $\bGr_k^\mC$ and $\bGr_k^\mH$ are
  global classifying spaces for the unitary group $U(k)$ and symplectic group $S p(k)$, respectively.
  
  Now we let $U$ be a real representation of the extended isometry group $\tilde I(k)$.
  The Stiefel manifold $\bL^\mK(\mK^k,V_\mK)$ comes with a continuous
  $\tilde I(k)$-action by 
  \[ {^{(A,\tau)}  \varphi} \ = \ (V\tensor\tau)\circ \varphi\circ (A\circ \tau^k)^{-1} \ .\]
  The normal subgroup $I(k)$ of $\tilde I(k)$ acts freely,
  and the orthogonal orbit space $\bL^\mK(\mK^k,V_\mK)/I(k)$
  is isomorphic to the Grassmannian $\bGr_k^\mK(V)$, by sending a linear isometric embedding to its image.
  We endow the space $\bL^\mK(\mK^k,V_\mK)\times_{I(k)} U$
  with a $G(\mK)$-action by
  \[ {^\tau [\varphi,u]}\ = \ [^{(1,\tau)}\varphi,(1,\tau) u] \ ;\]
  it is the total space of a $G(\mK)$-vector bundle over $\bGr_k^\mK(V)$, with Thom $G(\mK)$-space
 $\bL^\mK(\mK^k,V_\mK)_+\sm_{I(k)} S^U$.
  We write
  \[ (\bGr_k^\mK)^U\ = \  \bL^\mK(\mK^k,(-)_\mK)_+\sm_{I(k)} S^U \ ,      \]
  and refer to this as the {\em $G(\mK)$-global Thom space} associated to $U$.
\end{eg}

If we specialize the collapse map associated to the open embedding \eqref{eq:Millers bundle iso}
to inner product spaces of the form $W=V_\mK$,
for euclidean inner product spaces $V$, it becomes a continuous map
\[ 
  \Psi(V_\mK)\ : \ F_k(\bL/m)(V)\ = \ \bF_k(V_\mK;m)\ \to \
  \bL^\mK(\mK^k,V_\mK)_+\sm_{I(k)} S^{\nu(k,m)\oplus \mathfrak{ad}(k)}  \ = \
  (\bGr_k^\mK)^{\nu(k,m)\oplus \mathfrak{ad}(k) }(V)\ .     
\]
We have thus proved the following theorem,
which summarizes work of Miller \cite[Theorem B]{miller}
and Crabb \cite[Lemma 1.14]{crabb:U(n) and Omega} in a coordinate-free
and Galois-enhanced fashion.

\begin{theorem}\label{thm:strata identification}
  For varying euclidean inner product spaces $V$,
  the maps $\Psi(V_\mK)$ constitute a morphism of orthogonal $G(\mK)$-spaces
  \[  \Psi\ : \ F_k(\bL/m)\ \to \ (\bGr_k^\mK)^{\nu(k,m)\oplus \mathfrak{ad}(k) }      \]
  that factors through an isomorphism
    \[  F_k(\bL/m) / F_{k-1}(\bL/m)\ \iso \ (\bGr_k^\mK)^{\nu(k,m)\oplus \mathfrak{ad}(k) }   \ .     \]
\end{theorem}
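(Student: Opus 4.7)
The plan is to verify three assertions in turn: the pointwise collapse maps $\Psi(V_\mK)$ are natural with respect to $\mR$-linear isometric embeddings $\varphi:V\to W$ (yielding a morphism of orthogonal spaces), they are equivariant with respect to the Galois action of $G(\mK)$ (promoting this to a morphism of orthogonal $G(\mK)$-spaces), and the associated collapse factors through a homeomorphism on the quotient $F_k(\bL/m)/F_{k-1}(\bL/m)$.

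For the first assertion, I would unwind $\Psi(V_\mK)$ as the Pontryagin--Thom collapse associated to the open embedding \eqref{eq:Millers bundle iso} with $W=V_\mK$. Given an $\mR$-linear isometric embedding $\varphi:V\to W$, its scalar extension $\varphi_\mK:V_\mK\to W_\mK$ is a $\mK$-linear isometric embedding, and the conjugation operation $^{\varphi_\mK}(-)$ on isometric embeddings is precisely the structure map of $\bL/m$, while post-composition $\psi\mapsto \varphi_\mK\circ\psi$ defines the structure map of both $\bL^\mK(\mK^k,(-)_\mK)$ and the Grassmannian $\bGr_k^\mK$. A direct comparison of formulas then shows that \eqref{eq:Millers bundle iso} is natural in the ambient $\mK$-inner product space under $\mK$-linear isometric embeddings; since the Pontryagin--Thom collapse on an open embedding is functorial for such inclusions, $\Psi$ is a morphism of orthogonal spaces.

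For the second assertion, for $\tau\in G(\mK)$ the map $V\tensor\tau:V_\mK\to V_\mK$ is a real-linear isometry that is $\tau$-semilinear over $\mK$. Since the Cayley transform formula \eqref{eq:Cayley} involves only $\mK$-algebraic operations (adjoints, sums, products and inversion), a direct computation---using that the adjoint is intertwined with $\tau$ because the hermitian inner product on $V_\mK$ is $G(\mK)$-equivariant in the appropriate sense---gives
\[
 {}^\tau\bigl({}^\psi\mathfrak c(Y,X)\bigr)\ = \ {}^{(V\tensor\tau)\circ\psi\circ(\tau^k)^{-1}}\mathfrak c(Y\cdot\tau^{-1},X),
\]
which expresses the action of $\tau$ on $F_k(\bL/m)(V)$ in terms of the $G(\mK)$-action on pairs $(\psi,Y,X)$ defining the Thom space $(\bGr_k^\mK)^{\nu(k,m)\oplus\mathfrak{ad}(k)}(V)$. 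This is exactly the Galois action set up in Example~\ref{eg:global Thom}, so $\Psi(V_\mK)$ is $G(\mK)$-equivariant.

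The third assertion is largely formal given Proposition~\ref{prop:Stiefel relative homeo}: the open embedding \eqref{eq:Millers bundle iso} identifies the open stratum $\bF_k(V_\mK;m)\setminus\bF_{k-1}(V_\mK;m)$ with the total space of the vector bundle $\bL^\mK(\mK^k,V_\mK)\times_{I(k)}(\nu(k,m)\oplus\mathfrak{ad}(k))$. The collapse map therefore sends $F_{k-1}(\bL/m)(V)$ to the basepoint and is a continuous bijection on the quotient onto $(\bGr_k^\mK)^{\nu(k,m)\oplus\mathfrak{ad}(k)}(V)$; compactness of $\bF_k(V_\mK;m)$ (it is a closed subspace of the compact Stiefel manifold $\bL^\mK(V_\mK,V_\mK\oplus\mK^m)$) and Hausdorffness of the Thom space upgrade this to a homeomorphism, which is natural and $G(\mK)$-equivariant by the previous two steps.

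The main obstacle is the Galois equivariance computation: one must carefully chase how the $\tau$-semilinearity of $V\tensor\tau$ interacts with the formula for $\mathfrak c(Y,X)$, in particular with the adjoint $Y^*$ and with the inversion $(X/2+Y^*Y/4+1)^{-1}$. Once this algebraic check is in place, both the morphism property and the isomorphism on the quotient follow by the structural observations above.
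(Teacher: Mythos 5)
Your outline reproduces the paper's own (unwritten) argument: Theorem~\ref{thm:strata identification} is introduced with the phrase ``we have thus proved'', because Propositions~\ref{prop:Cayley invariant} and~\ref{prop:Stiefel relative homeo}, together with the conventions of Example~\ref{eg:global Thom}, already make the naturality, Galois-equivariance and quotient identification routine to verify. Your three-step checklist is indeed the right content.

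However, the displayed Galois-equivariance identity is wrong. The $(1,\tau)$-action of the extended isometry group on $\nu(k,m)\oplus\mathfrak{ad}(k)$ conjugates \emph{both} components: $Y\mapsto\tau^m\circ Y\circ(\tau^k)^{-1}$ on $\nu(k,m)=\Hom_\mK(\mK^k,\mK^m)$ and $X\mapsto\tau^k\circ X\circ(\tau^k)^{-1}$ on $\mathfrak{ad}(k)$; in particular $X$ is not left fixed, and the action on $Y$ goes in the opposite direction from what you wrote. The correct identity reads
\[
 {}^\tau\bigl({}^\psi\mathfrak c(Y,X)\bigr)\ = \ {}^{(V\tensor\tau)\circ\psi\circ(\tau^k)^{-1}}\,\mathfrak c\bigl(\tau^m\circ Y\circ(\tau^k)^{-1},\ \tau^k\circ X\circ(\tau^k)^{-1}\bigr)\ .
\]
To obtain it, first check from the definition of conjugation that ${}^\tau({}^\psi g) = {}^{(V\tensor\tau)\circ\psi\circ(\tau^k)^{-1}}\bigl((\tau^k\oplus\tau^m)\circ g\circ(\tau^k)^{-1}\bigr)$ for any $g\in\bL^\mK(\mK^k,\mK^{k+m})$, reducing the claim to conjugating $\mathfrak c$ by $\tau$. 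Since $\tau^k$ and $\tau^m$ are $\tau$-semilinear and $\tau$-isometric, one finds $({}^\tau Y)^*=\tau^k\circ Y^*\circ(\tau^m)^{-1}$, hence $({}^\tau Y)^*({}^\tau Y)=\tau^k\circ(Y^*Y)\circ(\tau^k)^{-1}$; the auxiliary endomorphism $C=X/2+Y^*Y/4$ therefore transforms by conjugation with $\tau^k$ (real scalars being fixed by $\tau$), and so do $g=(C-1)(C+1)^{-1}$ and $h=Y(1-g)/2$. With this correction your remaining steps --- naturality via ${}^{\varphi_\mK}\circ{}^\psi(-)={}^{\varphi_\mK\circ\psi}(-)$, and the quotient identification via Proposition~\ref{prop:Stiefel relative homeo} and compactness --- are correct and match the paper.
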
  

We will also need to know that the pointset level eigenspace filtration of the orthogonal space
$\bL/m$ is homotopically meaningful. This is guaranteed by the following proposition,
showing that the pairs of successive filtration stages have the homotopy extension
property internal to orthogonal $G(\mK)$-spaces.

\begin{prop}\label{prop:skeleton h-cofibration}
  For all $k,m\geq 0,$ the inclusion $F_{k-1}(\bL/m)\to F_k(\bL/m)$
  has the homotopy extension property in the category of orthogonal $G(\mK)$-spaces.
\end{prop}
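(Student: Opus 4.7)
The plan is to verify HEP objectwise and naturally in the euclidean inner product space $V$ and $G(\mK)$-equivariantly. Since limits and colimits of orthogonal $G(\mK)$-spaces are formed objectwise, producing an NDR representation of the pair $(F_k(\bL/m)(V), F_{k-1}(\bL/m)(V))$ --- namely a Urysohn function $u_V \colon F_k(\bL/m)(V) \to [0,1]$ vanishing exactly on $F_{k-1}(\bL/m)(V)$ together with a deformation $H_V \colon F_k(\bL/m)(V) \times [0,1] \to F_k(\bL/m)(V)$ that pulls the open neighborhood $\{u_V < 1\}$ into $F_{k-1}(\bL/m)(V)$ at time $1$, rel $F_{k-1}(\bL/m)(V)$ --- that is functorial in $V$ and $G(\mK)$-equivariant will suffice.

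Both pieces of NDR data are to be built from the open embedding \eqref{eq:Millers bundle iso}, which identifies $F_k(\bL/m)(V) \setminus F_{k-1}(\bL/m)(V)$ with the total space of the $I(k)$-vector bundle $\bL^\mK(\mK^k, V_\mK) \times_{I(k)} (\nu(k,m) \oplus \mathfrak{ad}(k))$. Fix an $I(k)$-invariant norm on $\nu(k,m) \oplus \mathfrak{ad}(k)$; it descends to a continuous norm function $\rho$ on the open stratum. I would set $u_V(f) = \varphi(\rho(f))$ on the stratum and $u_V|_{F_{k-1}(\bL/m)(V)} = 0$, for a bump function $\varphi \colon [0,\infty) \to [0,1]$ that vanishes at $0$ and at $\infty$ and is constantly $1$ on some middle interval. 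For $H_V$, I would use a radial dilation of the bundle coordinates, $[\psi, Y, X] \mapsto [\psi, \alpha(t,\rho) Y, \alpha(t,\rho) X]$, extended by the identity on $F_{k-1}(\bL/m)(V)$, with $\alpha$ chosen so that small-$\rho$ vectors are dilated toward infinity (hence toward $F_{k-1}(\bL/m)(V)$) at $t=1$, while large-$\rho$ vectors are fixed. The structure maps of $\bL/m$ are given by conjugation by $\varphi_\mK$, and the Galois action is conjugation by elements of $G(\mK)$; both intertwine with the bundle identification \eqref{eq:Millers bundle iso} and preserve the chosen invariant norm, so $u_V$ and $H_V$ assemble into morphisms of orthogonal $G(\mK)$-spaces.

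The main technical obstacle is continuity of $u_V$ and $H_V$ across the boundary $F_{k-1}(\bL/m)(V)$: I need that $\rho \to \infty$ on any sequence in the open stratum converging to a point of $F_{k-1}(\bL/m)(V)$. This is a \emph{properness-at-infinity} statement for the Cayley-type parametrization of \eqref{eq:Millers bundle iso}: blowing up the matrix $X/2 + Y^*Y/4$ forces the first component $g$ of $\mathfrak{c}(Y,X)$ to acquire $+1$ as an eigenvalue in the limit, so the limit drops into $F_{k-1}$. This is essentially the content of Proposition \ref{prop:Stiefel relative homeo} (proved in Appendix \ref{sec:linear algebra}); I would extract from that proof the quantitative statement that \eqref{eq:Millers bundle iso} is a relative homeomorphism with proper parametrization at infinity. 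Once that is in hand, continuity --- and hence HEP in orthogonal $G(\mK)$-spaces --- is immediate.
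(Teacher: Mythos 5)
Your plan has a genuine gap, and it sits precisely at the step you call the ``main technical obstacle.'' You propose to derive the required properness-at-infinity, and hence the continuity of $u_V$ and $H_V$ across the closed subspace $F_{k-1}(\bL/m)(V)$, from Proposition~\ref{prop:Stiefel relative homeo}. But that proposition asserts only that the map $[\psi,f]\mapsto {}^\psi f$ is a relative homeomorphism of compact pairs; it contains no information about \emph{how} the bundle coordinates $(Y,X)$ degenerate as one approaches $F_{k-1}$, and in particular no information that would let you define the time-one ``landing map'' of your radial dilation, let alone verify its continuity. A relative homeomorphism from an associated-bundle pair onto $(\bF_k(W;m),\bF_{k-1}(W;m))$ does not by itself make the target an NDR pair; one also needs the source pair at the $\mK^k$-level to be a cofibration pair, which is exactly what you have not established. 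That fact is not extractable from the Cayley parametrization \eqref{eq:Millers bundle iso}: it is the content of the Morse--Bott pushout square in Miller's Lemma~2.4, which exhibits $\bL^\mK(\mK^k,\mK^{k+m})$ as obtained from $\bF_{k-1}(\mK^k;m)$ by attaching the disc bundle of $\nu(k,m)\oplus\mathfrak{ad}(k)$ along its sphere bundle. The paper's proof cites this pushout (upgrading the equivariance from $I(k)$ to $\tilde I(k)$ by checking that Miller's Morse--Bott function and metric are invariant under the larger group), obtains the $\tilde I(k)$-equivariant HEP for $\bF_{k-1}(\mK^k;m)\to\bL^\mK(\mK^k,\mK^{k+m})$ as a cobase change of the sphere--disc inclusion, and only then uses Proposition~\ref{prop:Stiefel relative homeo} to globalize by a second cobase change. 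Your argument omits the Morse-theoretic input entirely, and there is no substitute for it in what you have written.

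There are also smaller errors in the proposed NDR data. A Str\o m/Steenrod representation requires $u_V^{-1}(0)=F_{k-1}(\bL/m)(V)$ exactly; your bump function $\varphi$ vanishes at $0$ as well as at $\infty$, so $u_V$ also vanishes on the zero section of the bundle, which is in the \emph{open} stratum. Likewise the deformation should fix points with small $\rho$ and push points with large $\rho$ out to $F_{k-1}$ (which sits at $\rho=\infty$), whereas you dilate small-$\rho$ vectors to infinity, which is the wrong direction and, worse, would force a discontinuous jump across the region where $\varphi\equiv 1$. These are fixable once the plan is reoriented, but they indicate that the NDR geometry has not been set up correctly. Finally, you should verify that your chosen norm on $\nu(k,m)\oplus\mathfrak{ad}(k)$ is invariant under the full extended isometry group $\tilde I(k)$, not only $I(k)$; the paper flags exactly this equivariance upgrade as the one non-automatic point in citing Miller.
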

\begin{proof}
  The key ingredient is the pushout square of $\tilde I(k)$-spaces
  \[\xymatrix{
      S(\nu(k,m)\oplus\mathfrak{a d}(k))\ar[r]\ar[d] &
      D(\nu(k,m)\oplus\mathfrak{a d}(k))\ar[d] \\
      \bF_{k-1}(\mK^k;m)\ar[r] & \bL^\mK(\mK^k,\mK^{k+m}) }\]
  proved by Miller in \cite[Lemma 2.4]{miller},
  using a Morse theoretic construction that goes back to Frankel \cite{frankel}.
  The upper row is the inclusion of the unit sphere into the unit disc of the representation
  $\nu(k,m)\oplus\mathfrak{a d}(k)$ of the extended isometry group $\tilde I(k)$,
  with respect to some $\tilde I(k)$-invariant inner product.
  There is one caveat, namely that Miller states the equivariance
  only for the group $I(k)$ (for which he writes $G_k$), whereas
  we need equivariance for the larger group $\tilde I(k)$.
  But this is easy to fix: as explained in \cite[\S 4]{miller},
  the vertical maps in the pushout square are derived from an
  explicit Morse-Bott function and an explicit Riemannian metric on $\bL^\mK(\mK^k,\mK^{k+m})$
  specified on page 417 of \cite{miller};
  both the Morse-Bott function and the metric are invariant
  even for the extended isometry group $\tilde I(k)$.
  Hence also the gradient vector field, the associated flow and the vertical attaching map
  in the pushout square derived from this data are equivariant
  for the larger group $\tilde I(k)$.

  The inclusion of unit sphere into unit disc of any orthogonal representation of a compact Lie
  group has the equivariant homotopy extension property. Since the homotopy extension property
  is also stable under cobase change, we conclude that
  the inclusion $\bF_{k-1}(\mK^k;m)\to \bL^\mK(\mK^k,\mK^{k+m})$
  has the $\tilde I(k)$-equivariant homotopy extension property.  

  For every $\mK$-inner product space $W$, the map
  \[  \bL^\mK(\mK^k,W)\times_{I(k)} \bL^\mK(\mK^k,\mK^{k+m})  \ \to \ \bF_k(W;m) \ ,\quad
    [\psi,f]\ \longmapsto \ {^\psi f} \]
  is a relative homeomorphism from the pair
  \[ \left(
      \bL^\mK(\mK^k,W)\times_{I(k)} \bL^\mK(\mK^k,\mK^{k+m}),\
      \bL^\mK(\mK^k,W)\times_{I(k)}  \bF_{k-1}(\mK^k;m)  \right)\]
  to the pair $(\bF_k(W;m),\bF_{k-1}(W;m))$,
  compare Proposition \ref{prop:Stiefel relative homeo}.
  Taking $W=V_\mK$ and letting $V$ vary over all euclidean inner product spaces
  thus yields a pushout square of orthogonal $G(\mK)$-spaces
  \[\xymatrix{
      \bL^\mK(\mK^k,(-)_\mK)\times_{I(k)} \bF_{k-1}(\mK^k;m) \ar[r]\ar[d] &
      \bL^\mK(\mK^k,(-)_\mK)\times_{I(k)} \bL^\mK(\mK^k,\mK^{k+m})
      \ar[d] \\
      F_{k-1}(\bL/m)\ar[r] & F_k(\bL/m) }\]
  Since  the inclusion $\bF_{k-1}(\mK^k;m)\to \bL^\mK(\mK^k,\mK^{k+m})$
  has the $\tilde I(k)$-equivariant homotopy extension property,
  the upper horizontal morphism has the homotopy extension property
  internal to orthogonal $G(\mK)$-spaces.
  The homotopy extension property is stable under cobase change, so this proves the claim.  
\end{proof}

\section{The stable splitting}
\label{sec:splitting}

In this section we construct the morphisms in the $G(\mK)$-global stable homotopy category
that stably split the eigenspace filtration of $\bL/m$,
and we prove our main result Theorem \ref{thm:stably split L/m},
the  $G(\mK)$-global splitting of the unreduced suspension spectrum of $\bL/m$.
The construction is based on a particularly natural way to split off 
the top cell of the Stiefel manifold $\bL^\mK(\mK^k,\mK^{k+m})$
that was exhibited by Crabb \cite[Theorem 1.16]{crabb:U(n) and Omega}.
Compared to the statement in Miller's original paper on the subject \cite{miller},
the main refinement is the additional equivariance with respect to the extended
isometry groups;
this extra equivariance is crucial for our arguments, as it allows us to leverage
the information to the $G(\mK)$-global context.

Crabb actually states the analogous splitting in the unitary situation,
i.e., for the complex Stiefel manifold $\bL^\mC(\mC^k,\mC^{k+m})$
(which he denotes $U(\mC^k;\mC^m)$). 
For the convenience of the reader, we recall the construction in a form
that works simultaneously over $\mR$, $\mC$ and $\mH$.
The upshot is the map $t_{k,m}$ in \eqref{collapse_symplectic} below;
needless to say that I claim no originality for the construction of this map.

\begin{construction}
  We write 
  \[   \mathfrak{s a}(k)\ =\ \{ Z\in \End_\mK(\mK^k)\ : \ Z^* = Z \}  \]
  for the $\mR$-vector spaces of self-adjoint
  endomorphisms of $\mK^k$.
  A basic linear algebra fact is that for all $k,m\geq 0$, the smooth map
  \begin{equation}\label{eq:split_embedding}
    \bL^\mK(\mK^k,\mK^{k+m})\times \mathfrak{s a}(k) \ \to \ \Hom_\mK(\mK^k,\mK^{k+m})\ , \quad
    (A,Z)\ \longmapsto \ A\circ\exp(-Z)   
  \end{equation}
  is an open embedding with image the subspace of $\mK$-linear monomorphisms.
  For the convenience of the reader, we give a proof in Proposition \ref{prop: inverse emb mod m over H}.
  The extended isometry group $\tilde I(k)$ acts on $\Hom_\mK(\mK^k,\mK^{k+m})$
  by conjugation, i.e., by
  \[ {^{(A,\tau)}  X} \ = \ ( (A\circ\tau^k)\oplus\tau^m)\cdot X \cdot (A\circ\tau^k)^{-1} \]
  for $(A,\tau)\in\tilde I(k)=I(k)\rtimes G(\mK)$ and $X:\mK^k\to\mK^{k+m}$.
  This conjugation action leaves the subspace $\bL^\mK(\mK^k,\mK^{k+m})$ invariant;
  and in the case $m=0$, the action leaves the subspace $\mathfrak{s a}(k)$ invariant.
  The open embedding \eqref{eq:split_embedding}
  is $\tilde I(k)$-equivariant, so it provides an $\tilde I(k)$-equivariant collapse map
  \[ S^{\Hom_\mK(\mK^k,\mK^{k+m})} \ \to \ \bL^\mK(\mK^k,\mK^{k+m})_+\sm S^{\mathfrak{s a}(k)}\ . \]
   We recall that $\nu(k,m)=\Hom_\mK(\mK^k,\mK^m)$; we use the equivariant direct sum decomposition
  \[   \nu(k,m)\oplus \mathfrak{ad}(k)\oplus\mathfrak{s a}(k) \ \xra{\ \iso \ }\
    \Hom_\mK(\mK^k,\mK^k\oplus \mK^m) \ , \quad (f,X,Y)\ \longmapsto \ (X+Y,f)\ ,\]
 to interpret the previous collapse map as a continuous $\tilde I(k)$-equivariant map
 \begin{equation}\label{collapse_symplectic}
   t_{k,m}\ : \ S^{\nu(k,m)\oplus\mathfrak{ad}(k)}\sm S^{\mathfrak{s a}(k)}\ \to \
    \bL^\mK(\mK^k,\mK^{k+m})_+\sm S^{\mathfrak{s a}(k)}\ .
  \end{equation}
  As is implicit in \cite{crabb:U(n) and Omega}, and as we recall in the proof of
  Theorem \ref{thm:s_k,m splits L/m}, the map $t_{k,m}$ represents a section,
  in the $\tilde I(k)$-equivariant stable homotopy category,
  of the collapse map $\bL^\mK(\mK^k,\mK^{k+m})\to S^{\nu(k,m)\oplus\mathfrak{ad}(k)}$
  associated to the open embedding \eqref{eq:Cayley}.
\end{construction}

Now comes the point where we leverage $\tilde I(k)$-equivariant information
into $G(\mK)$-global information.
We model $G(\mK)$-global stable homotopy theory by orthogonal $G(\mK)$-spectra, i.e.,
orthogonal spectra equipped with a continuous action of $G(\mK)$.
We refer to Appendix \ref{sec:C-global} for a detailed discussion of this model,
the notion of  $G(\mK)$-global equivalence
(see Definition \ref{def:stable C-global equivalence}),
and the $G(\mK)$-global stable homotopy category \eqref{eq:GH_C}. 
The next construction turns the collapse map $t_{k,m}$ into a morphism
$s_{k,m} :\Sigma^\infty (\bGr_k^\mK)^{\nu(k,m)\oplus\mathfrak{ad}(k)}\to \Sigma^\infty_+ F_k(\bL/m)$
in the $G(\mK)$-global stable homotopy category.

\begin{construction}\label{con:splitting morphism}
  The unreduced suspension spectrum of the orthogonal $G(\mK)$-space $F_k(\bL/m)$
  is an orthogonal $G(\mK)$-spectrum $\Sigma^\infty_+ F_k(\bL/m)$.
  Restricting actions along the augmentation
  $\epsilon_k:\tilde I(k)\to G(\mK)$ of the extended isometry group
  provides an orthogonal $\tilde I(k)$-spectrum $\epsilon_k^*(\Sigma^\infty_+ F_k(\bL/m))$.
  We will now promote the collapse map \eqref{collapse_symplectic} to 
  an equivariant  homotopy class
  \[  \td{t_{k,m}} \ \in \
    \pi_{\nu(k,m)\oplus\mathfrak{ad}(k)}^{\tilde I(k)}\left( \epsilon_k^*( \Sigma^\infty_+ F_k(\bL/m))\right) \ , \]
  graded by the $\tilde I(k)$-representation $\nu(k,m)\oplus\mathfrak{ad}(k)$;
  see \eqref{eq:repgraded_homotopy_group} for equivariant homotopy groups
  graded by a representation.

  We write $\nu_k$ for the tautological orthogonal representation
  of $\tilde I(k)$ on $\mK^k$, i.e., the underlying $\mR$-vector space $u\mK^k$
  endowed with the euclidean inner product $\td{x,y}=\Re[x,y]$, the real part of
  the $\mK$-inner product.
  The $\mK$-linear isometric embedding
  \[   
    \zeta \ : \ \mK^k \ \to \ (u \mK^k)\tensor_\mR\mK \ = \  (\nu_k)_\mK 
  \]
  is defined in \eqref{eq:define zeta}. We write
  $i_2 :  (\nu_k)_\mK  \to ( \mathfrak{s a}(k)\oplus \nu_k)_\mK$
  for the embedding as the second summand. Conjugation by $i_2\zeta$
  as defined in \eqref{eq:conjugate by varphi} is then
  a continuous $\tilde I(k)$-equivariant map
  \[ {^{i_2 \zeta}(-)}\ : \ \bL^\mK(\mK^k,\mK^{k+m})= \bF_k(\mK^k;m)\ \to \
    \bF_k((\mathfrak{s a}(k)\oplus\nu_k)_\mK;m) = 
    F_k(\bL/m)(\mathfrak{s a}(k)\oplus \nu_k)\ .
  \]
We define $\td{t_{k,m}}$ as the homotopy class of the following composite
\begin{align*}
  S^{\nu(k,m)\oplus\mathfrak{ad}(k)\oplus \mathfrak{s a}(k)\oplus\nu_k} \
  \xra{t_{k,m}\sm S^{\nu_k}} \  &\bL^\mK(\mK^k,\mK^{k+m})_+\sm S^{\mathfrak{s a}(k)\oplus\nu_k}\\
  \xra{{^{i_2\zeta}(-)}_+\sm S^{\mathfrak{s a}(k)\oplus \nu_k}} \ 
      &F_k(\bL/m)(\mathfrak{s a}(k)\oplus\nu_k)_+\sm  S^{\mathfrak{s a}(k)\oplus\nu_k} 
          = (\Sigma^\infty F_k(\bL/m))(\mathfrak{s a}(k)\oplus\nu_k) \ .
\end{align*}
\end{construction}

The tautological homotopy class $e_{k,\nu(k,m)\oplus\mathfrak{ad}(k)}$ is defined in \eqref{eq:define_e_k,U}.
The representability result in Corollary \ref{cor:J_is_A-global} provides a unique morphism
\begin{equation}\label{eq:define s_k,m}
  s_{k,m}\ : \ \Sigma^\infty (\bGr_k^\mK)^{\nu(k,m)\oplus\mathfrak{ad}(k)}\ \to \ \Sigma^\infty_+ F_k(\bL/m)
\end{equation}
in the $G(\mK)$-global stable homotopy category characterized by the equation
\begin{equation}\label{eq:sJe_sigmat}
   (s_{k,m})_*(e_{k,\nu(k,m)\oplus\mathfrak{ad}(k)})\ = \ \td{t_{k,m}} \ .
\end{equation}
The collapse morphism of orthogonal $G(\mK)$-spaces
\[  \Psi\ : \ F_k(\bL/m)\ \to \ (\bGr_k^\mK)^{\nu(k,m)\oplus \mathfrak{ad}(k) }      \]
was defined in  Theorem \ref{thm:strata identification}.

\begin{theorem}\label{thm:s_k,m splits L/m}
  For all $k,m\geq 0$, the relation
  \[ (\Sigma^\infty\Psi)_*\td{t_{k,m}}  \ =\ e_{k,\nu(k,m)\oplus\mathfrak{ad}(k)}  \]  
  holds in the group $\pi^{\tilde I(k)}_{\nu(k,m)\oplus\mathfrak{ad}(k)}(\epsilon_k^*(\Sigma^\infty (\bGr_k^\mK)^{\nu(k,m)\oplus\mathfrak{ad}(k)}))$. 
  Hence the composite
  \[  \Sigma^\infty (\bGr_k^\mK)^{\nu(k,m)\oplus\mathfrak{ad}(k)}\
    \xra{s_{k,m}} \ \Sigma^\infty_+ F_k(\bL/m)\
    \xra{\Sigma^\infty \Psi} \
    \Sigma^\infty (\bGr_k^\mK)^{\nu(k,m)\oplus\mathfrak{ad}(k)}\]
  is the identity in the $G(\mK)$-global stable homotopy category.
\end{theorem}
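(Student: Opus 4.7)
The plan is to first establish the $\tilde I(k)$-equivariant homotopy-group identity $(\Sigma^\infty\Psi)_*\td{t_{k,m}}=e_{k,\nu(k,m)\oplus\mathfrak{ad}(k)}$, from which the second assertion follows formally: by the defining equation \eqref{eq:sJe_sigmat}, both $\Sigma^\infty\Psi\circ s_{k,m}$ and the identity morphism of $\Sigma^\infty(\bGr_k^\mK)^{\nu(k,m)\oplus\mathfrak{ad}(k)}$ send $e_{k,\nu(k,m)\oplus\mathfrak{ad}(k)}$ to itself, so the bijection of Corollary \ref{cor:J_is_A-global} then forces them to coincide in the $G(\mK)$-global stable homotopy category.

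To compute $(\Sigma^\infty\Psi)_*\td{t_{k,m}}$, I would first observe the factorisation $\Psi\circ({^{i_2\zeta}(-)}_+)=\iota\circ\mathfrak{c}^\flat$, where $\mathfrak{c}^\flat:\bL^\mK(\mK^k,\mK^{k+m})_+\to S^{\nu(k,m)\oplus\mathfrak{ad}(k)}$ is the $\tilde I(k)$-equivariant collapse map associated to the open embedding $\mathfrak{c}$ of \eqref{eq:Cayley}, and $\iota(u)=[i_2\zeta,u]$ inserts at the $I(k)$-orbit representative $i_2\zeta\in\bL^\mK(\mK^k,(\mathfrak{s a}(k)\oplus\nu_k)_\mK)$. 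This holds because $\Psi$ is the collapse of the open embedding $[\psi,Y,X]\mapsto{^\psi\mathfrak{c}(Y,X)}$, and on the image of ${^{i_2\zeta}(-)}$ one may choose $\psi=i_2\zeta$ as the orbit representative. The core technical step is then to verify that $(\mathfrak{c}^\flat\sm S^{\mathfrak{s a}(k)})\circ t_{k,m}$, as a self-map of $S^{\nu(k,m)\oplus\mathfrak{ad}(k)\oplus\mathfrak{s a}(k)}$, is $\tilde I(k)$-equivariantly based homotopic to the identity --- the Galois-enhanced generalisation to arbitrary $\mK$ and $m\geq 0$ of the $O(k)$-equivariant argument recalled in Section \ref{sec:warmup}.

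This composite is the Pontryagin--Thom collapse of the composite open embedding $(Y,X,Z)\mapsto\mathfrak{c}(Y,X)\cdot\exp(-Z)$ into $\Hom_\mK(\mK^k,\mK^{k+m})\iso\nu(k,m)\oplus\mathfrak{ad}(k)\oplus\mathfrak{s a}(k)$. I would isotope it to the identity embedding in two stages: a rescaling $\Phi_s(Y,X,Z)=(\mathfrak{c}(sY,sX)\exp(-sZ)+i_1)/s$ for $s\in(0,1]$, extended continuously at $s=0$ to the identity via the Taylor expansion $\mathfrak{c}(sY,sX)\exp(-sZ)=-i_1+s(i_1(X+Z)+i_2Y)+O(s^2)$; followed by the straight-line translation $\Psi_t(Y,X,Z)=\mathfrak{c}(Y,X)\exp(-Z)+(1-t)i_1$ connecting $\Phi_1$ back to the original embedding. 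All ingredients are $\tilde I(k)$-equivariant because $\mathfrak{c}$ and $\exp$ are defined by real-coefficient power-series expressions and $i_1\in\Hom_\mK(\mK^k,\mK^{k+m})$ is a fixed point of the conjugation action of $\tilde I(k)$, so the Pontryagin--Thom collapses of these equivariantly isotopic open embeddings are equivariantly based homotopic. Once this homotopy is in place, $(\Sigma^\infty\Psi)_*\td{t_{k,m}}$ is represented by $\iota\sm S^{\mathfrak{s a}(k)\oplus\nu_k}$, which by Theorem \ref{thm:Bgl alpha represents} is the canonical representative of $e_{k,\nu(k,m)\oplus\mathfrak{ad}(k)}$. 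The main obstacle is the careful bookkeeping to ensure that the rescalings and identifications respect the semidirect product action of $\tilde I(k)=I(k)\rtimes G(\mK)$, particularly in the non-commutative quaternion case, where the linear-algebraic facts of Appendix \ref{sec:linear algebra} will be needed.
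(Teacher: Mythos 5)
Your proposal is correct and follows essentially the same route as the paper: reduce the second assertion to the homotopy-group identity via the representability result, identify $(\Sigma^\infty\Psi)_*\td{t_{k,m}}$ with the insertion of $\mathfrak c^\flat\circ t_{k,m}$ at the orbit representative $i_2\zeta$, and show that this collapse composite is $\tilde I(k)$-equivariantly homotopic to the identity by analyzing the open embedding $F(Y,X,Z)=\mathfrak c(Y,X)\exp(-Z)$. Your explicit two-stage isotopy (rescaling plus the straight-line translation by $i_1$) spells out the standard fact the paper cites to Crabb, and correctly handles the point that $F$ sends the origin to $-i_1\ne 0$, a detail the paper's phrasing --- that the differential of $F$ at the origin is the identity --- leaves implicit.
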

\begin{proof}
  The theorem hinges on the fact that the collapse map $t_{k,m}$ defined in \eqref{collapse_symplectic}
  indeed splits off the top cell of the Stiefel manifold $\bL^\mK(\mK^k,\mK^{k+m})$
  in the genuine $\tilde I(k)$-equivariant stable homotopy category.
  More precisely, the composite
\[ 
    S^{\nu(k,m)\oplus \mathfrak{ad}(k)}\sm S^{\mathfrak{s a}(k)}\  \xra{\ t_{k,m}\ } \
    \bL^\mK(\mK^k,\mK^{k+m})_+\sm S^{\mathfrak{s a}(k)}\ \xra{\mathfrak c^\flat\sm S^{\mathfrak{s a}(k)}} \
   S^{\nu(k,m)\oplus\mathfrak{ad}(k)}\sm S^{\mathfrak{s a}(k)}     
  \]
  is $\tilde I(k)$-equivariantly based homotopic to the identity,
  where $\mathfrak c^\flat :\bL^\mK(\mK^k,\mK^{k+m})\to S^{\nu(k,m)\oplus\mathfrak{ad}(k)}$
  is the collapse map based on the open embedding \eqref{eq:Cayley}.
  The argument given by Crabb \cite{crabb:U(n) and Omega} in the unitary situation 
  works just as well over $\mR$ and over $\mH$, as follows.
  Collapsing along open embeddings is transitive, so the  composite
  $(\mathfrak c^\flat\sm S^{\mathfrak{s a}(k)})\circ t_{k,m}$ is the collapse map of the composite open embedding:
  \begin{align*}
    F \ : \      \Hom_\mK(\mK^k,\mK^{k+m}) =  \nu(k,m)\times\mathfrak{ad}(k)&\times
\mathfrak{s a}(k)\
    \to\ \Hom_\mK(\mK^k,\mK^{k+m})\\
    F(Y,X,Z)\quad &= \quad \mathfrak c(Y,X)\cdot \exp(-Z)
  \end{align*}
  Inspection of the explicit formulas shows that the open embedding $F$ is smooth
  and its differential at the origin is the identity.
  So the associated collapse map is 
  $\tilde I(k)$-equivariantly based homotopic to the identity.
  This is the desired equivariant homotopy
  from  $(\mathfrak c^\flat\sm S^{\mathfrak{s a}(k)})\circ t_{k,m}$ to the identity of $S^{\Hom(\mK^k,\mK^{k+m})}$.

  Now we can determine $(\Sigma^\infty\Psi)_*\td{t_{k,m}}$.
  Expanding definitions shows that $(\Sigma^\infty\Psi)_*\td{t_{k,m}}$
  is represented by the following composite, 
  smashed with the identity of $S^{\nu_k}$:
  \begin{align*}
    S^{\nu(k,m)\oplus\mathfrak{ad}(k)\oplus \mathfrak{s a}(k)} \
    \xra{t_{k,m}\sm S^{\mathfrak{s a}(k)}} \  &\bL^\mK(\mK^k,\mK^{k+m})_+\sm S^{\mathfrak{s a}(k)}\\
    \xra{c^\flat\sm S^{\mathfrak{s a}(k)}} \  & S^{\nu(k,m)\oplus\mathfrak{ad}(k)}\sm S^{\mathfrak{s a}(k)}\ =\
                                             \left(\bL^\mK(\mK^k,\mK^k)_+\sm_{I(k)}  S^{\nu(k,m)\oplus\mathfrak{ad}(k)}\right)\sm  S^{\mathfrak{s a}(k)}  \\
    \xra{\zeta_*\sm S^{\mathfrak{s a}(k)}} \ & \left(\bL^\mK(\mK^k,(\nu_k)_\mK)_+\sm_{I(k)}  S^{\nu(k,m)\oplus\mathfrak{ad}(k)}\right)\sm  S^{\mathfrak{s a}(k)}  \\
= \qquad   &(\bGr_k^\mK)^{\nu(k,m)\oplus\mathfrak{ad}(k)}(\nu_k)\sm S^{\mathfrak{s a}(k)} \\
                   \xra{(i_2)_*\sm S^{\mathfrak{s a}(k)}}\  &(\bGr_k^\mK)^{\nu(k,m)\oplus\mathfrak{ad}(k)}(\mathfrak{s a}(k)\oplus\nu_k)\sm S^{\mathfrak{s a}(k)} \ .
  \end{align*}
  Since the composite of the first two maps is equivariantly
  homotopic to the identity, we conclude that
  $(\Sigma^\infty\Psi)_* \td{t_{k,m}} = e_{k,\nu(k,m)\oplus\mathfrak{ad}(k)}$.
  Together with the defining property \eqref{eq:sJe_sigmat} of $s_{k,m}$, this shows that
  \[  ((\Sigma^\infty\Psi)\circ s_{k,m})_*(e_{k,\nu(k,m)\oplus\mathfrak{ad}(k)}) \
    = \     (\Sigma^\infty\Psi)_* \td{t_{k,m}} \ = \ 
    e_{k,\nu(k,m)\oplus\mathfrak{ad}(k)} \ . \]
  The representability property of Corollary \ref{cor:J_is_A-global}
  thus shows that the composite $(\Sigma^\infty\Psi)\circ s_{k,m}$ is the identity.
\end{proof}

We write
\[ s_{k,m}^\mK\ : \ \Sigma^\infty (\bGr_k^\mK)^{\nu(k,m)\oplus\mathfrak{ad}(k)}\ \to \ \Sigma^\infty_+ \bL/m   \]
for the composite of the morphism $s_{k,m}$ of \eqref{eq:define s_k,m}
and the morphism of suspension spectra induced by the inclusion $F_k(\bL/m)\to\bL/m$.
The following theorem is the main result of this paper.

\begin{theorem}\label{thm:stably split L/m}
  Let $\mK$ be one of the skew-fields $\mR$, $\mC$ or $\mH$.
  For every $m\geq 0$, the morphism
  \[ \sum s_{k,m}^\mK \ :\ \bigvee_{k\geq 0}  \Sigma^\infty (\bGr_k^\mK)^{\nu(k,m)\oplus \mathfrak{ad}(k)} \ \to \
    \Sigma^\infty_+ \bL/m\]
  is an isomorphism in the $G(\mK)$-global stable homotopy category.  
\end{theorem}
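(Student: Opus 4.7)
The plan is to mirror the argument sketched in the warm-up of Section \ref{sec:warmup} for $\mK=\mR$ and $m=0$, substituting the more general inputs from Sections \ref{sec:filtration} and \ref{sec:splitting}. The three ingredients to assemble are: the identification of the filtration subquotients (Theorem \ref{thm:strata identification}), the equivariant homotopy extension property of the filtration (Proposition \ref{prop:skeleton h-cofibration}), and the section property of the candidate splittings $s_{k,m}$ (Theorem \ref{thm:s_k,m splits L/m}). Together, these will split the eigenspace filtration stage by stage, after which passing to the colimit will give the full statement.

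Concretely, for each $k\geq 1$, Proposition \ref{prop:skeleton h-cofibration} shows that the inclusion $F_{k-1}(\bL/m)\hookrightarrow F_k(\bL/m)$ is an h-cofibration of orthogonal $G(\mK)$-spaces, so the associated cofiber sequence of suspension spectra extends to a distinguished triangle
\[ \Sigma^\infty_+ F_{k-1}(\bL/m)\ \to \ \Sigma^\infty_+ F_k(\bL/m)\ \xra{\Sigma^\infty\Psi}\ \Sigma^\infty (\bGr_k^\mK)^{\nu(k,m)\oplus\mathfrak{ad}(k)}\ \to \ S^1\sm \Sigma^\infty_+ F_{k-1}(\bL/m) \]
in the $G(\mK)$-global stable homotopy category, where we have identified the cofiber using Theorem \ref{thm:strata identification}. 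By Theorem \ref{thm:s_k,m splits L/m}, the morphism $s_{k,m}$ is a section of $\Sigma^\infty\Psi$, so the triangle splits, producing an isomorphism
\[\Sigma^\infty_+ F_k(\bL/m)\ \iso \ \Sigma^\infty_+ F_{k-1}(\bL/m)\vee \Sigma^\infty (\bGr_k^\mK)^{\nu(k,m)\oplus\mathfrak{ad}(k)}.\]
Noting that $F_0(\bL/m)$ is a one-point orthogonal space (any isometric embedding $f:V_\mK\to V_\mK\oplus\mK^m$ with $\ker(f-i_1)^\perp=0$ must coincide with $i_1$), an induction on $k$ yields an isomorphism
\[ \bigvee_{j=0}^k\, \Sigma^\infty (\bGr_j^\mK)^{\nu(j,m)\oplus\mathfrak{ad}(j)}\ \xra{\ \iso \ }\ \Sigma^\infty_+ F_k(\bL/m) \]
whose $j$-th wedge summand is $s_{j,m}$ followed by the inclusion $F_j(\bL/m)\to F_k(\bL/m)$; these finite isomorphisms are automatically compatible as $k$ grows, since $s_{j,m}$ itself does not depend on $k$.

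To conclude, we pass to the colimit in $k$. The filtration $\{F_k(\bL/m)\}_{k\geq 0}$ exhausts $\bL/m$, and each inclusion is an h-cofibration of orthogonal $G(\mK)$-spaces, so the pointset colimit computes the sequential $G(\mK)$-global homotopy colimit; consequently $\Sigma^\infty_+\bL/m$ is the $G(\mK)$-global homotopy colimit of $\{\Sigma^\infty_+ F_k(\bL/m)\}_k$. Since infinite wedges in $\GH_{G(\mK)}$ are computed as homotopy colimits of their finite partial wedges, taking the colimit of the compatible finite splittings above yields the claimed isomorphism $\bigvee_{k\geq 0}\Sigma^\infty(\bGr_k^\mK)^{\nu(k,m)\oplus\mathfrak{ad}(k)}\iso \Sigma^\infty_+\bL/m$. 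The step most likely to require care is the last one: one needs the standard facts that suspension spectra of exhausting h-cofibration filtrations compute sequential homotopy colimits in $\GH_{G(\mK)}$, and that the infinite wedge in $\GH_{G(\mK)}$ agrees with this homotopy colimit of partial wedges; both are formal consequences of the model-level description of $G(\mK)$-global orthogonal spectra established in Appendix \ref{sec:C-global}.
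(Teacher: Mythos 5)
Your proposal is correct and takes essentially the same route as the paper: split each filtration stage using Proposition \ref{prop:skeleton h-cofibration}, Theorem \ref{thm:strata identification}, and Theorem \ref{thm:s_k,m splits L/m}, then pass to the colimit over $k$. The only cosmetic difference is in the last step: where you invoke the homotopy-colimit description of infinite wedges and exhausting h-cofibration filtrations, the paper instead checks the isomorphism directly on $G$-equivariant homotopy groups (which commute with the filtration colimit for every $\alpha\colon G\to G(\mK)$), which is slightly more elementary and sidesteps any discussion of sequential homotopy colimits in the triangulated category.
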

\begin{proof}
  In a first step we prove the analogous splitting result for $F_n(\bL/m)$ by induction on $n\geq 0$,
  namely that the morphism
  \begin{equation}\label{eq:sum02n}
 \sum s_{k,m} \ :\ \bigvee_{0\leq k\leq n}  \Sigma^\infty \bGr_k^{\nu(k,m)\oplus \mathfrak{ad}(k)} \ \to \
    \Sigma^\infty_+ F_n(\bL/m)    
  \end{equation}
  is an isomorphism in the $G(\mK)$-global stable homotopy category.  
  There is nothing to show for $n=0$, so we assume $n\geq 1$.
  The inclusion $F_{n-1}(\bL/m)\to F_n(\bL/m)$ has the homotopy extension property
  in the category of orthogonal $G(\mK)$-spaces
  by Proposition \ref{prop:skeleton h-cofibration}.
  So the induced morphism $\Sigma^\infty_+ F_{n-1}(\bL/m)\to \Sigma^\infty_+ F_n(\bL/m)$ is an h-cofibration
  of orthogonal $G(\mK)$-spectra.
  Hence there is a distinguished triangle
  \[
    \Sigma^\infty_+ F_{n-1}(\bL/m) \to 
    \Sigma^\infty_+ F_n(\bL/m)\xra{\Sigma^\infty \Psi}
    \Sigma^\infty (\bGr_n^\mK)^{\nu(n,m)\oplus \mathfrak{ad}(n)} \to  (\Sigma^\infty_+ F_{n-1}(\bL/m))\sm S^1    
  \]
  in the $G(\mK)$-global stable homotopy category;
  we have used Theorem \ref{thm:strata identification} to substitute
  $F_n(\bL/m)/F_{n-1}(\bL/m)$ by $(\bGr_n^\mK)^{\nu(n,m)\oplus \mathfrak{ad}(n)}$.
  Theorem \ref{thm:s_k,m splits L/m} splits the distinguished triangle, so the morphism
    \[
    \text{incl} + s_{n,m} \ : \ \Sigma^\infty_+  F_{n-1}(\bL/m)\ \vee\ \Sigma^\infty  (\bGr_n^\mK)^{\nu(n,m)\oplus\mathfrak{ad}(n)} \ \to \
    \Sigma^\infty_+  F_n(\bL/m)
  \]
  is an isomorphism in $\GH_{G(\mK)}$.
  Induction on $n$ then proves that the morphism \eqref{eq:sum02n}
  is an isomorphism in the $G(\mK)$-global stable homotopy category.

  Passage to the colimit over $n$ then yields the claim, as follows.
  For every continuous homomorphism $\alpha:G\to G(\mK)$, the $G$-equivariant homotopy groups
  of source and target of the morphism in question are the colimit over $n$ of the truncated versions
  \eqref{eq:sum02n}. So the morphism of the theorem induces isomorphisms of
  $G$-equivariant homotopy groups for all such homomorphisms $\alpha$, and is thus
  an isomorphism in the $G(\mK)$-global stable homotopy category.
\end{proof}

We take the time to make some particularly interesting special cases of Theorem \ref{thm:stably split L/m}
explicit.
For $m=0$, the orthogonal space $\bL/0$ specializes to the ultra-commutative monoids $\bO$,
$\bU$ and $\bSp$ of orthogonal, unitary and symplectic groups
that we discuss in detail in Examples 2.3.6, 2.3.7 and 2.3.9 of \cite{schwede:global}.
In the real case, we already presented the global stable splitting of $\bO$
in Theorem \ref{thm:stably split O}.
In the complex and quaternionic case, Theorem \ref{thm:stably split L/m} specializes to:

\begin{theorem}\label{thm:stably split U and Sp}
The morphism
\[ \sum s^\mC_{k,0}\ : \ 
  \bigvee_{k\geq 0} \Sigma^\infty (\bGr_k^\mC)^{\mathfrak{ad}(k)} \ \to \ \Sigma^\infty_+ \bU \] 
is an isomorphism in the $G(\mC)$-global stable homotopy category.
The morphism
  \[ \sum s^\mH_{k,0}\ : \ 
  \bigvee_{k\geq 0} \Sigma^\infty (\bGr_k^\mH)^{\mathfrak{ad}(k)} \ \to \ \Sigma^\infty_+ \bSp\] 
is an isomorphism in the $G(\mH)$-global stable homotopy category.
\end{theorem}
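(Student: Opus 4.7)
The plan is to deduce both statements directly as the $m=0$ specializations of the main result, Theorem \ref{thm:stably split L/m}: for the complex statement I apply it with $\mK = \mC$, and for the quaternionic statement with $\mK = \mH$. The Galois groups $G(\mC)$ and $G(\mH)$ appearing in Theorem \ref{thm:stably split L/m} match those in the claim automatically.

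The first routine step is to observe that $\nu(k,0) = \Hom_\mK(\mK^k, \mK^0) = 0$ is the zero representation of $\tilde I(k)$, so that the $\tilde I(k)$-representation $\nu(k,0) \oplus \mathfrak{ad}(k)$ reduces to $\mathfrak{ad}(k)$. Consequently the global Thom space $(\bGr_k^\mK)^{\nu(k,0) \oplus \mathfrak{ad}(k)}$ coincides with $(\bGr_k^\mK)^{\mathfrak{ad}(k)}$, exactly matching the wedge summands appearing in the theorem.

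Next I would identify $\bL/0$ with the ultra-commutative monoids $\bU$ and $\bSp$ as orthogonal $G(\mK)$-spaces. By definition $(\bL/0)(V) = \bL^\mK(V_\mK, V_\mK \oplus \mK^0) = \bL^\mK(V_\mK, V_\mK)$ is the isometry group of $V_\mK$; this agrees with the definition of $\bU(V)$ (Example 2.3.7 of \cite{schwede:global}) for $\mK = \mC$ and with $\bSp(V)$ (Example 2.3.9 of \cite{schwede:global}) for $\mK = \mH$. A quick check confirms that the structure maps arising from linear isometric embeddings and the Galois actions match those in the standard definitions of $\bU$ and $\bSp$; this identification is noted in the paragraph introducing the orthogonal spaces $\bO$, $\bSO$, $\bU$, $\bSU$ and $\bSp$ above.

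Under these identifications the splitting morphisms $s_{k,0}^\mC$ and $s_{k,0}^\mH$ defined in \eqref{eq:define s_k,m} become exactly the maps named in the theorem, and Theorem \ref{thm:stably split L/m} then yields the desired isomorphisms in the $G(\mC)$-global and $G(\mH)$-global stable homotopy categories. There is no genuine obstacle here: the entire content of the proof is a matter of unpacking definitions, since all substantive work (the construction of $s_{k,m}$, the splitting of the top cell via $t_{k,m}$, the inductive assembly along the eigenspace filtration, and the passage to the colimit) has already been carried out in establishing Theorem \ref{thm:stably split L/m}.
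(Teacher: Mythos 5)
Your proposal is correct and coincides with the paper's own treatment: the paper presents Theorem \ref{thm:stably split U and Sp} simply as the $m=0$ specialization of Theorem \ref{thm:stably split L/m}, after noting that $\bL/0$ agrees with $\bU$ (resp.\ $\bSp$) and that $\nu(k,0)=0$. Your slightly more careful unpacking of these identifications adds nothing new but is entirely in line with the intended argument.
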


In the real and complex situation, the special case $m=1$
of Theorem \ref{thm:stably split L/m} is also worth making explicit
because $\bO/1$ and $\bU/1$ can be replaced by other global spaces
made from special orthogonal and special unitary groups.
The closed embeddings
\[ i_1\circ - \ : \ O(V) \ \to \ \bL(V,V\oplus \mR)\text{\qquad and\qquad}
i_1\circ - \ : \  U(V_\mC) \ \to \ \bL^\mC(V_\mC,V_\mC\oplus\mC)\]
form morphisms of orthogonal spaces and orthogonal $G(\mC)$-spaces
\[ \iota\ : \ \bO \ \to\ \bO/1 \text{\qquad and\qquad} \iota\ : \ \bU \ \to\ \bU/1\ , \]
respectively.
We denote by $\bSO$ and $\bSU$ the orthogonal subspaces of $\bO$ and $\bU$ with respective values 
\[ \bSO(V)\ =\ S O(V) \text{\qquad and\qquad} \bSU\ = \ S U(V_\mC)\ ,\]
the isometries of determinant 1.

\begin{prop}\label{prop:SO_versus_O/1}
  The morphisms 
  \[  \iota|_{\bSO}\ :\ \bSO\ \to\ \bO/1\text{\qquad and\qquad}
    \iota|_{\bSU}\ :\ \bSU\ \to\ \bU/1 \]
  are a global equivalence of orthogonal spaces,
  and a $G(\mC)$-global equivalence of orthogonal $G(\mC)$-spaces, respectively. 
\end{prop}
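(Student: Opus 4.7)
The plan is to identify the target $\bL/1$ explicitly and reduce the statement to the global contractibility of a sphere orthogonal space.

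First, I would observe that every isometric embedding $\psi \in \bL^\mK(V_\mK, V_\mK \oplus \mK)$ admits a unique extension to a $\mK$-linear isometry $\tilde\psi$ of $V_\mK \oplus \mK$ of $\mK$-determinant~$1$: the orthogonal complement $\psi(V_\mK)^\perp$ is 1-dimensional over $\mK$, so its unit vectors form either a two-element set (for $\mK=\mR$) or a circle (for $\mK=\mC$), and exactly one choice yields determinant~$+1$. The assignment $\psi \mapsto \tilde\psi$ is $G(\mK)$-equivariant, natural in isometric embeddings $V \to V'$, and yields an isomorphism of orthogonal $G(\mK)$-spaces
\[\bL/1(V)\ \iso\ G_\mK(V_\mK \oplus \mK)\,,\]
where $G_\mR = SO$ and $G_\mC = SU$. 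Under this iso, both $\iota|_{\bSO}$ and $\iota|_{\bSU}$ become the canonical inclusions $G_\mK(V_\mK) \hookrightarrow G_\mK(V_\mK \oplus \mK)$ that act as $f$ on $V_\mK$ and fix the last basis vector of $\mK$.

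Second, each such inclusion is the isotropy group of the last basis vector under the transitive action of $G_\mK(V_\mK \oplus \mK)$ on the unit sphere $S(V_\mK \oplus \mK)$, so it fits into a principal fiber bundle
\[G_\mK(V_\mK)\ \to\ G_\mK(V_\mK \oplus \mK)\ \to\ S(V_\mK \oplus \mK)\,.\]
Naturality in $V$ assembles these into a fiber sequence of orthogonal $G(\mK)$-spaces whose quotient is the sphere orthogonal space $V \mapsto S(V_\mK \oplus \mK)$. The main step is to show this sphere orthogonal $G(\mK)$-space is $G(\mK)$-globally weakly contractible: for any continuous homomorphism $\alpha: G \to G(\mK)$ from a compact Lie group $G$ and any closed subgroup $H \leq G$, the fixed-point sphere $S(V_\mK \oplus \mK)^H = S((V_\mK \oplus \mK)^H)$ is an ordinary sphere whose dimension grows without bound as $V$ ranges through a complete $G$-universe; hence the filtered colimit computing the relevant equivariant homotopy groups vanishes in each degree.

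Finally, applying the long exact sequence of the fibration at each $V$ and passing to the filtered colimit (which preserves exactness of abelian groups), one deduces that $\iota|_{\bSO}$ and $\iota|_{\bSU}$ induce isomorphisms on all equivariant homotopy groups, establishing the claimed (respectively $G(\mC)$-)global equivalences. The main obstacle is to verify that the long exact sequence of the fibration interacts well with the filtered colimit used to define $\pi_k^G$ of an orthogonal $G(\mK)$-space in the sense of Appendix~\ref{sec:C-global}; this should follow from the standard compatibility of homotopy groups of pointed $G$-spaces with filtered colimits along closed $G$-cofibrations, combined with Proposition~\ref{prop:skeleton h-cofibration}-style pointset cofibrancy for the inclusion of the fiber.
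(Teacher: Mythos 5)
Your proof takes a genuinely different route from the paper's. The paper's proof is short: it invokes a general theorem from \cite[Theorem 1.1.10]{schwede:global} stating that for any \emph{closed} orthogonal space $Y$, the natural morphism $Y\to\sh Y$ into its additive shift is a global equivalence; it then identifies $\sh(\bSO)$ (resp.\ $\sh(\bSU)$) with $\bO/1$ (resp.\ $\bU/1$) by the restriction isomorphism $-\circ i_1\colon SO(V\oplus\mR)\xra{\iso}\bL(V,V\oplus\mR)$, and composes. Your first step is the inverse of this restriction isomorphism (the determinant-one extension), which is correct; but you then re-prove the relevant special case of the shift-equivalence theorem by exhibiting the shift inclusion as the fiber inclusion of the equivariant bundle $G_\mK(V_\mK)\to G_\mK(V_\mK\oplus\mK)\to S(V_\mK\oplus\mK)$ and showing that the base sphere orthogonal $G(\mK)$-space is globally weakly contractible. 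This is a legitimate, more hands-on argument, and it has the pedagogical virtue of displaying the sphere bundle behind the equivalence; the paper's route is shorter because it inherits the already-proved general fact (at the price, as the paper notes, of checking that \cite[Theorem 1.1.10]{schwede:global} extends to the $G(\mC)$-global context).

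That said, you should address a real gap before the argument is complete: you apply the long exact sequence of the fibration to $\Gamma(\alpha)$-fixed point spaces (what you phrase somewhat informally as $H$-fixed points), but passing to fixed points of a fiber bundle does not automatically yield a fibration. You need equivariant local triviality of $G_\mK(V_\mK\oplus\mK)\to S(V_\mK\oplus\mK)$. This does hold here --- the graph $\Gamma(\alpha)\subseteq G\times G(\mK)$ acts by group automorphisms of $G_\mK(V_\mK\oplus\mK)$ preserving the closed subgroup $G_\mK(V_\mK)$, so (for instance via an invariant Riemannian metric or the equivariant slice theorem) the bundle $K\to K/H$ admits equivariant local sections and its $\Gamma(\alpha)$-fixed points form a Serre fibration --- but it requires an argument, which you omit. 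Your stated ``main obstacle'' (compatibility of the LES with filtered colimits) is comparatively mild, since the colimits are along closed embeddings and homotopy groups of $T_1$ spaces commute with such colimits. The more substantive lemma to supply is the equivariant fibration property, after which the remaining steps go through as you sketch.
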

\begin{proof}
  By \cite[Theorem 1.1.10]{schwede:global}, the closed embeddings
  \[ S O(V)\ \to \ S O(V\oplus\mR) \ , \quad A \ \longmapsto \ A\oplus\mR\]
  form a global equivalence $\bSO \to \sh(\bSO)$,
  where $\sh$ denotes the additive shift of an orthogonal space
  in the sense of \cite[Example 1.1.11]{schwede:global}.
  The restriction homeomorphisms
  \[ -\circ i_1\ : \ S O(V\oplus\mR) \ \xra{\ \iso \ } \ \bL(V,V\oplus\mR)\]
  form an isomorphism of orthogonal spaces $\sh(\bSO)\iso \bO/1$.
  The restriction of $\iota:\bO\to\bO/1$ to $\bSO$ coincides with
  the composite of the former global equivalence and
  the latter isomorphism; so $\iota|_{\bSO}$ is a global equivalence, too.
  The argument in the complex case is analogous, with one caveat: one must convince oneself that
  \cite[Theorem 1.1.10]{schwede:global} generalizes to the $G(\mC)$-global context,
  with the same proof.
\end{proof}

Because the morphism $\iota|_{\bSO}:\bSO\to\bO/1$ is a global equivalence of orthogonal spaces,
the morphism
\[ \Sigma^\infty_+ \iota|_{\bSO}\ : \ \Sigma^\infty_+ \bSO \ \to\ \Sigma^\infty_+ \bO/1\]
is a global equivalence of orthogonal spectra, see \cite[Corollary 4.1.9]{schwede:global}.
Similarly, Proposition \ref{prop:suspension is homotopical} shows
that the morphism of orthogonal $G(\mC)$-spectra
$\Sigma^\infty_+\iota|_{\bSU}: \Sigma^\infty_+\bSU\to \Sigma^\infty_+\bU/1$
is a $G(\mC)$-global equivalence.
So the special case $m=1$ of Theorem \ref{thm:stably split L/m} yields the following result.

\begin{theorem}\label{thm:stably split SO}
  The morphism
  \[  
    \sum (\Sigma^\infty_+\iota|_{\bSO})^{-1}\circ s^\mR_{k,1}\ : \ 
    \bigvee_{k\geq 0} \Sigma^\infty (\bGr_k^\mR)^{\nu_k\oplus \mathfrak{ad}(k)} \ \to \ \Sigma^\infty_+ \bSO
  \]
  is an isomorphism in the global stable homotopy category, and the morphism 
  \[  
    \sum (\Sigma^\infty_+\iota|_{\bSU})^{-1}\circ s^\mC_{k,1}\ : \ 
    \bigvee_{k\geq 0} \Sigma^\infty (\bGr_k^\mC)^{\nu_k\oplus \mathfrak{ad}(k)} \ \to \ \Sigma^\infty_+ \bSU
  \]
  is an isomorphism in the $G(\mC)$-global stable homotopy category.
\end{theorem}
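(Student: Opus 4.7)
The plan is to deduce both splittings as direct corollaries of the main theorem (Theorem \ref{thm:stably split L/m}) specialized to $m = 1$, combined with the comparison between $\bSO$ and $\bO/1$ (respectively $\bSU$ and $\bU/1$) established in Proposition \ref{prop:SO_versus_O/1} and the paragraph following it.

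First I would specialize Theorem \ref{thm:stably split L/m} to $m = 1$ in the real case to obtain an isomorphism
\[ \sum s^\mR_{k,1}\ :\ \bigvee_{k\geq 0} \Sigma^\infty (\bGr_k^\mR)^{\nu(k,1)\oplus \mathfrak{ad}(k)}\ \to\ \Sigma^\infty_+ \bO/1 \]
in the global stable homotopy category. To match the statement of the present theorem, I need to identify the $O(k)$-representation $\nu(k,1) = \Hom_\mR(\mR^k,\mR)$ with the tautological representation $\nu_k$ on $\mR^k$; this is precisely the isomorphism $Y \mapsto Y^*(e_1)$ recorded in the discussion of $\nu(k,m)$ in Section \ref{sec:filtration}. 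The same reasoning in the complex case, using the $G(\mC)$-equivariant identification $\nu(k,1) \cong \nu_k$ of $U(k)$-representations, gives an analogous isomorphism for $\bU/1$ in the $G(\mC)$-global stable homotopy category.

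Next I would appeal to the discussion following Proposition \ref{prop:SO_versus_O/1}: Proposition \ref{prop:SO_versus_O/1} gives that $\iota|_{\bSO}:\bSO\to\bO/1$ is a global equivalence of orthogonal spaces, and by \cite[Corollary 4.1.9]{schwede:global} the induced morphism $\Sigma^\infty_+\iota|_{\bSO}: \Sigma^\infty_+\bSO \to \Sigma^\infty_+\bO/1$ is a global equivalence of orthogonal spectra; in particular it is an isomorphism in the global stable homotopy category. Composing its inverse with the splitting $\sum s^\mR_{k,1}$ above then produces the required isomorphism onto $\Sigma^\infty_+\bSO$.

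The $\bSU$-statement is handled by the same two-step recipe, now working in the $G(\mC)$-global stable homotopy category: Proposition \ref{prop:SO_versus_O/1} provides the $G(\mC)$-global equivalence $\iota|_{\bSU}:\bSU\to\bU/1$, and Proposition \ref{prop:suspension is homotopical} promotes this to a $G(\mC)$-global equivalence $\Sigma^\infty_+\iota|_{\bSU}:\Sigma^\infty_+\bSU \to \Sigma^\infty_+\bU/1$; composing the inverse of this isomorphism in $\GH_{G(\mC)}$ with the splitting for $\bU/1$ yields the desired result. There is no genuine obstacle beyond assembling these pieces; the only minor point requiring attention is verifying the equivariant identification $\nu(k,1) \cong \nu_k$, but this is immediate from the definition of $\nu(k,m)$ and is in fact equivariant for the extended isometry group $\tilde I(k)$, so it respects all symmetries relevant for both the global and the $G(\mC)$-global splittings.
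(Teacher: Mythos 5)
Your proposal is correct and follows essentially the same route the paper takes: the paper states Theorem \ref{thm:stably split SO} as an immediate consequence of the case $m=1$ of Theorem \ref{thm:stably split L/m}, transported along the global equivalences $\Sigma^\infty_+\iota|_{\bSO}$ and $\Sigma^\infty_+\iota|_{\bSU}$ discussed just before the theorem. Your additional remark on identifying $\nu(k,1)=\Hom_\mK(\mK^k,\mK)$ with $\nu_k$ via $Y\mapsto Y^*(e_1)$, $\tilde I(k)$-equivariantly, is a sensible explicit verification of a notational step the paper leaves implicit.
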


\section{The limit case}
\label{sec:limit}

The stable global splittings of $\bO/m$, $\bU/m$ and $\bSp/m$ are sufficiently compatible
so that we can pass to the colimit in $m$, and also obtain
stable global splittings of the $E_\infty$-global spaces $\bO/\infty$, $\bU/\infty$ and $\bSp/\infty$.
Curiously, this is a purely equivariant phenomenon: the underlying non-equivariant
spaces of  $\bO/\infty$, $\bU/\infty$ and $\bSp/\infty$ are contractible,
so from a non-equivariant perspective, this colimit splitting has no content.
Equivariantly and globally, however, $\bO/\infty$, $\bU/\infty$ and $\bSp/\infty$
are non-trivial and very interesting homotopy types.

\begin{rk}[The $E_\infty$-global space $\bO/\infty$]\label{rk:O/infty}
  To advertise the global spaces $\bO/\infty$, $\bU/\infty$ and $\bSp/\infty$,
  we recall some background on how these measure the difference between
  specific global refinements of the spaces $B O$, $B U$ and $B S p$.
  Since these remarks are mostly motivational, we don't give complete details and
  we restrict to the real case $\bO/\infty$;
  analogous remarks apply to $\bU/\infty$ and $\bSp/\infty$, with additional Galois equivariance.

  Postcomposition with the embedding $i:\mR^m\to\mR^{m+1}$ by $i(x)=(x,0)$ is a closed embedding
  \[ (V\oplus i)\circ \ - \ : \ \bL(V,V\oplus\mR^m)\ \to \ \bL(V,V\oplus\mR^{m+1})\ .  \]
  In the colimit over $m$ we obtain the infinite Stiefel manifold $\bL(V,V\oplus\mR^\infty)$;
  the eigenspace filtration and the conjugation maps \eqref{eq:conjugate by varphi} make just as much sense
  in the limiting case. We can thus define an orthogonal space $\bO/\infty$ with values
  \[ (\bO/\infty)(V)\ = \ \bL(V,V\oplus\mR^\infty)\ , \]
  and an exhausting eigenspace filtration by orthogonal subspaces $F_k(\bO/\infty)$.
  Since the orthogonal space $\bO/\infty$ is objectwise contractible, its underlying
  non-equivariant homotopy type is boring. However, $\bO/\infty$ represents a very interesting
  and non-trivial global homotopy type, as we shall now explain.

  The classifying space $B O$
  of the infinite orthogonal group has two particularly interesting global refinements
  $\bbO$ and $\bBO$, see Examples 2.4.1 and 2.4.18 of \cite{schwede:global}.
  Their values at an inner product space $V$ are given by
  \[ \bbO(V)\ = \ Gr_{|V|}(V\oplus \mR^\infty)\text{\qquad and\qquad}
     \bBO(V)\ = \ Gr_{|V|}(V\oplus V\oplus \mR^\infty)\ ,
  \]
  the Grassmannians of $\dim(V)$-planes in
  $V\oplus \mR^\infty$ and in $V\oplus V\oplus \mR^\infty$, respectively.
  The structure maps are given by 
  \begin{alignat*}{3}
    \bbO(\varphi)(L)\ &= \ \ (\varphi\oplus \mR^\infty)(L)\ &&+ \ \ ((W-\varphi(V))\oplus 0) \text{\quad and}\\
    \bBO(\varphi)(L)\ &= \ (\varphi\oplus\varphi\oplus \mR^\infty)(L)\ &&+ \ ((W-\varphi(V))\oplus 0\oplus 0) \ .
  \end{alignat*}
  To be completely honest, the orthogonal space $\bBO$ of \cite{schwede:global} is not literally
  the one we use here, as it is built from Grassmannians in $V\oplus V$ as opposed
  to $V\oplus V\oplus\mR^\infty$;
  but the two are globally equivalent by \cite[Proposition 2.4.28]{schwede:global}.
  As we explain in detail in Section 2.4  of \cite{schwede:global}, the small
  difference in the definitions of $\bbO$ and $\bBO$ has a drastic effect on the global homotopy type.
  A morphism $i:\bbO\to\bBO$ is given at $V$ by the effect of the embedding
  \[ V\oplus \mR^\infty \ \to \ V\oplus V\oplus\mR^\infty\ , \quad (v,x)\ \longmapsto\ (0,v,x)\ ;
  \]
  the morphism $i:\bbO\to\bBO$ is a non-equivariant equivalence, but {\em not} a global equivalence.
  Each of $\bbO$ and $\bBO$ comes with an associated global Thom spectrum $\bmO$ and
  $\bMO$, and these represent equivariant bordism and stable equivariant bordism,
  respectively, see Sections 6.1 and 6.2 of \cite{schwede:global}.

  Now we connect the orthogonal space $\bO/\infty$ to the two global forms of $B O$.
  The maps
  \[ p(V)\ : \
    (\bO/\infty)(V)\ = \ \bL(V,V\oplus\mR^\infty)\ \to \ G r_{|V|}(V\oplus\mR^\infty)\ = \ \bbO(V)  \]
  that send a linear isometric embedding to its image form a morphism of 
  orthogonal spaces $p:\bO/\infty\to\bbO$.
  The composite $i\circ p:\bO/\infty\to \bBO$ is null-homotopic, as witnessed by the
  system of compatible homotopies
  \begin{align*}
     H \ &: \  [0,1]\times \bL(V,V\oplus\mR^\infty)\ \to \ G r_{|V|}(V\oplus V\oplus\mR^\infty) \\
    H(t,f)\ &= \ \text{image}\left( v\mapsto ( t v, \sqrt{1-t^2}\cdot f(v))\right)\ . 
  \end{align*}
  We write $P(\bBO)$ for the path object of $\bBO$,
  i.e., $P(\bBO)(V)$ is the space of paths in $\bBO(V)$
  that end in the subspace $V\oplus 0\oplus 0$.
  The null-homotopy $H$ is adjoint to a morphism
  \[ \tilde H \ : \ \bO/\infty \ \to \ P(\bBO) \]
  that participates in a commutative diagram of orthogonal spaces:
  \begin{equation}\begin{aligned}\label{eq:bO2BO}
   \xymatrix{
      \bO\ar[r]\ar[d] &  \bO/\infty \ar[r]^-{\tilde H} \ar[d]_p & P(\bBO) \ar[d] \\
      \ast \ar[r] & \bbO \ar[r]_i & \bBO }
  \end{aligned}
  \end{equation}
  We claim without proof that both squares  are globally homotopy cartesian.
  Since the path object $P(\bBO)$ is globally trivial,
  the right square expresses $\bO/\infty$ as the global homotopy fiber of the morphism $i:\bbO\to\bBO$.
  In this sense,  $\bO/\infty$ measures the difference between $\bbO$ and $\bBO$.

  The situation becomes even more interesting by the additional multiplicative structure present.
  Indeed, the orthogonal space
  $\bBO$ can be refined to a globally group-like ultra-commutative monoid,
  see \cite[Example 2.4.1]{schwede:global}.
  The orthogonal space $\bbO$ supports a global $E_\infty$-multiplication,
  implemented by an action of the linear isometries operad,
  compare \cite[Remark 2.4.25]{schwede:global}.
  By \cite[Proposition 2.4.29]{schwede:global},
  the $E_\infty$-structure on $\bbO$ is {\em not} group-like
  and cannot be refined to an ultra-commutative multiplication.
  In contrast to $\bO/m$ for finite $m$, the limiting object $\bO/\infty$ supports
  an $E_\infty$-structure in a very similar way as $\bbO$,
  and all the morphisms in the diagram \eqref{eq:bO2BO} are morphisms of $E_\infty$-orthogonal spaces.
\end{rk}

As we explain in the next construction, the identification of the strata and subquotients
of the eigenspace filtration of $\bL/m$ are compatible with increasing $m$,
and thus yield analogous identifications for the eigenspace filtration of $\bL/\infty$.

\begin{construction}
  The open embeddings \eqref{eq:Cayley}
  are compatible with increasing $m$ along the embedding $i:\mK^m\to\mK^{m+1}$,  $i(x)=(x,0)$.
  The same is true for the open embeddings \eqref{eq:Millers bundle iso},
  so the associated collapse maps participate in a commutative square of orthogonal spaces:
  \[ \xymatrix@C=15mm{
      F_k(\bL/m)\ar[r]^-\Psi \ar[d]_{F_k(\bL/i)} &
      (\bGr_k^\mK)^{\nu(k,m)\oplus\mathfrak{ad}(k)}\ar[d]^{(\bGr_k^\mK)^{\nu(k,i)\oplus\mathfrak{ad}(k)}}\\
      F_k(\bL/(m+1))\ar[r]_-\Psi & (\bGr_k^\mK)^{\nu(k,m+1)\oplus\mathfrak{ad}(k)} 
    } \]
  We now pass to colimits over $m$ in the vertical direction;
  we denote the colimit of the right vertical sequence by $(\bGr_k^\mK)^{\nu(k,\infty)\oplus\mathfrak{ad}(k)}$.
  Theorem \ref{thm:strata identification} then implies that
  the morphism of orthogonal $G(\mK)$-spaces
  $\Psi: F_k(\bL/\infty) \to (\bGr_k^\mK)^{\nu(k,\infty)\oplus \mathfrak{ad}(k) } $
  factors through an isomorphism
  \[  F_k(\bL/\infty) / F_{k-1}(\bL/\infty)\ \iso \ (\bGr_k^\mK)^{\nu(k,\infty)\oplus \mathfrak{ad}(k) }   \ .     \]
\end{construction}

Now we explain that also the stable splitting morphisms
$s_{k,m}:\Sigma^\infty (\bGr_k^\mK)^{\nu(k,m)\oplus\mathfrak{ad}(k)}\to \Sigma^\infty_+ F_k(\bL/m)$  
of the eigenspace filtrations are sufficiently compatible for varying $m$.

\begin{construction}
  The open embeddings \eqref{eq:split_embedding}
  are compatible with increasing $m$ along the embedding $i:\mK^m\to\mK^{m+1}$,
  $i(x)=(x,0)$.
  So the associated collapse maps \eqref{collapse_symplectic} form a commutative square:
  \[ \xymatrix@C=15mm{
      S^{\nu(k,m)\oplus\mathfrak{ad}(k)}\sm S^{\mathfrak{s a}(k)} \ar[r]^-{t_{k,m}}
      \ar[d]_{S^{\nu(k,i)\oplus\mathfrak{ad}(k)}\sm S^{\mathfrak{s a}(k)}} &
      \bL^\mK(\mK^k,\mK^{k+m})_+\sm S^{\mathfrak{s a}(k)}  \ar[d]^{\bL^\mK(\mK^k,\mK^k\oplus i)_+\sm S^{\mathfrak{s a}(k)}}\\
      S^{\nu(k,m+1)\oplus\mathfrak{ad}(k)}\sm S^{\mathfrak{s a}(k)} \ar[r]_-{t_{k,m+1}} & \bL^\mK(\mK^k,\mK^{k+m+1})_+\sm S^{\mathfrak{s a}(k)} 
    } \]
  The associated homotopy classes thus satisfy the relation
  \[ (\Sigma^\infty_+ F_k(\bL/i))_*\td{t_{k,m}}\ = \ (\nu(k,i)\oplus\mathfrak{ad}(k))^*\td{t_{k,m+1}} \]
  in the representation-graded equivariant homotopy group 
  \[ \pi_{\nu(k,m)\oplus\mathfrak{ad}(k)}^{\tilde I(k)}\left( \epsilon_k^*( \Sigma^\infty_+ F_k(\bL/(m+1)))\right) \ . \]
  Here $(\nu(k,i)\oplus\mathfrak{ad}(k))^*$ is the grading-changing homomorphism
  given by precomposition with
  $S^{\nu(k,i)\oplus\mathfrak{ad}(k)}:S^{\nu(k,m)\oplus\mathfrak{ad}(k)}\to S^{\nu(k,m+1)\oplus\mathfrak{ad}(k)}$.
  The representability isomorphism of Corollary \ref{cor:J_is_A-global} turns this relation
  into a commutative square in the $G(\mK)$-global homotopy category:
  \[ \xymatrix@C=15mm{
      \Sigma^\infty (\bGr_k^\mK)^{\nu(k,m)\oplus\mathfrak{ad}(k)}\ar[r]^-{s_{k,m}}
      \ar[d]_{(\bGr_k^\mK)^{\nu(k,i)\oplus\mathfrak{ad}(k)}} &
      \Sigma^\infty_+ F_k(\bL/m)\ar[d]^{\Sigma^\infty_+ F_k(\bL/i)} \\
      \Sigma^\infty (\bGr_k^\mK)^{\nu(k,m+1)\oplus\mathfrak{ad}(k)}\ar[r]_-{s_{k,m+1}} &\Sigma^\infty_+ F_k(\bL/(m+1))
    } \]
  Now we pass to homotopy colimits, in the sense of triangulated categories,
  in the vertical direction; since the vertical morphisms
  in the previous square are represented by actual morphisms of orthogonal $G(\mK)$-spaces
  that are objectwise closed embeddings, the vertical homotopy colimits are modeled
  by the suspension spectra of the corresponding colimits of orthogonal $G(\mK)$-spaces.
  So there exists a morphism in  the $G(\mK)$-global stable homotopy category
  \[  s_{k,\infty}\ : \ \Sigma^\infty (\bGr_k^\mK)^{\nu(k,\infty)\oplus\mathfrak{ad}(k)}\ \to \
    \Sigma^\infty_+ F_k(\bL/\infty)  \]
  such that all the squares
  \[ \xymatrix@C=15mm{
      \Sigma^\infty (\bGr_k^\mK)^{\nu(k,m)\oplus\mathfrak{ad}(k)}\ar[r]^-{s_{k,m}}
      \ar[d] &      \Sigma^\infty_+ F_k(\bL/m)\ar[d]\\
      \Sigma^\infty (\bGr_k^\mK)^{\nu(k,\infty)\oplus\mathfrak{ad}(k)}\ar[r]_-{s_{k,\infty}} &\Sigma^\infty_+ F_k(\bL/\infty)
    } \]
  commute in $\GH_{G(\mK)}$.
  Sequential homotopy colimits in triangulated categories are only weak colimits,
  so the morphism $s_{k,\infty}$ is only determined by this property
  up a potential ambiguity measured by a $\lim^1$-term.
  We write
  \[ s_{k,\infty}^\mK\ : \ \Sigma^\infty (\bGr_k^\mK)^{\nu(k,\infty)\oplus\mathfrak{ad}(k)}\ \to \
    \Sigma^\infty_+ \bL/\infty   \]
  for the composite of the morphism $s_{k,\infty}$ 
  and the morphism of suspension spectra induced by the inclusion $F_k(\bL/\infty)\to\bL/\infty$.
\end{construction}

\begin{theorem}\label{thm:stably split L/infty}
  Let $\mK$ be one of the skew-fields $\mR$, $\mC$ or $\mH$.
  The morphism
  \[ \sum s_{k,\infty}^\mK \ :\ \bigvee_{k\geq 0}\  \Sigma^\infty (\bGr_k^\mK)^{\nu(k,\infty)\oplus \mathfrak{ad}(k)}
    \ \to \  \Sigma^\infty_+ \bL/\infty \]
  is an isomorphism in the $G(\mK)$-global stable homotopy category.  
\end{theorem}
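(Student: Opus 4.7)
The plan is to deduce the theorem from the finite-$m$ splittings of Theorem \ref{thm:stably split L/m} by passage to sequential colimits over $m$, in direct analogy with how the proof of Theorem \ref{thm:stably split L/m} itself passes from the finite stages $F_n(\bL/m)$ to $\bL/m$.

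First I would verify that on both sides, the pointset colimits compute the sequential homotopy colimits in $\GH_{G(\mK)}$. The stabilization maps $\bL/m\to\bL/(m+1)$ and $(\bGr_k^\mK)^{\nu(k,m)\oplus\mathfrak{ad}(k)}\to(\bGr_k^\mK)^{\nu(k,m+1)\oplus\mathfrak{ad}(k)}$ induced by the embedding $\mK^m\to\mK^{m+1}$ are objectwise closed embeddings of orthogonal $G(\mK)$-spaces, so after applying $\Sigma^\infty_+$ or $\Sigma^\infty$ they become h-cofibrations of orthogonal $G(\mK)$-spectra. Hence $\Sigma^\infty_+\bL/\infty$ is the sequential homotopy colimit of $\{\Sigma^\infty_+ \bL/m\}$ in the triangulated category $\GH_{G(\mK)}$, and analogously on the source side, where one additionally uses that countable wedges commute with sequential homotopy colimits in a triangulated category with countable sums (both operations being defined by the same cofiber construction).

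Next I would pass to equivariant homotopy groups. For each compact Lie group $G$ and continuous homomorphism $\alpha:G\to G(\mK)$, the (possibly representation-graded) homotopy group $\pi^G_\alpha$ commutes with sequential colimits along h-cofibrations of orthogonal $G(\mK)$-spectra. The compatibility squares established in the preceding Construction then show that $\pi^G_\alpha(\sum s_{k,\infty}^\mK)$ is the colimit, over $m$, of the homomorphisms $\pi^G_\alpha(\sum s_{k,m}^\mK)$. By Theorem \ref{thm:stably split L/m} each $\sum s_{k,m}^\mK$ is an isomorphism in $\GH_{G(\mK)}$, so each of these maps on homotopy groups is an isomorphism, and a filtered colimit of isomorphisms is an isomorphism. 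Since a morphism in $\GH_{G(\mK)}$ is an isomorphism if and only if it induces isomorphisms on $\pi^G_\alpha$ for all such pairs $(G,\alpha)$, this yields the claim.

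The main subtlety, and the only genuine obstacle, is the $\lim^1$-ambiguity in the definition of $s_{k,\infty}^\mK$ explicitly noted in the construction: because sequential homotopy colimits in triangulated categories are only weak colimits, the morphism $s_{k,\infty}^\mK$ is not uniquely pinned down by the compatibility squares. This ambiguity is harmless here, however, since any choice of $s_{k,\infty}^\mK$ compatible with the squares induces on $\pi^G_\alpha$ the filtered colimit of the finite-$m$ maps, and the property of being an isomorphism on all such $\pi^G_\alpha$ is insensitive to the $\lim^1$-ambiguity.
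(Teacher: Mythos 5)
Your proof is correct and matches the paper's argument: both reduce to checking that the morphism induces isomorphisms on $\pi_*^G(\alpha^*(-))$ for all $\alpha:G\to G(\mK)$, using that these homotopy groups commute with sequential colimits along h-cofibrations and with wedges, and then invoke Theorem \ref{thm:stably split L/m}. Your explicit note that the $\lim^1$-ambiguity in $s_{k,\infty}^\mK$ is harmless is a welcome addition; the paper leaves this implicit, relying on the fact that the comparison square on homotopy groups commutes for any choice of $s_{k,\infty}^\mK$ satisfying the weak-colimit property.
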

\begin{proof}
  We let $\alpha:G\to G(\mK)$ be any continuous homomorphism from a compact Lie group.
  Both vertical maps in the commutative square of graded abelian groups
  \[ \xymatrix@C=25mm{
     \bigoplus_{k\geq 0}
     \colim_m \ \pi_*^G(\alpha^*(\Sigma^\infty (\bGr_k^\mK)^{\nu(k,m)\oplus\mathfrak{ad}(k)}))
      \ar[r]^-{\sum_k\colim_m(s^\mK_{k,m})_*}
      \ar[d] &  \colim_m\ \pi_*^G(\alpha^*( \Sigma^\infty_+ \bL/m))\ar[d]\\
       \bigoplus_{k\geq 0}\ \pi_*^G(\alpha^*(\Sigma^\infty (\bGr_k^\mK)^{\nu(k,\infty)\oplus\mathfrak{ad}(k)}))
      \ar[r]_-{\sum_k(s^\mK_{k,\infty})_*} & \pi_*^G(\alpha^*(\Sigma^\infty_+ \bL/\infty))
    } \]
  are isomorphisms, and the upper horizontal map is an isomorphism by Theorem \ref{thm:stably split L/m}.
  So the lower vertical map is an isomorphism. Since equivariant homotopy groups
  takes wedges to direct sums, this proves the claim.
\end{proof}

\begin{appendix}

\section{A glimpse of \texorpdfstring{$C$}{C}-global homotopy theory}
\label{sec:C-global}

In this section we give a brief introduction to $C$-global homotopy theory,
where $C$ is a topological group.
In this refinement of global homotopy theory,
all objects are equipped with an external action of $C$.
For the application to the global stable splitting of Stiefel manifolds we are mostly interested
in the special case where $C$ is
the Galois group $G(\mC)$ of $\mC$ over $\mR$ (a discrete group of order 2),
or where $C$ is the group $G(\mH)$ of $\mR$-algebra automorphism of the quaternions
(a compact Lie group isomorphic to $S O(3)$).
However, the basic theory works just as well over arbitrary topological groups,
so we develop it in that generality.

The philosophy behind $C$-global homotopy theory
is to merge the `global' direction of \cite{schwede:global}
with the `proper stable homotopy theory' in the spirit of \cite{dhlps}.
So while we allow arbitrary topological groups to act, all homotopical information
is probed by restriction along continuous homomorphisms $\alpha:G\to C$
whose source $G$ is a {\em compact Lie group}.
We will introduce unstable and stable $C$-global homotopy theory via particular models.
For the unstable theory, we will use orthogonal $C$-spaces;
for the stable theory, we will use orthogonal $C$-spectra. 

In order to keep the length of this paper within a reasonable bound,
we introduce just enough formalism to be able to formulate and prove 
the $G(\mC)$-global stable splitting of $\bU/m$ and
the $G(\mH)$-global stable splitting of $\bSp/m$ in Theorem \ref{thm:stably split L/m}.
To this end we set up the triangulated $C$-global stable homotopy category $\GH_C$,
identify global Thom spaces over global classifying spaces as representing objects
for equivariant homotopy groups (see Theorem \ref{thm:Bgl alpha represents}),
and prove that $\GH_C$ is compactly generated (see Corollary \ref{cor:compactly generated}).
The global stable splitting of $\bO/m$ does not involve any extrinsic group actions,
so it can be formulated entirely in the framework of \cite{schwede:global}
and does not need the tools from this appendix.

In the special case of discrete groups $C$,
and when probing through homomorphisms from finite groups (as opposed to compact Lie groups),
Lenz \cite{lenz:G-global} introduces several models for
unstable and stable $C$-global homotopy theory;
among these are models based on $I$-spaces and symmetric spectra,
the discrete analogs of orthogonal spaces and orthogonal spectra.
In his context, Lenz develops substantially more theory
and provides many more tools than we do here.
In the special case of compact Lie groups $C$,
Barrero \cite[Theorem A.20]{barrero:operads}
exhibits the $C$-global equivalences as the weak equivalence in the
{\em $C$-global model structure} on the category of orthogonal $C$-spaces.

\subsection{Unstable \texorpdfstring{$C$}{C}-global homotopy theory}
We start with an introduction to unstable $C$-global homotopy theory,
where $C$ is any topological group.
Without the external $C$-action, there are several models for unstable global homotopy theory available;
historically, the first model were the orbispaces of Gepner-Henriques \cite{gepner-henriques}
that model a suitable homotopy theory of topological stacks.
More recently, the author developed models in terms
of orthogonal spaces \cite[Section 1]{schwede:global}
and spaces with an action of the `universal compact Lie group' \cite{schwede:universal_Lie}.
We will generalize the orthogonal space model now,
and work with orthogonal $C$-spaces,
i.e., orthogonal spaces equipped with a continuous $C$-action;
the relevant homotopy theory is encoded in
the class of {\em $C$-global equivalences}, see Definition \ref{def:unstable C-global equivalence}.

\medskip

For the purposes of this paper, a {\em space} is a compactly generated space in the
sense of \cite{mccord}, i.e., a $k$-space (also called Kelley space) that
satisfies the weak Hausdorff condition.
We write $\bT$ for the category of compactly generated spaces and continuous maps.
A {\em topological group} is a group object internal to
the category $\bT$ of compactly generated spaces.
A {\em $C$-space} is then a $C$-object internal to $\bT$, i.e., a compactly generated space $X$ 
equipped with an associative and unital action 
$C  \times X \to  X$
that is continuous with respect to the compactly generated product topology.
We write $C\bT$ for the category of $C$-spaces and continuous $C$-maps.

As in \cite{schwede:global}, we denote by $\bL$ the category with objects the 
finite-dimensional euclidean inner product spaces
and morphisms the $\mR$-linear isometric embeddings.
The morphism spaces of the category $\bL$ come with a preferred topology
as Stiefel manifolds; this makes $\bL$ into a topological category.

\begin{defn}
Let $C$ be a topological group. An {\em orthogonal $C$-space} is a continuous functor from the
linear isometries category $\bL$ to the category of $C$-spaces.
A {\em morphism} of orthogonal $C$-spaces is a natural transformation of functors.
We denote the category of orthogonal $C$-spaces by $\spc_C$.
\end{defn}

The use of continuous functors from the category $\bL$ to spaces has a long history
in homotopy theory. 
The category $\bL$ (or its extension that
also contains countably infinite-dimensional inner product spaces)
is denoted $\mathscr I$ by Boardman and Vogt \cite{boardman-vogt:homotopy everything},
and this notation is also used in \cite{may-quinn-ray};
other sources \cite{lind:diagram} use the symbol $\mathcal I$.
Accordingly, orthogonal spaces are sometimes referred to as $\mathscr I$-functors,
$\mathscr I$-spaces or $\mathcal I$-spaces.

\medskip

Orthogonal $C$-spaces admit a refinement of global homotopy theory
that takes the $C$-action into account, and that generalizes the unstable global homotopy theory
as developed in \cite[Chapter 1]{schwede:global}.
The additional homotopical information is located at compact Lie groups `augmented over $C$',
i.e., compact Lie groups $G$ equipped with a continuous homomorphism $\alpha:G\to C$.

For us, a {\em representation} of a compact Lie group $G$ is an inner product space $V$
equipped with a continuous $G$-action through linear isometries.
Such an action can also be packaged as a continuous homomorphism $\rho:G\to  O(V)$
to the orthogonal group of $V$.
For every orthogonal $C$-space $Y$, every continuous homomorphism $\alpha:G\to C$ and
$G$-representation $V$, the value $Y(V)$
comes with a continuous $(G\times C)$-action
from the $G$-action on $V$ and the $C$-action on $Y$.
For a $G$-equivariant linear isometric embedding $\varphi:V\to W$,
the induced map $Y(\varphi):Y(V)\to Y(W)$ is $(G\times C)$-equivariant.
We write
\[  \Gamma(\alpha)\ =\ \{(g,\alpha(g))\ |\  g\in G\} \]
for the graph of $\alpha$, a closed subgroup of $G\times C$.
We denote by
\[ D^k \ = \ \{x\in\mR^k \ : \ |x|\leq 1\} \text{\qquad and\qquad}
 \partial D^k \ = \ \{x\in\mR^k \ : \ |x|= 1\} \]
the unit disc in $\mR^k$ and its boundary, a sphere of dimension $k-1$,
respectively.
In particular, $D^0=\{0\}$ is a one-point space and $\partial D^0=\emptyset$
is empty.

\begin{defn}\label{def:unstable C-global equivalence}
  Let $C$ be a topological group.
  A morphism $f:X\to Y$ of orthogonal $C$-spaces is a {\em $C$-global equivalence}
  if the following condition holds:
  for every compact Lie group $G$,
  every continuous homomorphism $\alpha:G\to C$,
  every $G$-representation $V$,
  every $k\geq 0$ and all continuous maps $a:\partial D^k\to X(V)^{\Gamma(\alpha)}$
  and $b:D^k\to Y(V)^{\Gamma(\alpha)}$ such that $b|_{\partial D^k}=f(V)^{\Gamma(\alpha)}\circ a$,
  there is a $G$-representation $W$,
  a $G$-equivariant linear isometric embedding $\varphi:V\to W$
  and a continuous map $\lambda:D^k\to X(W)^{\Gamma(\alpha)}$
  such that $\lambda|_{\partial D^k}=X(\varphi)^{\Gamma(\alpha)}\circ a$ and
  such that $f(W)^{\Gamma(\alpha)}\circ \lambda$
  is homotopic, relative to $\partial D^k$, to $Y(\varphi)^{\Gamma(\alpha)}\circ b$.
\end{defn}

If the group $C$ is trivial, then the notion of $C$-global equivalence specializes
to the global equivalences of \cite[Definition 1.1.2]{schwede:global}.
We recall from \cite[Definition 1.1.16]{schwede:global} that an orthogonal space $Y$ is {\em closed}
if it takes every linear isometric embedding $\varphi:V\to W$ of inner product spaces
to a closed embedding $Y(\varphi):Y(V)\to Y(W)$.
Most orthogonal spaces that occur naturally are closed,
and for morphisms between closed orthogonal $C$-spaces,
the next proposition  provides a useful criterion for
detecting $C$-global equivalences.

We let $G$ be a compact Lie group. We recall that a {\em complete $G$-universe}
is an orthogonal $G$-representation of countably infinite dimension into which
every finite-dimensional $G$-representation embeds,
by an equivariant $\mR$-linear isometric embedding.
Complete $G$-universe exists, they are unique up to equivariant linear
isomorphism, and the space of equivariant linear isometric self-embeddings
of a complete $G$-universe is contractible.
If $H$ is a closed subgroup of a compact Lie group $G$,
then the underlying $H$-representation of a complete
$G$-universe is a complete $H$-universe.
In the following, for every compact Lie group $G$ we fix
a complete $G$-universe $\Uc_G$.
We let $s(\Uc_G)$ denote the poset, under inclusion,
of finite-dimensional $G$-subrepresentations of $\Uc_G$.
The {\em underlying $G$-space} of an orthogonal space $Y$ is
\[ Y(\Uc_G)\ = \ \colim_{V\in s(\Uc_G)}\, Y(V)\ , \]
the colimit of the $G$-spaces $Y(V)$. 
If $Y$ is an orthogonal $C$-space for some topological group $C$,
then $Y(\Uc_G)$ becomes a $(G\times C)$-space.
The next proposition is a generalization of \cite[Proposition 1.1.17]{schwede:global};
the proof is almost verbatim the same, and we omit it. 

\begin{prop}\label{prop:global eq for closed} 
  Let $C$ be a topological group.
  Let $f:X\to Y$ be a morphism between orthogonal $C$-spaces
  whose underlying orthogonal spaces are closed.
  Then $f$ is a $C$-global equivalence if and only if
  for every continuous homomorphism  $\alpha:G\to C$ from a compact Lie group, the map
  \[ f(\Uc_G)^{\Gamma(\alpha)}\ : \ X(\Uc_G)^{\Gamma(\alpha)} \ \to \  Y(\Uc_G)^{\Gamma(\alpha)}\]
  is a weak equivalence.
\end{prop}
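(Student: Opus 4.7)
The plan is to reduce the proposition to the untwisted case treated in \cite[Proposition 1.1.17]{schwede:global}, following the same strategy but with the subgroup $\Gamma(\alpha) \subseteq G \times C$ playing the role that $G$ itself plays there. The whole argument hinges on being able to pass between finite-dimensional $G$-representations $V$ (which appear in the definition of $C$-global equivalence) and the complete $G$-universe $\Uc_G$ (which appears in the fixed-point condition), so I would begin by recording the relevant compatibility.

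First I would establish the key colimit formula: for any closed orthogonal $C$-space $X$ and any continuous homomorphism $\alpha:G \to C$, the filtration $\{X(V)\}_{V \in s(\Uc_G)}$ consists of closed $(G \times C)$-equivariant embeddings, and hence taking $\Gamma(\alpha)$-fixed points commutes with the sequential colimit, yielding
\[
X(\Uc_G)^{\Gamma(\alpha)} \ = \ \colim_{V \in s(\Uc_G)} X(V)^{\Gamma(\alpha)} \ ,
\]
with transition maps that are again closed embeddings. This is the only place where the closedness hypothesis is used. From it one immediately deduces that any continuous map from a compact space into $X(\Uc_G)^{\Gamma(\alpha)}$ factors through $X(V)^{\Gamma(\alpha)}$ for some $V \in s(\Uc_G)$, and similarly for maps out of compact pairs together with compact homotopies.

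For the forward direction I would assume $f$ is a $C$-global equivalence and verify the Whitehead-type criterion on $f(\Uc_G)^{\Gamma(\alpha)}$. Given compatible maps $a:\partial D^k \to X(\Uc_G)^{\Gamma(\alpha)}$ and $b:D^k \to Y(\Uc_G)^{\Gamma(\alpha)}$, compactness lets me assume they factor through $X(V)^{\Gamma(\alpha)}$ and $Y(V)^{\Gamma(\alpha)}$ for a common $V \in s(\Uc_G)$. The defining property of a $C$-global equivalence supplies a $G$-equivariant isometric embedding $\varphi:V \to W$ and a filler $\lambda:D^k \to X(W)^{\Gamma(\alpha)}$ together with a homotopy rel $\partial D^k$ between $f(W)^{\Gamma(\alpha)} \lambda$ and $Y(\varphi)^{\Gamma(\alpha)} b$. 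Since $\Uc_G$ is complete, $W$ embeds $G$-equivariantly into $\Uc_G$; pushing $\lambda$ and the homotopy into $X(\Uc_G)^{\Gamma(\alpha)}$ and $Y(\Uc_G)^{\Gamma(\alpha)}$ along such an embedding gives the desired lift up to homotopy, establishing that $f(\Uc_G)^{\Gamma(\alpha)}$ is a weak equivalence.

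For the converse I would reverse this bookkeeping. Given data $a:\partial D^k \to X(V)^{\Gamma(\alpha)}$ and $b:D^k \to Y(V)^{\Gamma(\alpha)}$ with $b|_{\partial D^k} = f(V)^{\Gamma(\alpha)} a$, pick any $G$-equivariant isometric embedding $V \hookrightarrow \Uc_G$ and use the hypothesis that $f(\Uc_G)^{\Gamma(\alpha)}$ is a weak equivalence to produce a lift up to homotopy rel $\partial D^k$ inside the universe. Compactness of $D^k$ and $[0,1] \times D^k$, combined with the colimit formula above, forces both the lift and the connecting homotopy to be supported on some finite-dimensional $W \in s(\Uc_G)$ containing $V$; the inclusion $V \hookrightarrow W$ is then the desired $\varphi$, and the restricted lift is the desired $\lambda$. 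The main obstacle is purely the interchange of $\Gamma(\alpha)$-fixed points with the sequential colimit, which is exactly what closedness guarantees; once that lemma is in hand, the rest is a formal compactness argument that proceeds verbatim as in \cite[Proposition 1.1.17]{schwede:global}.
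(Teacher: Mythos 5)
Your reconstruction follows the same route the paper intends: the paper omits the proof, stating that it is essentially the argument of \cite[Proposition 1.1.17]{schwede:global}, and that argument is exactly what you outline — the closedness hypothesis yields the interchange $X(\Uc_G)^{\Gamma(\alpha)} = \colim_{V\in s(\Uc_G)} X(V)^{\Gamma(\alpha)}$ (using that $\Gamma(\alpha)$ is a compact group and that the transition maps in the filtered system are closed embeddings, so one may pass to a cofinal sequence), and a compactness argument then shuttles the Whitehead lifting data between finite level and the full universe.

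One point you gloss over in the forward direction is worth spelling out. After invoking the definition of $C$-global equivalence to obtain $\varphi\colon V\to W$ and the filler $\lambda\colon D^k\to X(W)^{\Gamma(\alpha)}$, you write that ``$W$ embeds $G$-equivariantly into $\Uc_G$'' and push forward. An arbitrary such embedding $\psi\colon W\hookrightarrow\Uc_G$ is not enough: you need $\psi\circ\varphi$ to agree with the inclusion $\iota\colon V\hookrightarrow\Uc_G$ you started from, or else $X(\psi)^{\Gamma(\alpha)}\circ\lambda$ restricted to $\partial D^k$ equals $X(\psi\varphi)^{\Gamma(\alpha)}(a)$, which need not coincide with $a = X(\iota)^{\Gamma(\alpha)}(a)$ on the nose. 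The fix is standard but should be stated: the orthogonal complement $\iota(V)^\perp$ in $\Uc_G$ is again a complete $G$-universe, so the finite-dimensional complement $\varphi(V)^\perp\subseteq W$ embeds $G$-equivariantly into $\iota(V)^\perp$, giving an extension $\psi$ of $\iota$ along $\varphi$. With this detail supplied, your proof is correct and matches the reference's approach.
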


The basic building blocks of `classical' global homotopy theory
(i.e., global homotopy theory without an external action
of any additional group) are the global classifying spaces of compact Lie groups.
For example, these global classifying spaces generate the $\infty$-category
of global spaces under colimits, and their suspension spectra
generated the global stable homotopy category
as a triangulated category \cite[Theorem 4.4.3]{schwede:global}.
In the orthogonal space model, the global classifying space $B_{\gl}G$ of
a compact Lie group is  introduced in \cite[Definition 1.1.27]{schwede:global};
the name is motivated by the fact that $B_{\gl}G$ globally classifies $G$-equivariant principal bundles,
compare \cite[Proposition 1.1.30]{schwede:global}.
The counterpart of $B_{\gl}G$ in the world of topological stacks
is thus the stack of principal $G$-bundles.

Global classifying spaces also exist in $C$-global homotopy theory,
and we will introduce them now.
In the $C$-global context, these object are not associated to compact Lie groups,
but rather to continuous homomorphisms $\alpha:G\to C$ from compact Lie groups.
As in the `classical' case, their global classifying spaces $B_{\gl}\alpha$
are the basic building blocks of $C$-global homotopy theory.
A rigorous statement to this effect is our Theorem \ref{thm:Bgl alpha represents}
below, saying that $C$-global stable homotopy category is compactly generated
by the suspension spectra of the global classifying spaces
of all continuous homomorphisms from compact Lie groups to $C$.

\begin{construction}[Global classifying spaces]\label{con:Bgl of alpha}
  We let $C$ be a topological group, $G$ a compact Lie group, and $\alpha:G\to C$
  a continuous homomorphism.
  We let $V$ be a faithful $G$-representation.
  Then the assignment
  \[ B_{\gl}\alpha \ = \ C\times_\alpha \bL(V,-)\ : \ \bL \ \to \ C\bT \]
  is an orthogonal $C$-space, the {\em global classifying space} of $\alpha$.
  The value of $B_{\gl}\alpha$ at an inner product space $W$ is thus the
  orbit $C$-space of the $G$-action on $C\times \bL(V,W)$ by
  \[ (c,\varphi)\cdot g \ = \ (c\alpha(g), \varphi\circ l_g)\ , \]
  where $l_g:V\to V$ is left multiplication by the group element $g$.
\end{construction}

When the group $C$ is trivial, $B_{\gl}\alpha$ specializes to
a global classifying space $B_{\gl}G$ for the compact Lie group $G$ as defined in
\cite[Definition 1.1.27]{schwede:global}.
The following Proposition \ref{prop:Bgl independence}
in particular shows that the $C$-global homotopy type of $B_{\gl}\alpha$
is independent of the choice of faithful $G$-representation;
the proposition generalizes \cite[Proposition 1.1.26]{schwede:global}.
Given two inner product spaces $V$ and $W$,
the restriction homomorphism of represented orthogonal spaces
\[ \rho_{V,W}\ : \ \bL(V\oplus W,-)\ \to \  \bL(V,-) \]
restricts a linear isometric embedding from $V\oplus W$ to the first summand.

\begin{prop}\label{prop:Bgl independence}
  Let $\alpha:G\to C$ be a continuous homomorphism
  from a compact Lie group to a topological group.
  Let $V$ and $W$ be $G$-representations such that $G$ acts faithfully on $V$, and let $A$ be a $G$-space.
  Then the restriction morphism
  \[ C\times_\alpha(\rho_{V,W}\times A)\ : \ C\times_\alpha (\bL(V\oplus W,-)\times A)\ \to
    C\times_\alpha (\bL(V,-)\times A)\]
  is a $C$-global equivalence.
\end{prop}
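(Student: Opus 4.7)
\medskip
\noindent\textbf{Proof proposal.}
My plan is to apply Proposition \ref{prop:global eq for closed}. Both orthogonal $C$-spaces in question are closed: the represented orthogonal space $\bL(V,-)$ sends linear isometric embeddings to closed embeddings, and closedness is preserved under the product with $A$ and under the quotient $C\times_\alpha(-)$ by the compact group $G$. It will then suffice to show that for every continuous homomorphism $\alpha':G'\to C$ from a compact Lie group $G'$, the induced map on $\Gamma(\alpha')$-fixed points of the value at the complete $G'$-universe $\Uc_{G'}$ is a weak equivalence of spaces.

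The key is an analysis of the $\Gamma(\alpha')$-fixed points of $C\times_\alpha(\bL(V,\Uc_{G'})\times A)$. Unpacking the definitions, a class $[c,\varphi,a]$ is fixed by $\Gamma(\alpha')$ if and only if for every $g'\in G'$ there exists $g\in G$ with $\alpha'(g')c=c\alpha(g)$, $l_{g'}\circ\varphi=\varphi\circ l_g$, and $g\cdot a=a$. Since $V$ is a faithful $G$-representation and $\varphi$ is injective, such a $g$ is uniquely determined by $g'$; this yields a continuous homomorphism $h=h_{\varphi}:G'\to G$ such that $\varphi$ is $h$-equivariant, $c\alpha(h(g'))c^{-1}=\alpha'(g')$ for all $g'$, and $a\in A^{h(G')}$. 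Since conjugacy classes of continuous homomorphisms between compact Lie groups are open in the representation variety, the assignment $[c,\varphi,a]\mapsto [h]$ is locally constant, so the fixed-point space decomposes as a disjoint union over $G$-conjugacy classes of continuous homomorphisms $h:G'\to G$ for which the transporter $\{c\in C\,:\,c\alpha(h(g'))c^{-1}=\alpha'(g')\}$ is non-empty. The restriction morphism respects this decomposition, so it suffices to treat one summand at a time.

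For a fixed such $h$, the $h$-stratum of the target fixed-point space is the orbit space, under the free $Z_G(h(G'))$-action given by $k\cdot(c,\varphi,a)=(c\alpha(k),\varphi\circ l_k,k^{-1}a)$, of
\[ T_h\times \bL^{G',h}(V,\Uc_{G'})\times A^{h(G')}, \]
where $T_h\subseteq C$ is the above transporter; the analogous description holds for the source with $V$ replaced by $V\oplus W$. Freeness of the $Z_G(h(G'))$-action on the embedding factor follows again from the faithfulness of $V$. The restriction morphism on the stratum is then induced by the $Z_G(h(G'))$-equivariant map
\[ \rho\ :\ \bL^{G',h}(V\oplus W,\Uc_{G'})\ \to\ \bL^{G',h}(V,\Uc_{G'})\ . \]
The fiber of $\rho$ over $\varphi$ is $\bL^{G',h}(W,\Uc_{G'}-\varphi(V))$, and $\Uc_{G'}-\varphi(V)$ is still a complete $G'$-universe (since $V$ is finite-dimensional), so this fiber is contractible. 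Hence $\rho$ is a weak equivalence (and can be seen to be a locally trivial bundle); multiplying by the unchanged factors $T_h$ and $A^{h(G')}$ and passing to the quotient by the free action of the compact Lie group $Z_G(h(G'))$ preserves this weak equivalence. Summing over conjugacy classes of $h$ yields the required weak equivalence on $\Gamma(\alpha')$-fixed points.

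The main technical obstacle I anticipate is the bookkeeping in the fixed-point decomposition: verifying that $h$ depends continuously on $\varphi$, that the decomposition is a genuine topological disjoint union, and that the $Z_G(h(G'))$-actions are free and have local sections so that the quotient step is compatible with weak equivalences. Once this is in place, the contractibility of $\bL^{G',h}(W,\Uc_{G'}-\varphi(V))$ is a standard consequence of completeness of the universe.
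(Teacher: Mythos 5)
Your proof is along the right lines but takes a substantially more laborious route than the paper, and it re-derives (in a more complicated setting) a fact that the paper simply cites. The paper's proof is much shorter: after establishing closedness as you do, it invokes \cite[Proposition 1.1.26]{schwede:global}, which states that $\rho_{V,W}(\Uc_K):\bL(V\oplus W,\Uc_K)\to\bL(V,\Uc_K)$ is a $(K\times G)$-equivariant \emph{homotopy equivalence}, not merely a weak equivalence. Applying the functor $C\times_\alpha(-\times A)$ preserves equivariant homotopy equivalences, yielding a $(K\times C)$-homotopy equivalence $C\times_\alpha(\rho_{V,W}(\Uc_K)\times A)$, which therefore restricts to a weak equivalence on $\Gamma(\beta)$-fixed points for every $\beta:K\to C$. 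No stratification of the fixed points is needed.

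Your approach instead unwinds the $\Gamma(\alpha')$-fixed points by hand, stratifying by conjugacy classes of homomorphisms $h:G'\to G$ and then arguing fiberwise. This is essentially re-proving the key ingredient of \cite[Proposition 1.1.26]{schwede:global} (whose proof is itself a fibration-with-contractible-fibers argument) but embedded in the more complicated mixed-group context with the extra $C$- and $A$-factors and the quotient by $Z_G(h(G'))$. The technical obstacles you flag — local constancy of $h$, the disjoint-union decomposition, freeness and local sections for the $Z_G(h(G'))$-action — are real and would all need to be addressed; they are bypassed by using the \emph{equivariant homotopy equivalence} formulation, since a $(K\times C)$-homotopy equivalence induces honest homotopy equivalences on all fixed-point spaces with no need to identify those spaces. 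If you wanted to complete your argument you would also need to verify carefully that $\rho$ restricted to the $h$-fixed Stiefel manifolds is a fibration (so that contractible fibers imply a weak equivalence) and that passing to the quotient by the free compact Lie group action preserves weak equivalences; both are true but require justification. The takeaway: look for an equivariant strengthening of the non-equivariant statement you are trying to prove on fixed points — it often makes the fixed-point analysis unnecessary.
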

\begin{proof}
  We claim that the underlying orthogonal space of $C\times_\alpha(\bL(V,-)\times A)$ is closed;
  the same is then also true for $C\times_\alpha(\bL(V\oplus W,-)\times A)$.
  Indeed, if $\varphi:U\to U'$
  is a linear isometric embedding, then the induced map of Stiefel manifolds
  $\bL(V,\varphi):\bL(V,U)\to\bL(V,U')$ is a closed embedding, hence so
  is the map $C\times \bL(V,\varphi)\times A$.
  Taking orbits by a continuous action of a compact topological group
  preserves closed embeddings, see \cite[Proposition B.13 (iii)]{schwede:global},
  so the map $C\times_\alpha(\bL(V,\varphi)\times A)$ is a closed embedding, too.

  Now we let $K$ be another compact Lie group, and we let $\Uc_K$ a complete $K$-universe.
  The map
  \[ \rho_{V,W}(\Uc_K)\ : \  \bL(V\oplus W,\Uc_K)\ \to \  \bL(V,\Uc_K)\]
  is a $(K\times G)$-homotopy equivalence by \cite[Proposition 1.1.26]{schwede:global}.
  So the map
  \[ C\times_\alpha (\rho_{V,W}(\Uc_K)\times A)\ : \
    C\times_\alpha (\bL(V\oplus W,\Uc_K)\times A)\ \to \
    C\times_\alpha (\bL(V,\Uc_K)\times A) \]
  is a $(K\times C)$-homotopy equivalence.
  Hence for every continuous homomorphism $\beta:K\to C$,
  the map of $\Gamma(\beta)$-fixed points
  $\left(C\times_\alpha (\rho_{V,W}(\Uc_K)\times A)\right)^{\Gamma(\beta)}$
  is a weak equivalence.
  Proposition \ref{prop:global eq for closed} then applies
  to show that the morphism $C\times_\alpha(\rho_{V,W}\times A)$ is a $C$-global equivalence.
\end{proof}

\subsection{Stable \texorpdfstring{$C$}{C}-global homotopy theory}
We continue to let $C$ be any topological group. An {\em orthogonal $C$-spectrum}
is an orthogonal spectrum equipped with a continuous $C$-action by automorphisms
of orthogonal spectra.
A {\em morphism} of orthogonal $C$-spectra is a $C$-equivariant morphism of orthogonal spectra.
If $\alpha:G\to C$ is a continuous homomorphism between topological groups
and $Y$ is an orthogonal $C$-spectrum, we write $\alpha^* Y$ for the
orthogonal $G$-spectrum with the same underlying orthogonal spectrum,
and with $G$-action through the homomorphism $\alpha$.

For a compact Lie group $G$, the $k$-th {\em equivariant homotopy group} $\pi_k^G(Y)$
of an orthogonal $G$-spectrum,
based on a complete $G$-universe, is defined, for example, in \cite[(3.1.11)]{schwede:global}.
We emphasize that while we allow for actions of arbitrary topological groups,
equivariant homotopy groups only show up for actions of {\em compact Lie groups}.

\begin{defn}\label{def:stable C-global equivalence}
Let $C$ be a topological group. A morphism $f:X\to Y$ of orthogonal $C$-spectra 
is a {\em $C$-global equivalence}
if for every compact Lie group $G$, every continuous homomorphism $\alpha:G\to C$
and all integers~$k$,
the map $\pi_k^G(\alpha^* f):\pi_k^G(\alpha^* X) \to \pi_k^G(\alpha^* Y)$ is an isomorphism.
\end{defn}

An equivalent way to recast the previous is definition is as follows.
A morphism $f:X\to Y$ of orthogonal $C$-spectra 
is a $C$-global equivalence if and only if for
every continuous homomorphism $\alpha:G\to C$ from a compact Lie group,
the morphism of orthogonal $G$-spectra $\alpha^* f:\alpha^* X\to \alpha^* Y$
is a $\upi_*$-isomorphism in the sense of \cite[Definition 3.1.12]{schwede:global}.

A morphism $f:A\to B$ of orthogonal $C$-spectra is an {\em h-cofibration}
if it has the homotopy extension property, 
i.e., given a morphism of orthogonal $C$-spectra $\varphi:B\to X$ and a homotopy
$H:A\sm [0,1]_+\to X$ starting with $\varphi f$,
there is a homotopy $\bar H:B\sm [0,1]_+\to X$ starting with $\varphi$
such that $\bar H\circ(f\sm [0,1]_+)=H$.

\begin{prop}\label{prop:h-cof for C}
  Let $C$ be a topological group.
  \begin{enumerate}[\em (i)]
  \item Let $\{f_i:X_i\to Y_i\}_{i\in I}$ be a family of 
    $C$-global equivalences of orthogonal $C$-spectra.
    Then the coproduct $\bigvee_{i\in I}f_i:\bigvee_{i\in I}X_i\to\bigvee_{i\in I}Y_i$
    is a $C$-global equivalence.
  \item
    Let 
    \[ \xymatrix{ A \ar[r]^-f\ar[d]_g & B\ar[d]\\ C \ar[r]_-k & D} \]
    be a pushout square of orthogonal $C$-spectra such that $f$ is a $C$-global equivalence.
    If, in addition, $f$ or $g$ is an h-cofibration, then also the morphism $k$ is a $C$-global equivalence.
  \item
    Let $f_n:Y_n\to Y_{n+1}$ be h-cofibrations of orthogonal $C$-spectra
    that are also $C$-global equivalences, for $n\geq 0$.
    Then the canonical morphism $f_\infty:Y_0\to \colim_n Y_n$ to any colimit of the sequence
    is a $C$-global equivalence.
  \end{enumerate}
\end{prop}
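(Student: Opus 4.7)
The plan is to reduce all three claims to the corresponding facts about $\underline\pi_*$-isomorphisms of orthogonal $G$-spectra for each compact Lie group $G$, which are already established in \cite{schwede:global}. Recall from the equivalent reformulation given just before the proposition that a morphism $f:X\to Y$ of orthogonal $C$-spectra is a $C$-global equivalence if and only if, for every continuous homomorphism $\alpha:G\to C$ from a compact Lie group, the restricted morphism $\alpha^* f$ of orthogonal $G$-spectra is a $\underline\pi_*$-isomorphism. So fixing such an $\alpha$, it suffices to check that each of the three constructions in the proposition is compatible with $\alpha^*$, and then to invoke the analogous results from \cite[Chapter 3]{schwede:global}.

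The key observation is that the restriction functor $\alpha^*:\spec_C \to \spec_G$ acts as the identity on underlying orthogonal spectra and merely alters the group action, so it commutes with all colimits; in particular it preserves wedges, pushout squares, and sequential colimits. For part (i), this reduces the claim to the fact that a wedge of $\underline\pi_*$-isomorphisms of orthogonal $G$-spectra is a $\underline\pi_*$-isomorphism, which follows because equivariant homotopy groups take wedges to direct sums. For part (ii), applying $\alpha^*$ produces a pushout square of orthogonal $G$-spectra in which $\alpha^* f$ is a $\underline\pi_*$-isomorphism, and we invoke the gluing lemma for $\underline\pi_*$-isomorphisms along h-cofibrations of orthogonal $G$-spectra, proved in \cite{schwede:global}. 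For part (iii), applying $\alpha^*$ termwise yields a sequence of h-cofibrations of orthogonal $G$-spectra that are $\underline\pi_*$-isomorphisms, and the corresponding sequential colimit statement for orthogonal $G$-spectra from \cite{schwede:global} completes the argument.

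The one step requiring care is the claim that $\alpha^*$ sends h-cofibrations of orthogonal $C$-spectra to h-cofibrations of orthogonal $G$-spectra. The cleanest way is to use the standard retract characterization: $f:A\to B$ is an h-cofibration if and only if the inclusion of the mapping cylinder
\[ B\ \cup_f\ (A\sm [0,1]_+)\ \hookrightarrow\ B\sm [0,1]_+ \]
admits a retraction in the ambient category. If such a retraction exists $C$-equivariantly, then viewing the whole diagram through $\alpha^*$ produces a $G$-equivariant retraction of the mapping cylinder inclusion of $\alpha^* f$, so $\alpha^* f$ is still an h-cofibration. With this preservation statement in hand, the three cases reduce exactly as described above, and the proposition follows with no further work beyond citing the corresponding facts for orthogonal $G$-spectra.
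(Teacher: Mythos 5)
Your proposal is correct and takes essentially the same approach as the paper: the paper likewise reduces all three claims to the $G$-equivariant statements by observing that $\alpha^*$ preserves colimits and h-cofibrations, and then cites the corresponding $\underline\pi_*$-isomorphism facts from \cite{schwede:global} (Corollary 3.1.37, Corollary 3.1.39, Proposition 3.1.41). Your additional remark spelling out why $\alpha^*$ preserves h-cofibrations via the mapping-cylinder retract characterization is a harmless elaboration of a point the paper treats as immediate.
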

\begin{proof}
  The fact that the restriction functor $\alpha^*:\spec_C\to\spec_G$
  along a continuous homomorphism $\alpha:G\to C$ preserves h-cofibrations and colimits
  reduces all three claims to the special case of compact Lie groups.
  In that context, all three statements can be found in \cite[III Theorem 3.5]{mandell-may},
  and proofs can be found in Corollary 3.1.37, Corollary 3.1.39 and Proposition 3.1.41
  of \cite{schwede:global}.
\end{proof}

We will now argue that the classes of $C$-global equivalences and h-cofibrations
make the category of orthogonal $C$-spectra into a {\em cofibration category}.
A cofibration category is a category equipped with two classes of morphisms,
the `weak equivalences' and the `cofibrations', that satisfy a specific list of axioms.
The original notion and the basic theory goes back to Brown \cite{brown:abstract}
who had introduced the dual notion under the name `category of fibrant objects'.
After Brown, several authors have proposed and studied variations of the concept
that differ in details;
a nice summary and an extensive list of references can be found in the introduction
of \cite{szumilo:cofibration cats}.
For our purposes, the formulation of Szumi{\l}o
is particularly convenient, and we will use the axioms as stated in
\cite[Section 1]{szumilo:cofibration cats}.

The {\em homotopy category} of a cofibration category $\Cc$ is any localization at the class
of weak equivalences, i.e., a functor $\gamma:\Cc\to\Ho(\Cc)$ that is initial among functors
from $\Cc$ that take weak equivalences to isomorphisms.
We will often refer to the category $\Ho(\Cc)$ alone as the homotopy category, leaving the
localization functor $\gamma$ implicit.
The cofibration structure gives rise to an abstract notion of
homotopy and a `calculus of left fractions', through which the homotopy category becomes manageable.
This calculus includes the following two facts, where `acyclic cofibration'
refers to a morphism that is simultaneously a cofibration and a weak equivalence.
\begin{enumerate}[(i)]
\item  Every morphism in $\Ho(\Cc)$ is a fraction of the form $\gamma(s)^{-1}\circ\gamma(f)$,
  where $f$ and $s$ are $\Cc$-morphisms with the same target, and $s$ is an acyclic cofibration.
\item Given two morphisms $f,g : A\to B$ in $\Cc$, then $\gamma(f) = \gamma(g)$ in $\Ho(\Cc)$
  if and only if there is an acyclic cofibration $s:B\to\bar B$  such that $s f$ and $s g$ are homotopic.
\end{enumerate}
A proof of these facts (or rather the dual statements for `categories of fibrant objects')
can be found in \cite[I.2 Theorem I]{brown:abstract} and Remark 2 immediately thereafter.  

In our applications we will want to know that the $C$-global stable homotopy category
has arbitrary coproducts, and that these are modeled by wedges of orthogonal $C$-spectra.
This fact is a special case of a general property of cofibration categories
with well-behaved set-indexed coproducts, see the following proposition.
The proposition should not be surprising, 
but I am not aware of a reference, so I include a proof.
For {\em finite} coproducts, the following proposition is a special case
of the statement dual to \cite[Corollaire 2.9]{cisinski:categories_derivables}.
Following \cite{szumilo:cofibration cats} we call a cofibration category {\em cocomplete}
if it has set-indexed coproducts, and if the classes of cofibrations and
acyclic cofibrations are stable under coproducts.

\begin{prop}\label{prop:infinite coproducts}
  Let $\Cc$ be a cocomplete cofibration category.
  Then the localization functor $\gamma:\Cc\to \Ho(\Cc)$ preserves
  coproducts. In particular, the homotopy category admits coproducts.
\end{prop}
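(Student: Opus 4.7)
My plan is to show directly that for any set-indexed family $\{X_i\}_{i\in I}$ of objects of $\Cc$ with coproduct $\bigsqcup_i X_i$ and canonical inclusions $\iota_i:X_i\to\bigsqcup_i X_i$, the cone $\{\gamma(\iota_i)\}$ exhibits $\gamma(\bigsqcup_i X_i)$ as a coproduct of the family $\{\gamma(X_i)\}$ in $\Ho(\Cc)$; this gives both claims at once. The argument rests on the calculus of left fractions recalled as facts (i) and (ii) in the excerpt, combined with the cocompleteness hypothesis that cofibrations and acyclic cofibrations are stable under coproducts, plus stability of acyclic cofibrations under cobase change.

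For existence of the induced morphism, I would start from morphisms $f_i:\gamma(X_i)\to Y$ in $\Ho(\Cc)$ and, by fact (i), represent each as a left fraction $f_i=\gamma(s_i)^{-1}\gamma(g_i)$ with $s_i:Y\to Y_i$ an acyclic cofibration and $g_i:X_i\to Y_i$. To obtain a common denominator I would form the pushout $Y^{*}$ of the coproduct map $\bigsqcup_i s_i:\bigsqcup_i Y\to\bigsqcup_i Y_i$ along the fold map $\bigsqcup_i Y\to Y$. Cocompleteness makes $\bigsqcup_i s_i$ an acyclic cofibration, and then cobase change makes the induced morphism $t:Y\to Y^{*}$ an acyclic cofibration. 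The pushout provides morphisms $q_i:Y_i\to Y^{*}$ with $q_i\circ s_i=t$, so the composites $q_i\circ g_i:X_i\to Y^{*}$ assemble into a single morphism $h:\bigsqcup_i X_i\to Y^{*}$, and I would take $f:=\gamma(t)^{-1}\gamma(h)$, which satisfies $f\circ\gamma(\iota_i)=\gamma(t)^{-1}\gamma(q_i)\gamma(g_i)=\gamma(s_i)^{-1}\gamma(g_i)=f_i$ as required.

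For uniqueness, suppose $f,f':\gamma(\bigsqcup_i X_i)\to Y$ agree after precomposition with every $\gamma(\iota_i)$. The calculus of fractions lets me represent them over a common target as $f=\gamma(s)^{-1}\gamma(g)$ and $f'=\gamma(s)^{-1}\gamma(g')$ with the same acyclic cofibration $s:Y\to\bar Y$. The hypothesis then reads $\gamma(g\iota_i)=\gamma(g'\iota_i)$ for every $i$, and fact (ii) supplies, for each $i$, an acyclic cofibration $t_i:\bar Y\to\bar Y_i$ together with a homotopy $H_i$ between $t_i g\iota_i$ and $t_i g'\iota_i$. Applying the same multiple-pushout construction to the $t_i$ produces an acyclic cofibration $t:\bar Y\to\bar Y^{*}$ and morphisms $u_i:\bar Y_i\to\bar Y^{*}$ with $u_i t_i=t$, so each $u_i H_i$ is a homotopy between $tg\iota_i$ and $tg'\iota_i$ inside $\bar Y^{*}$.

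The main obstacle will be to assemble the family $\{u_iH_i\}$ into a single homotopy between $tg$ and $tg'$. My strategy is to work with \emph{good} cylinders -- cylinders $C_i$ on $X_i$ in which both inclusions $X_i\to C_i$ are acyclic cofibrations, obtained by factoring the fold map $X_i\sqcup X_i\to X_i$ and invoking two-out-of-three against the projection $C_i\to X_i$ -- and to realize each $H_i$ on the chosen $C_i$. Cocompleteness then makes each inclusion $\bigsqcup_i X_i\to\bigsqcup_i C_i$ an acyclic cofibration, so two-out-of-three, applied to the fold-projection $\bigsqcup_i C_i\to\bigsqcup_i X_i$, shows the latter is a weak equivalence; hence $\bigsqcup_i C_i$ is a cylinder on $\bigsqcup_i X_i$. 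The map $\bigsqcup_i C_i\to\bar Y^{*}$ assembled from the $u_i H_i$ is then a homotopy witnessing $tg\sim tg'$, so fact (ii) in the reverse direction yields $\gamma(g)=\gamma(g')$ and therefore $f=f'$. The final assertion of the proposition is immediate: once $\gamma$ preserves coproducts, the cone $\{\gamma(\iota_i)\}$ realizes a coproduct of $\{\gamma(X_i)\}$ in $\Ho(\Cc)$.
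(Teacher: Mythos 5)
Your argument is correct and follows essentially the same route as the paper's: calculus of left fractions to reduce to morphisms in $\Cc$, the fold-versus-coproduct pushout to produce a single acyclic cofibration into a common target, and cocompleteness to promote a family of cylinders on the $X_i$ to a cylinder on $\amalg_i X_i$. The two expositions differ only cosmetically. For surjectivity the paper composes the zigzag $\gamma(\nabla)\circ\gamma(\amalg s_i)^{-1}\circ\gamma(\amalg f_i)$ directly, where you first push out to obtain a genuine left fraction $\gamma(t)^{-1}\gamma(h)$; for injectivity the paper first treats the special case $\psi=\gamma(f)$, $\psi'=\gamma(f')$ and then reduces the general case to it, while you put both morphisms over a common denominator up front---same ingredients, different packaging.

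One small wrinkle: you announce that you will \emph{first} choose good cylinders $C_i$ and \emph{then} ``realize each $H_i$ on the chosen $C_i$''. That realization step is not automatic---fact (ii) hands you a homotopy on \emph{some} cylinder, and transporting it to a different, pre-chosen cylinder requires a cylinder-change lemma (and, in a general cofibration category, typically one more enlargement of the target by an acyclic cofibration). You can sidestep this entirely, exactly as the paper does, by simply using whatever cylinders $Z_i$ come packaged with the homotopies from fact (ii): in a cofibration category every object is cofibrant, so both inclusions $X_i\to Z_i$ are automatically acyclic cofibrations (cobase change plus two-out-of-three), and your coproduct argument then applies verbatim to $\amalg_i Z_i$. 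With that adjustment the proof is complete.
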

\begin{proof}
  Since the localization functor $\gamma$ can be arranged to be the identity on objects,
  we drop $\gamma$ in front of objects to simplify the notation.
  Now we consider an index set $I$,
  and an $I$-indexed family $\{X_i\}_{i\in I}$ of $\Cc$-objects.
  We denote a coproduct of the family by $\amalg_{i\in I}X_i$, and we write
  $\kappa_j:X_j\to\amalg_{i\in I} X_i$ for the universal morphisms.
  We must show that for every $\Cc$-object $Y$, the map
  \begin{equation}\label{eq:universal_test}
    \Ho(\Cc)(\amalg_{i\in I}X_i,Y)\ \to \ {\prod}_{j\in I}\Ho(\Cc)(X_j,Y)    \ , \quad
    \psi \longmapsto \ (\psi\circ\gamma(\kappa_j))_{j\in I}
  \end{equation}
  is bijective.
  For surjectivity we let $(\psi_j:X_j\to Y)$ be any $I$-indexed family of morphisms in $\Ho(\Cc)$.
  By the calculus of left fractions, we can write
  \[ \psi_j \ = \ \gamma(s_j)^{-1}\circ \gamma(f_j)  \]
  for some families of $\Cc$-morphisms $f_j:X_j\to W_j$ and $s_j:Y\to W_j$
  such that the morphisms $s_j$ are acyclic cofibrations.
  We choose a coproduct of the family $\{W_i\}_{i\in I}$
  and a coproduct of the constant family $\{Y\}_{i\in I}$ of copies of $Y$.
  Then we form the $\Cc$-morphisms
  \[ \amalg_{i\in I} X_i\ \xra{\amalg f_i} \ \amalg_{i\in I} W_i\ \xla[\sim]{\amalg s_i} \
    \amalg_{i\in I}Y \ \xra{\ \nabla\ } \ Y \ ,\]
  where $\nabla$ denotes the fold morphism.
  Since coproducts of acyclic cofibrations
  are acyclic cofibrations, the middle morphism is an acyclic cofibration.
  So we can form the morphism
  \[  \gamma(\nabla)\circ\gamma(\amalg s_i)^{-1}\circ \gamma(\amalg f_i)\ : \ \amalg_{i\in I} X_i \ \to \ Y\]
  in the homotopy category.
  The map \eqref{eq:universal_test} sends this morphism
  to the original family $(\psi_j)_{j\in I}$;
  so the map \eqref{eq:universal_test} is surjective.

  For injectivity we consider two morphisms $\psi,\psi':\amalg_{i\in I} X_i\to Y$ in $\Ho(\Cc)$
  such that $\psi\circ\gamma(\kappa_j)=\psi'\circ\gamma(\kappa_j)$ for all $j\in I$.
  We start with the special case where $\psi=\gamma(f)$ and $\psi'=\gamma(f')$
  for two $\Cc$-morphisms $f,f':\amalg_{i\in I}X_i\to Y$.
  Because
  \[     \gamma(f\kappa_j)\ = \ 
    \psi\circ\gamma(\kappa_j)\ =\ \psi'\circ\gamma(\kappa_j) = \gamma(f'\kappa_j)\ , \]
  the calculus of left fractions provides acyclic cofibrations $t_j: Y\to \bar Y_j$
  such that $t_j f\kappa_j:X_i\to \bar Y_j$ is homotopic to
  $t_j f'\kappa_j:X_j\to \bar Y_j$ for every $j\in I$.
  We choose a pushout:
  \[ \xymatrix{
      \amalg_{i\in I} Y \ar[r]^-\nabla\ar[d]_{\amalg t_i}^\sim & Y \ar[d]_\sim^t\\
      \amalg_{i\in I} \bar Y_i \ar[r]_-{\nabla'} &  Y'
    } \]
  Since coproducts of acyclic cofibrations are acyclic cofibrations,
  the left vertical morphism is an acyclic cofibration, and hence so is
  the right vertical morphism $t:Y\to Y'$.

  For each $j\in I$, we choose a cylinder object $Z_j$ of $X_j$ and a homotopy $H_j:Z_j\to Y_j$
  from  $t_j f\kappa_j$ to $t_j f'\kappa_j$.
  Since coproducts preserve cofibrations and acyclic cofibrations,
  the coproduct $\amalg_{i\in I}Z_i$ is a cylinder object for $\amalg_{i\in I} X_i$,
  where we leave the additional data of a cylinder object implicit.
  Moreover, the composite
  \[ \amalg_{i\in I}Z_i\ \xra{\amalg H_i}\ \amalg_{i\in I} \bar Y_i\ \xra{\ \nabla'\ } \ Y' \]
  is then a homotopy from $t f$ to $t f'$.
  We conclude that $\gamma(t f)=\gamma(t f')$ in $\Ho(\Cc)$.
  Since $t$ is a weak equivalence, $\gamma(t)$ is an isomorphism in $\Ho(\Cc)$,
  and so $\gamma(f)=\gamma(f')$. This proves injectivity in the special case.

  Now we treat the general case, and we let $\psi,\psi':\amalg_{i\in I}X_i\to Y$
  be arbitrary morphisms in $\Ho(\Cc)$ such that $\psi\circ\gamma(\kappa_j)=\psi'\circ\gamma(\kappa_j)$
  for all $j\in I$.
  The calculus of left fractions provides $\Cc$-morphisms $f:\amalg_{i\in I}X_i\to W$,
  $f':\amalg_{i\in I}X_i\to W'$, $s:Y\to W$ and $s:Y\to W'$
  such that $s$ and $s'$ are acyclic cofibrations and such that
  \[ \psi \ = \ \gamma(s)^{-1}\circ\gamma(f) \text{\qquad and\qquad}
     \psi' \ = \ \gamma(s')^{-1}\circ\gamma(f')\ .  \]
   We choose a pushout:
   \[ \xymatrix{
       Y \ar[r]_-\sim^-s\ar[d]_{s'}^\sim & W \ar[d]_\sim^t\\
       W' \ar[r]^\sim_-{t'} &  V
     } \]
   Then $t$ and $t'$ are acyclic cofibrations because $s$ and $s'$ are.
   We now  obtain the relation
   \[   \gamma(t f)\circ\gamma(\kappa_j)\
     = \ \gamma(t)\circ\gamma(s)\circ\psi\circ\gamma(\kappa_j)\
     = \ \gamma(t')\circ\gamma(s')\circ\psi'\circ\gamma(\kappa_j)\ =\
       \gamma(t' f')\circ\gamma(\kappa_j) \]
   for every $j\in I$.
   The special case treated above lets us conclude that $\gamma(t f)=\gamma(t' f')$.
   Thus
   \[ \gamma(t s)\circ \psi \ = \ \gamma(t f)\ = \ \gamma(t' f')\ = \ \gamma(t' s')\circ \psi' \ .\]
   Because the morphism
   $\gamma(t s)=\gamma(t' s')$ is an isomorphism,
   also $\psi=\psi'$. This completes the proof.
\end{proof}

A cofibration category is {\em pointed} if it has a zero object,
i.e., if every initial object is also terminal.
The homotopy category of a pointed cofibration category supports a specific
{\em suspension functor} $\Sigma:\Ho(\Cc)\to\Ho(\Cc)$,
see the dual to \cite[I.4 Theorem 3]{brown:abstract},
or \cite[Proposition A.4]{schwede:topological}.
A cofibration category is {\em stable} if it is pointed and the
suspension functor is an autoequivalence of the homotopy category.
The homotopy category of a stable cofibration category supports the structure
of a triangulated category, see \cite[Theorem A.12]{schwede:topological}.
A candidate triangle in $\Ho(\Cc)$ is distinguished if and only if it is isomorphic
to the triangle
\[ A \ \xra{\gamma(j) }\ B \ \xra{\gamma(\text{proj})} \ B/A \ \xra{\delta(j)}\ \Sigma A \]
arising from some cofibration $j:A\to B$;
here $\delta(j):B/A\to \Sigma A$ is a specific `connecting morphism' in $\Ho(\Cc)$,
defined in \cite[(A.10)]{schwede:topological}.

We write
\begin{equation}\label{eq:GH_C} 
  \GH_C\ = \ \spec_C[C\text{-global equivalences}^{-1}]
\end{equation}
for the localization of the category of orthogonal $C$-spectra at the class of $C$-global equivalences,
and we refer to it as the {\em $C$-global stable homotopy category}.
The suspension functor  $-\sm S^1:\spec_C\to\spec_C$ preserves $C$-global equivalences,
so it descends to a functor $\Ho(-\sm S^1):\GH_C\to \GH_C$ by the universal property
of localizations.
For every orthogonal $C$-spectrum $X$,
the $C$-spectrum $X\sm S^1$ is a cokernel
of the `cone inclusion' $-\sm 1:X\to X\sm[0,1]=C X$, which is always an h-cofibration
to an object that is homotopy equivalent, and hence $C$-globally equivalent, to the zero object.
So we can -- and will -- choose the suspension functor on the $C$-global stable
homotopy category as $\Sigma=\Ho(-\sm S^1)$.

\begin{theorem}\label{thm:cofibration structure}
  Let $C$ be a topological group.
  \begin{enumerate}[\em (i)]
  \item 
    The $C$-global equivalences and the h-cofibrations make the category of orthogonal $C$-spectra
    into a cocomplete stable cofibration category.
  \item
    The localization functor $\gamma:\spec_C\to \GH_C$ preserves coproducts. In particular,
    the $C$-global stable homotopy category admits coproducts.
  \end{enumerate}
\end{theorem}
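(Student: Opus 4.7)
The plan is to verify Szumi\l o's axioms for a cocomplete stable cofibration category, with weak equivalences the $C$-global equivalences and cofibrations the h-cofibrations, and then to deduce part (ii) formally from Proposition \ref{prop:infinite coproducts}. The category is pointed since the trivial orthogonal $C$-spectrum is both initial and terminal. Axioms (C1)--(C3), namely that isomorphisms are acyclic cofibrations, cofibrations compose, and cofibrations are stable under cobase change, are standard consequences of the homotopy extension property, verifiable by arguments formally identical to those in \cite[Appendix A]{schwede:global}. Axiom (C4), stability of acyclic cofibrations under cobase change, is precisely Proposition \ref{prop:h-cof for C}(ii). The 2-out-of-6 axiom holds because $C$-global equivalences are the preimage of isomorphisms under the family of functors $\{\pi_k^G\circ\alpha^*\}$, and 2-out-of-6 holds for isomorphisms in any category.

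For the factorization axiom, I would use the standard mapping cylinder: factor any morphism $f:A\to B$ as
\[ A\ \xra{\ i_0\ }\ M(f)\ =\ (A\sm [0,1]_+)\cup_f B\ \xra{\ r\ }\ B\ ,\]
where $i_0$ is an h-cofibration by a retract argument internal to the cylinder, and $r$ is a $C$-equivariant strong deformation retraction, hence a homotopy equivalence and thus a $C$-global equivalence. Cocompleteness follows from two observations: wedges of orthogonal $C$-spectra are formed objectwise, so wedges of h-cofibrations are h-cofibrations; and wedges of $C$-global equivalences are $C$-global equivalences by Proposition \ref{prop:h-cof for C}(i). Hence both cofibrations and acyclic cofibrations are stable under set-indexed coproducts.

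The main obstacle is stability, i.e., showing that the suspension functor $\Sigma=\Ho(-\sm S^1):\GH_C\to\GH_C$ is an autoequivalence. The strategy is to reduce to the equivariant stability results already established in \cite[Section 3.1]{schwede:global}. Consider the additive shift functor $\sh$ on orthogonal $C$-spectra defined by $(\sh X)(V)=X(V\oplus\mR)$, equipped with its canonical $C$-action induced from that on $X$. There is a natural morphism $\lambda_X:X\sm S^1\to\sh X$ whose underlying morphism of orthogonal $G$-spectra is a $\upi_*$-isomorphism for every compact Lie group $G$; consequently $\lambda$ is a $C$-global equivalence, so $\Sigma$ is naturally isomorphic in $\GH_C$ to the functor induced by $\sh$. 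The shift $\sh$ admits a homotopical pseudo-inverse built from a loop-style construction on a cofinal exhaustion, as in the non-equivariant and equivariant cases; the unit and counit of the associated derived adjunction restrict along every continuous homomorphism $\alpha:G\to C$ to $\upi_*$-isomorphisms of orthogonal $G$-spectra, and are therefore $C$-global equivalences. This exhibits $\sh$, and hence $\Sigma$, as an autoequivalence of $\GH_C$. Part (ii) then follows immediately by applying Proposition \ref{prop:infinite coproducts} to the cocomplete cofibration category just constructed.
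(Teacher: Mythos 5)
Your overall structure matches the paper's: verify Szumi{\l}o's axioms, use the mapping cylinder for factorization, appeal to Proposition~\ref{prop:h-cof for C} for the acyclic halves of the cobase-change/composition/coproduct axioms, and then invoke Proposition~\ref{prop:infinite coproducts} for part (ii). Two points of divergence are worth flagging.

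First, a minor one: your paraphrase of (C1)--(C3) ("isomorphisms are acyclic cofibrations, cofibrations compose, cofibrations are stable under cobase change") does not match Szumi{\l}o's numbering as used in the paper, where (C2) is the existence of a zero object and (C3) is that the morphism from the zero object is always a cofibration; the composition/cobase-change conditions on plain cofibrations are part of (C4)/(C6)/(C7-$\kappa$). In the paper these are handled at one stroke by observing that h-cofibrations are characterized by a left lifting property against evaluation $X^{[0,1]}\to X$, hence automatically closed under cobase change, sequential composition, and arbitrary coproducts; your appeal to "wedges formed objectwise" is an equivalent but slightly less uniform argument. You also omit (C6), stability under countable composition, though the lifting-property characterization covers it.

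Second, and more substantively, your stability argument is a genuine detour. You pass to the additive shift $\sh$ via the natural map $\lambda_X:X\sm S^1\to\sh X$ and then gesture at a "homotopical pseudo-inverse built from a loop-style construction on a cofinal exhaustion," which is vague and unnecessary. The paper's route is shorter and more precise: the loop functor $\Omega$ already preserves $C$-global equivalences (looping by $S^1$ preserves $\upi_*$-isomorphisms restriction-by-restriction), so it descends to $\GH_C$, and the unit $X\to\Omega(X\sm S^1)$ and counit $(\Omega X)\sm S^1\to X$ of the point-set adjunction $(-\sm S^1,\Omega)$ are $\upi_*$-isomorphisms of orthogonal $G$-spectra for every compact Lie group $G$ by \cite[Proposition 3.1.25]{schwede:global}, hence $C$-global equivalences. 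This exhibits $\Ho(-\sm S^1)$ and $\Ho(\Omega)$ as inverse autoequivalences directly, with no need to construct a pseudo-inverse to $\sh$. Your approach would work once the pseudo-inverse is constructed, but it buys nothing over the direct argument and the relevant step is left unjustified.
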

\begin{proof}
  (i) We verify the axioms (C0) -- (C6) and (C7-$\kappa$)
  for any regular cardinal $\kappa$, as stated in \cite[Definition 1.1]{szumilo:frames}.
  Most of the axioms are straightforward from the definitions:
  the $C$-global equivalences satisfy the 2-out-of-6 property (C0);
  every isomorphism is a $C$-global equivalence and an h-cofibration (C1);
  the trivial orthogonal $C$-spectrum is a zero object (C2); the unique morphism $\ast\to X$
  from a trivial orthogonal $C$-spectrum to any orthogonal $C$-spectrum is an h-cofibration (C3).
  Since h-cofibrations are precisely the morphisms with the left lifting property against
  the class of morphisms $X^{[0,1]}\to X$ that evaluate a path at 0, the class of
  h-cofibrations is stable under pushouts along arbitrary morphisms (half of C4),
  under sequential colimits (half of C6) and under arbitrary coproducts (half of C7-$\kappa$).
  The other halves of axioms (C4), (C6) and (C7-$\kappa$)
  demand that the class of h-cofibrations that are simultaneously
  $C$-global equivalences be stable under cobase change, under sequential composites,
  and under $\kappa$-small coproducts;
  Proposition \ref{prop:h-cof for C} takes care of these requirements.
  Finally, a morphism $X\to Y$ of orthogonal $C$-spectra
  factors as the composite of the mapping cylinder inclusion
  \[ X \ \to \ X\sm[0,1]_+\cup_f Y\ ,  \]
  which is an h-cofibration, followed by the projection 
  $X\sm[0,1]_+\cup_f Y\to Y$ to the `end' of the cylinder.
  This projection is a homotopy equivalence of orthogonal $C$-spectra,
  and hence a $C$-global equivalence. This verifies the factorization axiom (C5).
    
  As we explained before stating the theorem, the abstract suspension functor
  is here given by $\Ho(-\sm S^1):\GH_C\to \GH_C$.
  The loop functor $\Omega:\spec_C\to\spec_C$ also preserves $C$-global equivalences;
  so it, too, descends to a functor on the $C$-global stable homotopy category.
  The unit $X\to \Omega(X\sm S^1)$ and counit $(\Omega X)\sm S^1\to X$
  of the adjunction $(-\sm S^1,\Omega)$ are $C$-global equivalences
  by \cite[Proposition 3.1.25]{schwede:global};
  so $\Ho(-\sm S^1)$ and $\Ho(\Omega)$ are inverse autoequivalences of $\GH_C$.
  In particular, the cofibration structure  on the category of orthogonal $C$-spectra is stable.
  Part (ii) is a special case of Proposition \ref{prop:infinite coproducts}.
\end{proof}

The unreduced suspension spectrum of an orthogonal space is defined in
Construction 4.1.7 of \cite{schwede:global}.
The suspension spectrum functor is continuous,
so it extends, by functoriality, to a functor
\[       \Sigma^\infty_+ \ : \ \spc_C \ \to \ \spec_C  \]
from orthogonal $C$-spaces to orthogonal $C$-spectra.
The next proposition is the immediate generalization of \cite[Corollary 4.1.9]{schwede:global}
from global to $C$-global homotopy theory. Essentially the same proof as there
also works in our more general context, mutatis mutandis; we omit the details.

\begin{prop}\label{prop:suspension is homotopical}
  Let $C$ be a topological group. The unreduced suspension spectrum functor takes
  $C$-global equivalences of orthogonal $C$-spaces to $C$-global equivalences of
  orthogonal $C$-spectra.
\end{prop}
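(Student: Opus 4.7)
The plan is to mirror the proof of \cite[Corollary 4.1.9]{schwede:global} in the present more general setting. The key formal observation is that the suspension spectrum functor, being defined objectwise by $(\Sigma^\infty_+ X)(V)=X(V)_+\sm S^V$, commutes with restriction of scalars along any continuous homomorphism $\alpha\colon G\to C$: there is a natural identification $\alpha^*(\Sigma^\infty_+ X)\iso \Sigma^\infty_+(\alpha^* X)$ of orthogonal $G$-spectra, where $\alpha^* X$ is the orthogonal $G$-space obtained from $X$ by restricting $C$-actions along $\alpha$. By the very definition of stable $C$-global equivalence, it thus suffices to show that for each fixed continuous homomorphism $\alpha\colon G\to C$ with $G$ a compact Lie group, the induced morphism $\Sigma^\infty_+\alpha^* f$ is a $\pi_*^G$-isomorphism of orthogonal $G$-spectra.

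Next one unpacks what the unstable $C$-global equivalence on $f$ implies about $\alpha^* f$. When evaluated on a $G$-representation $V$, the two commuting $G$-actions on $X(V)$ — the one induced by the $G$-action on $V$ via functoriality of $X$, and the one induced by the $C$-action on $X$ via $\alpha$ — combine into a diagonal $G$-action whose fixed-point space is precisely the $\Gamma(\alpha)$-fixed-point space appearing in Definition \ref{def:unstable C-global equivalence}. Since the $C$-global equivalence condition is imposed simultaneously for $\alpha$ and all of its restrictions $\alpha|_H\colon H\to C$ to closed subgroups $H\leq G$, we obtain the approximate lifting property at every $H$-fixed-point space of $\alpha^* f$: any relative lift problem with values in $\bigl((\alpha^* X)(V)^H,(\alpha^* Y)(V)^H\bigr)$ admits, after enlargement of $V$ to some larger $G$-representation $W$, a solution into $(\alpha^* X)(W)^H$ that is compatible, up to a relative homotopy on the target, with the original diagram.

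Finally, one invokes the $G$-equivariant counterpart of the argument in the proof of \cite[Proposition 4.1.8]{schwede:global}, based on the standard identification
\[
\pi_k^G(\Sigma^\infty_+ Z)\ \iso\ \colim_{V\in s(\Uc_G)}\,[S^{V\oplus\mR^k},\, Z(V)_+\sm S^V]_*^G
\]
of the equivariant homotopy groups of a suspension spectrum as a filtered colimit of based $G$-homotopy classes indexed by finite-dimensional subrepresentations of a complete $G$-universe. The approximate lifting property at all subgroup fixed points, together with equivariant CW-approximation of source and target spheres, then yields both surjectivity (any class in the target is represented on some $Y(V)$ and lifts to $X(W)$ after enlargement) and injectivity (two classes that become equal in the target become equal in the source after a further enlargement) of $(\Sigma^\infty_+\alpha^* f)_*$. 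The main point of technical care is the bookkeeping between $\Gamma(\alpha)$-fixed points and ordinary $G$-fixed points together with the stabilization over $s(\Uc_G)$; once this is in place, the proof transcribes the one in \cite[Corollary 4.1.9]{schwede:global} without essential modification.
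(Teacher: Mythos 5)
Your proposal follows exactly the route the paper itself indicates: the paper omits the proof and simply states that the proposition is the ``immediate generalization'' of \cite[Corollary 4.1.9]{schwede:global} with ``essentially the same proof\ldots mutatis mutandis.'' Your careful identification of the bookkeeping point --- namely that the total $G$-action on $X(V)$ for a $G$-representation $V$, combining the internal functoriality action with the external $C$-action via $\alpha$, is precisely the diagonal action whose fixed points are the $\Gamma(\alpha)$-fixed points in Definition~\ref{def:unstable C-global equivalence} --- makes explicit the one genuine translation step the paper leaves implicit, and the rest of your sketch correctly tracks the argument from the global case.
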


We let $U$ be a representation of a compact Lie group $G$.
For an orthogonal $G$-spectrum, we write $\map_*(S^U,Y)$ for the orthogonal $G$-spectrum
obtained by applying the based mapping space from the representation sphere levelwise.
We define the {\em $U$-th $G$-equivariant homotopy group} of an orthogonal $G$-spectrum $Y$ as
\begin{equation}\label{eq:repgraded_homotopy_group}
 \pi_U^G(Y)\ = \  \pi_0^G(\map_*(S^U,Y)) \ .   
\end{equation}
So for $U=\mR^n$ with trivial $G$-action, this specializes to
the $n$-th homotopy group $\pi_n^G(Y)$.
Looping by a representation sphere preserves equivariant equivalences,
compare \cite[Proposition 3.1.40]{schwede:global}.
So for every continuous homomorphism $\alpha:G\to C$ to a topological group,
the functor $\pi_U^G(\alpha^*(-))$ takes $C$-global equivalences to isomorphisms.
Hence the universal property of a localization provides a unique factorization
\[ \pi_U^G\circ \alpha^* \ : \ \GH_C \ \to \ \Ab\]
through the $C$-global stable homotopy category, for which we will
use the same notation.
Our next major goal is to show that this functor is representable by
the suspension spectrum of a specific based orthogonal $C$-space,
the `global Thom space' over the global classifying space of $\alpha:G\to C$
associated to the given representation.

\begin{construction}[$C$-global Thom spaces]
  The $C$-global classifying space of a continuous homomorphism
  $\alpha:G\to C$ from a compact Lie group to a topological group was introduced in
  Construction \ref{con:Bgl of alpha}.
  The definition involves a choice of faithful $G$-representation $V$,
  suppressed from the notation. 
  Now we also consider another $G$-representation $U$, and we define an associated `$C$-global Thom space'
  over $B_{\gl}\alpha$, namely the based orthogonal $C$-space
  \[ (B_{\gl}\alpha)^U \ = \ C_+\sm_\alpha( \bL(V,-)_+\sm S^U)\ : \ \bL \ \to \ C\bT \ .\]
  For example, if $U=\mR^n$ with trivial $G$-action, then
  $(B_{\gl}\alpha)^U$ is isomorphic to $(B_{\gl}\alpha)_+\sm S^n$.
  The next theorem (with $Z$ being the unit sphere of the representation $U$)
  shows that the resulting suspension spectrum is independent, up to $C$-global equivalence,
  of the choice of faithful $G$-representation;
  this is the justification for omitting the representation $V$ from the notation.
\end{construction}

\begin{theorem}\label{thm:rho2C-global}
  Let $\alpha:G\to C$ be a continuous homomorphism from a compact Lie group to
  a topological group. Let $V$ and $W$ be $G$-representations such that $V$ is faithful,
  and let $Z$ be a $G$-space.
  Then the morphism of orthogonal $C$-spectra
  \[ \Sigma^\infty C_+\sm_{\alpha}((\rho_{V,W})_+\sm Z^\diamond)\ : \
    \Sigma^\infty C_+\sm_\alpha (\bL(V\oplus W,-)_+\sm Z^\diamond)\ \to \
    \Sigma^\infty C_+\sm_\alpha (\bL(V,-)_+\sm Z^\diamond) \]
  is a $C$-global equivalence, where $Z^\diamond$ denotes the unreduced suspension of $Z$.
\end{theorem}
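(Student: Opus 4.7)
My approach is to bootstrap this stable, based statement from the unstable Proposition \ref{prop:Bgl independence} by means of a cofiber-sequence argument in the $C$-global stable homotopy category $\GH_C$.

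I would first take a cone point of $Z^\diamond$ as basepoint; this point is $G$-fixed, and the inclusion $\ast\to Z^\diamond$ is a $G$-equivariant cofibration because an open cone neighborhood deformation retracts $G$-equivariantly onto the cone point. For any orthogonal $G$-space $Y$, the pointset cofiber sequence of based orthogonal $G$-spaces
\[ Y_+ \ \to \ (Y\times Z^\diamond)_+ \ \to \ Y_+\sm Z^\diamond \]
therefore has its first morphism a $G$-equivariant h-cofibration. Applying $C_+\sm_\alpha(-)$, together with the standard identification $C_+\sm_\alpha X_+ = (C\times_\alpha X)_+$, produces a cofiber sequence of based orthogonal $C$-spaces
\[ (C\times_\alpha Y)_+ \ \to \ (C\times_\alpha(Y\times Z^\diamond))_+ \ \to \ C_+\sm_\alpha(Y_+\sm Z^\diamond) \]
whose first morphism is still an h-cofibration.

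Specializing this construction to $Y=\bL(V,-)$ and $Y=\bL(V\oplus W,-)$, the morphism $\rho_{V,W}$ induces a ladder between the two resulting cofiber sequences. Applying the reduced suspension spectrum $\Sigma^\infty$, and using the identification $\Sigma^\infty(X_+) = \Sigma^\infty_+ X$, I obtain a morphism of distinguished triangles in $\GH_C$. By Proposition \ref{prop:Bgl independence} (with $A=\ast$) together with Proposition \ref{prop:suspension is homotopical}, the left vertical morphism is a $C$-global equivalence, and the same two propositions applied with $A=Z^\diamond$ show that the middle vertical morphism is a $C$-global equivalence.

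Since the first two vertical morphisms of the ladder are isomorphisms in the triangulated category $\GH_C$, the third is also an isomorphism. By construction, that third morphism is precisely $\Sigma^\infty C_+\sm_\alpha((\rho_{V,W})_+\sm Z^\diamond)$, establishing the theorem. I expect the only real subtlety to be keeping track of the h-cofibration hypotheses so that the pointset cofiber sequences genuinely yield distinguished triangles in $\GH_C$; this is handled by the cofibration property of the cone-point inclusion together with the fact that $C_+\sm_\alpha(-)$ and $\Sigma^\infty$ both preserve h-cofibrations.
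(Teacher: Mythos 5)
Your proof is correct and uses essentially the same strategy as the paper: two applications of Proposition \ref{prop:Bgl independence}, upgraded to the stable setting via Proposition \ref{prop:suspension is homotopical}, followed by a comparison of cofiber sequences. The only difference is cosmetic: you use the cofiber sequence coming from the cone-point section $Y_+\to (Y\times Z^\diamond)_+\to Y_+\sm Z^\diamond$ (so you invoke the proposition with $A=Z^\diamond$), whereas the paper uses the collapse $(Y\times Z)_+\to Y_+$ and identifies its mapping cone with $Y_+\sm Z^\diamond$ (invoking the proposition with $A=Z$), then compares long exact sequences of equivariant homotopy groups via the five lemma; both triangles have the same cofiber, and your triangulated-category phrasing is equivalent to the paper's five-lemma argument.
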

\begin{proof}
  Proposition \ref{prop:Bgl independence} shows that the two morphisms of
  orthogonal $C$-spaces
  \begin{align*}
    C\times_\alpha\rho_{V,W}\quad &: \ C\times_\alpha \bL(V\oplus W,-)\ \to\ C\times_\alpha \bL(V,-)
                                    \text{\qquad and}\\
                                    C\times_\alpha(\rho_{V,W}\times Z)\ &: \ C\times_\alpha (\bL(V\oplus W,-)\times Z)\ \to
    C\times_\alpha (\bL(V,-)\times Z)
  \end{align*}
   are $C$-global equivalences.
   So the induced morphisms of unreduced suspension spectra
   \[ 
     \Sigma^\infty_+ C\times_\alpha \rho_{V,W} \text{\qquad and\qquad}
     \Sigma^\infty_+  C\times_\alpha(\rho_{V,W}\times Z)
   \]
   are $C$-global equivalences of orthogonal $C$-spectra by Proposition \ref{prop:suspension is homotopical}.
   The spectrum $\Sigma^\infty C_+\sm_\alpha (\bL(V,-)_+\sm Z^\diamond)$ is isomorphic
   to the mapping cone of the morphism
  \[  \Sigma^\infty_+  C\times_\alpha(\bL(V,-)\times Z)\ \to \ 
     \Sigma^\infty_+ C\times_\alpha \bL(V,-) \]
   that collapses $Z$ to a point, and similarly for $V\oplus W$ instead of $V$.

   Now we let $\beta:K\to C$ be a continuous homomorphism from another compact Lie group,
   and we restrict actions along $\beta$.
   The two mapping cones give rise to long exact sequences
   of $K$-equivariant homotopy groups,
   see for example \cite[Proposition 3.1.36]{schwede:global}.
   The various morphisms derived from $\rho_{V,W}$ feature in
   a commutative diagram relating the two long exact sequences.
   The five lemma thus shows that the morphism 
   \[ \beta^*(\Sigma^\infty C_+\sm_{\alpha}((\rho_{V,W})_+\sm Z^\diamond))\]    
   is a $\upi_*$-isomorphism of orthogonal $K$-spectra. This finishes the proof.
\end{proof}
  
The reduced suspension spectrum of the $C$-global Thom space $(B_{\gl}\alpha)^U$
is an orthogonal $C$-spectrum,
and it comes with a tautological $G$-equivariant homotopy class
\begin{equation}\label{eq:tautological_class}
  e_{\alpha,U,V} \ \in \ \pi^G_U\left(\alpha^*(\Sigma^\infty (B_{\gl}\alpha)^U)\right) \ ,  
\end{equation}
defined as the class of the $G$-map
\begin{align*}
  S^{V\oplus U} \
  &\to \ S^V\sm \alpha^*(C_+\sm_\alpha(\bL(V,V)_+\sm S^U))\ = \
    \left(\alpha^*(\Sigma^\infty_+ (B_{\gl}\alpha)^U )\right)(V) \\
  (v,u)\ &\longmapsto \qquad v\sm [1\sm \Id_V\sm u]\ .
\end{align*}
The following representability theorem is a $C$-global generalization of 
\cite[Theorem 4.4.3]{schwede:global},
which is the special case where the group $C$ and the representation $U$ are trivial.

\begin{theorem}\label{thm:Bgl alpha represents}
  Let $\alpha:G\to C$ be a continuous homomorphism from a compact Lie group
  to a topological group, and let $U$ be a $G$-representation.
  \begin{enumerate}[\em (i)]
  \item
    For every orthogonal $C$-spectrum $Y$, the evaluation homomorphism
    \[  \GH_C(\Sigma^\infty (B_{\gl}\alpha)^U ,Y) \   \to  \ \pi_U^G(\alpha^* Y) \ , \quad
      f \ \longmapsto \ f_*(e_{\alpha,U,V})\]
    is an isomorphism.
  \item The orthogonal $C$-spectrum $\Sigma^\infty_+ B_{\gl}\alpha$
    is a compact object in the triangulated category $\GH_C$.
  \end{enumerate}
\end{theorem}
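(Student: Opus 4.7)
The plan is to adapt the proof of \cite[Theorem 4.4.3]{schwede:global} to the $C$-global setting; the extrinsic $C$-action plays the role of bookkeeping throughout, while the homotopical heavy lifting still rests on restricting to compact Lie groups via continuous homomorphisms to $C$. The strategy for (i) is to exhibit a natural inverse to the evaluation map.

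Given a class $x \in \pi_U^G(\alpha^* Y)$, I would first represent it by a $G$-equivariant based map $f : S^{V\oplus U}\to Y(V)$, possibly after enlarging $V$ to a larger faithful $G$-representation (legal by Theorem \ref{thm:rho2C-global}). The data $(f,V)$ together with the $C$-action on $Y$ and the structure maps of $Y$ along linear isometric embeddings canonically extend to a morphism of orthogonal $C$-spectra $\Sigma^\infty (B_{\gl}\alpha)^U \to Y$ at the pointset level: at level $W$ one sends $[c,\varphi]\sm u\sm w$, with decomposition $w=\varphi(v)+w'$ and $w'\in W-\varphi(V)$, to the $c$-translate of the image of $f(v,u)\sm w'$ under the structure map $Y(V\oplus (W-\varphi(V)))\to Y(W)$ coming from the isomorphism $V\oplus (W-\varphi(V))\iso W$ induced by $\varphi$. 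The $G$-quotient is well-defined because $f$ is $G$-equivariant and the $G$-action on $Y$ factors through $\alpha$.

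Next I would check that this construction descends to $\GH_C$ and inverts the evaluation map. Independence of the representative of $x$ -- both the map $f$ and the underlying faithful $G$-representation $V$ -- reduces to Proposition \ref{prop:suspension is homotopical} and Theorem \ref{thm:rho2C-global}, together with a cofinality argument in the poset of $G$-representations. That the construction inverts evaluation is immediate from the definition of $e_{\alpha,U,V}$ in \eqref{eq:tautological_class}: chasing through the identifications, the morphism built from $f$ sends $e_{\alpha,U,V}$ back to the class of $f$ in $\pi_U^G(\alpha^* Y)$.

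For part (ii), I would apply part (i) with $U=0$ to reduce compactness of $\Sigma^\infty_+ B_{\gl}\alpha$ to the statement that $\pi_0^G\circ\alpha^*$ sends coproducts in $\GH_C$ to direct sums of abelian groups. Coproducts in $\GH_C$ are modeled by wedges of orthogonal $C$-spectra (Theorem \ref{thm:cofibration structure}(ii)); the restriction $\alpha^*$ obviously preserves wedges; and $\pi_0^G$ on orthogonal $G$-spectra commutes with wedges, a consequence of compactness of the $G$-equivariant sphere spectrum in the genuine $G$-equivariant stable homotopy category. The main obstacle throughout is the well-definedness verification in Step 1: one must show that distinct choices of representative for the same class in $\pi_U^G(\alpha^* Y)$ yield equal morphisms in $\GH_C$, and the independence results for $B_{\gl}\alpha$ provided by Proposition \ref{prop:Bgl independence} and Theorem \ref{thm:rho2C-global} are the decisive ingredients here.
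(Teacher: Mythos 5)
Your plan for part~(i) matches the paper's in its key ingredients: represent a class by a $G$-map into a level of $Y$, take the adjoint morphism of orthogonal $C$-spectra, and use Theorem~\ref{thm:rho2C-global} to invert the stabilization $\rho_{V,W}$ in $\GH_C$. Your part~(ii) is essentially identical to the paper's argument. However, there is a real gap on the injectivity side of~(i). You phrase the strategy as ``exhibit an inverse~$\Phi$'' to evaluation, and the well-definedness argument you highlight (independence of the representative of $x\in\pi_U^G(\alpha^*Y)$, via Theorem~\ref{thm:rho2C-global} and cofinality) together with the chase showing $\mathrm{ev}\circ\Phi=\mathrm{id}$ yields only that evaluation is \emph{surjective}. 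To conclude bijectivity you must also verify $\Phi\circ\mathrm{ev}=\mathrm{id}$, equivalently that evaluation is injective, and this requires understanding an arbitrary morphism $\omega\in\GH_C(\Sigma^\infty(B_{\gl}\alpha)^U,Y)$ with $\omega_*(e_{\alpha,U,V})=0$. The paper's argument here is genuinely different in flavor from well-definedness: it invokes the calculus of left fractions for the cofibration structure on orthogonal $C$-spectra to reduce to the case $\omega=\gamma(f)$ for an actual morphism $f$ of orthogonal $C$-spectra, then extracts a $G$-equivariant null-homotopy of the evaluated class, adjoins it to a morphism out of $\Sigma^\infty C\ltimes_\alpha(\bL(V\oplus W,-)_+\sm S^U)\sm[0,1]$, and uses that this source is a zero object in $\GH_C$ together with the invertibility of $\Sigma^\infty C\ltimes_\alpha((\rho_{V,W})_+\sm S^U)$ to force $\gamma(f)=0$. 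None of Proposition~\ref{prop:suspension is homotopical}, Theorem~\ref{thm:rho2C-global}, or a cofinality argument in the poset of representations does this on its own; you need the left-fraction description of $\GH_C$-morphisms from the cofibration category structure established in Theorem~\ref{thm:cofibration structure}.
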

\begin{proof}
  (i)
  To show surjectivity we represent any given class $y\in\pi_U^G(\alpha^*Y)$
  by a continuous based $G$-map $f:S^{V\oplus W\oplus U}\to (\alpha^*Y)(V\oplus W)$,
  for some $G$-representation $W$.
  Adjoint to $f$ is a morphism of orthogonal $C$-spectra
  \[ f^\sharp \ : \ \Sigma^\infty C_+\sm_\alpha (\bL(V\oplus W,-)_+\sm S^U)\ \to \ Y\ . \]
  This morphism satisfies
  \[  f^\sharp_*(e_{\alpha,U,V\oplus W}) \ = \ y \ ,    \]
  by design.
  The morphism $\Sigma^\infty C_+\sm_\alpha((\rho_{V,W})_+\sm S^U)$ is a $C$-global equivalence
  by Theorem \ref{thm:rho2C-global}, so it becomes invertible in the $C$-global stable homotopy category.
  So we obtain a morphism in $\GH_C$
  \[ \gamma(f^\sharp)\circ \gamma(\Sigma^\infty C_+\sm_\alpha((\rho_{V,W})_+\sm S^U))^{-1}\ : \
    \Sigma^\infty (B_{\gl}\alpha)^U = \Sigma^\infty C_+\sm_\alpha (\bL(V,-)_+\sm S^U) \ \to \ Y \ . \]
  The morphism $\Sigma^\infty C_+\sm_\alpha((\rho_{V,W})_+\sm S^U)$ sends
  the tautological class $e_{\alpha,U,V\oplus W}$ to the tautological class $e_{\alpha,U,V}$.
  So we deduce the relation
\[ 
 \left( \gamma(f^\sharp)\circ \gamma(\Sigma^\infty C_+\sm_\alpha((\rho_{V,W})_+\sm S^U))^{-1}\right)_*(e_{\alpha,U,V})\
   = \ \gamma(f^\sharp)_*(e_{\alpha,U,V\oplus W})\ = \ y\ .
 \]
  This proves surjectivity of the evaluation homomorphism.

  For injectivity we consider a morphism $\omega:\Sigma^\infty (B_{\gl}\alpha)^U\to Y$
  in $\GH_C$ such that $\omega_*(e_{\alpha,U,V})=0$.
  The calculus of fractions lets us write $\omega = \gamma(s)^{-1}\circ\gamma(f)$
  for two morphisms of orthogonal $C$-spectra $f:\Sigma^\infty (B_{\gl}\alpha)^U\to Z$
  and $s:Y\to Z$ such that $s$ is a $C$-global equivalence.
  Then $\gamma(f)_*(e_{\alpha,U,V})=\gamma(s)_*(\omega_*(e_{\alpha,U,V}))=0$.
  So we can assume without loss of generality that the original morphism is
  of the form $\omega=\gamma(f)$ for a morphism of orthogonal $C$-spectra
  $f:\Sigma^\infty (B_{\gl}\alpha)^U\to Y$. 
  The class $\gamma(f)_*(e_{\alpha,U,V})$ is represented by the $G$-map
  \[  S^{V\oplus U} \ \to \ Y(V)\ , \quad (v,u)\ \longmapsto \ f(V)(v\sm[1\sm\Id_V\sm u])\ .\]
  Since this class is trivial, there is a $G$-representation $W$ such that the stabilization
  \begin{equation}\label{eq:some other composite}    
    S^{V\oplus W\oplus U} \ \to \  Y(V\oplus W)\ , \quad
    (v,w,u)\ \longmapsto \ f(V\oplus W)((v,w)\sm[1\sm i_V\sm u])
     \end{equation}
  is $G$-equivariantly based null-homotopic.
  We choose a null-homotopy that witnesses this fact 
  and adjoint it to a morphism of orthogonal $C$-spectra
  \[  H \ : \ 
        \Sigma^\infty C_+\sm_\alpha (\bL(V\oplus W,-)_+\sm S^U)\sm [0,1]
 \ \to \ Y \ ; \]
  here the unit interval $[0,1]$ is based at 0, and the restriction of $H$ to the point 1
  is adjoint to \eqref{eq:some other composite}.
  We arrive at a commutative diagram in $\spec_C$:
  \[ \xymatrix@C=15mm{ \Sigma^\infty C_+\sm_\alpha (\bL(V,-)_+\sm S^U) \ar@/^1pc/[dr]^-f & \\
      \Sigma^\infty C_+\sm_\alpha (\bL(V\oplus W,-)_+\sm S^U) \ar[r] \ar[d]_{-\sm 1}
      \ar[u]^{\Sigma^\infty C_+\sm_\alpha((\rho_{V,W})_+\sm S^U)}_\sim & Y\\
       \Sigma^\infty C_+\sm_\alpha (\bL(V,-)_+\sm S^U)\sm [0,1] \ar@/_1pc/[ur]_-H& }\]
  Since the upper left morphism becomes invertible in the $C$-global stable homotopy category,
  and because the lower left spectrum becomes a zero object in $\GH_C$, this proves that the
  image of $f$ in $\GH_C$ is the zero morphism.

  (ii)
  By Theorem \ref{thm:cofibration structure} (ii),
  the wedge of any family of orthogonal $C$-spectra is a coproduct in $\GH_C$. 
  The vertical maps in the commutative square
  \[\xymatrix{  {\bigoplus_{i\in I}} \GH_C( \Sigma^\infty_+ B_{\gl}\alpha,\, X_i ) \ar[r]\ar[d] &
      \GH_C( \Sigma^\infty_+ B_{\gl}\alpha,\, \bigoplus_{i\in I}X_i ) \ar[d] \\
      {\bigoplus_{i\in I}} \pi^G_0(\alpha^*(X_i)) \ar[r] &
      \pi_0^G\left( \bigvee_{i\in I}\alpha^*(X_i)\right) }\]
  are evaluation at the tautological class,
  and hence isomorphisms by part (i).
  The lower horizontal map is an isomorphism, see for example \cite[Corollary 3.1.37]{schwede:global};
  so the upper horizontal map is an isomorphism, too.
  This shows that $\Sigma^\infty_+ B_{\gl}\alpha$ is compact 
  as an object of the triangulated category $\GH_C$.
\end{proof}

In the special case where the $G$-representation $U$ is trivial,
Theorem \ref{thm:Bgl alpha represents} (i) says that
the unreduced suspension spectrum $\Sigma^\infty_+ B_{\gl}\alpha$
represents the functor $\pi_0^G\circ\alpha^*:\GH_C\to\Ab$.
So if $Y$ is an orthogonal $C$-spectrum such that the group
$\GH_C( \Sigma^\infty_+ (B_{\gl}\alpha)[k],\, Y)$ is trivial for every
continuous homomorphism $\alpha:G\to C$ from a compact Lie group $G$
and all integers $k$, then $Y$ is $C$-globally equivalent
to the trivial orthogonal $C$-spectrum.
So $Y$ is a zero object in $\GH_C$. This proves the following result.

\begin{cor}\label{cor:compactly generated}
  Let $C$ be a topological group.
  As $\alpha:G\to C$ varies over a set of representatives of the isomorphism classes
  of continuous homomorphisms from compact Lie groups to $C$,
  the suspension spectra $\Sigma^\infty_+ B_{\gl}\alpha$ form a set of compact weak generators 
  for the triangulated $C$-global stable homotopy category $\GH_C$.
  In particular, the $C$-global stable homotopy category is compactly generated.
\end{cor}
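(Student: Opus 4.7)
The plan is a direct reduction to the representability theorem
(Theorem \ref{thm:Bgl alpha represents}), which has already done essentially all the work.
There are two conditions to verify in the definition of a compactly generated triangulated category:
that each proposed generator $\Sigma^\infty_+ B_{\gl}\alpha$ is compact in $\GH_C$,
and that collectively these objects detect zero objects.

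Compactness of each $\Sigma^\infty_+ B_{\gl}\alpha$ is immediate:
apply Theorem \ref{thm:Bgl alpha represents} (ii) with the $G$-representation $U = 0$,
noting that $(B_{\gl}\alpha)^{0}$ is then canonically identified with $(B_{\gl}\alpha)_+$.

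For the weak generation property I would let $Y$ be an orthogonal $C$-spectrum
for which
\[ \GH_C(\Sigma^\infty_+ B_{\gl}\alpha\,[k],\, Y) \ = \ 0 \]
for every continuous homomorphism $\alpha : G \to C$ from a compact Lie group and every $k \in \mathbb Z$.
The shift $[k]$ is given by the $k$-fold iterate of the suspension equivalence $\Ho(-\sm S^1)$
on $\GH_C$, and smashing with $S^k$ converts the representing group
$\pi_0^G(\alpha^*(-))$ into $\pi_k^G(\alpha^*(-))$ via the standard
suspension/desuspension isomorphism
(see e.g.\ \cite[Proposition 3.1.25]{schwede:global}, which was cited in the proof of
Theorem \ref{thm:cofibration structure}).
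Combining this with the representability isomorphism of
Theorem \ref{thm:Bgl alpha represents} (i), applied with $U = 0$, yields
\[ \GH_C(\Sigma^\infty_+ B_{\gl}\alpha\,[k],\, Y)\ \iso\ \pi_k^G(\alpha^* Y)\ .\]
The hypothesis therefore forces $\pi_k^G(\alpha^* Y) = 0$ for all such $\alpha$ and all integers $k$,
which by Definition \ref{def:stable C-global equivalence} is precisely the assertion that
the unique morphism $0 \to Y$ is a $C$-global equivalence; hence $Y$ is a zero object of $\GH_C$.
This establishes both compactness and detection of zero objects, and therefore proves
compact generation.

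There is no genuine obstacle here: the only subtlety is bookkeeping the identification
of the shift functor $[k]$ in the triangulated category with the representation-graded
homotopy group $\pi_k^G(\alpha^*(-))$ through smash products with $S^k$,
and both the representability theorem and the triangulated structure on $\GH_C$
(Theorem \ref{thm:cofibration structure}) have already been proved.
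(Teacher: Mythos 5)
Your proof is correct and follows essentially the same route as the paper: compactness via Theorem~\ref{thm:Bgl alpha represents}~(ii) with $U=0$, and weak generation via the representability isomorphism of Theorem~\ref{thm:Bgl alpha represents}~(i), using the identification of shifts with $\pi_k^G\circ\alpha^*$ and the definition of $C$-global equivalence to conclude $Y$ is a zero object. The only difference is that you spell out the bookkeeping of the shift functor $[k]$ in slightly more detail; the paper leaves this step implicit.
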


\begin{rk}[The $\infty$-category of $C$-global spectra]
  We let $C$ be a topological group.
  The triangulated $C$-global stable homotopy category is only the shadow of a more
  refined structure, namely an underlying compactly generated stable $\infty$-category.
  We define the {\em $\infty$-category of $C$-global spectra} as the $\infty$-categorical localization of
  the 1-category of orthogonal $C$-spectra at the class of $C$-global equivalences.
  For example, the quasicategory of frames
  in the sense of Szumi{\l}o \cite[Section 2]{szumilo:frames} associated to the cofibration structure
  of Theorem \ref{thm:cofibration structure} on the category of orthogonal $C$-spectra
  is a particular construction.  
  This quasicategory is cocomplete by \cite[Theorem 2.3]{szumilo:frames}.
  Stability of an $\infty$-category is detected by the homotopy category,
  see \cite[Corollary 1.4.2.27]{lurie:higher_algebra};
  so the $\infty$-category of $C$-global spectra is stable.
  Similarly, the property of a stable $\infty$-category to be compactly generated
  is detected by the homotopy category, compare \cite[Remark 1.4.4.3]{lurie:higher_algebra}.
  So the stable $\infty$-category of $C$-global spectra is compactly generated;
  in particular, this stable $\infty$-category is also presentable and complete.

  Suppose that $C$ is a {\em Lie group} (not necessarily compact);
  for example, $C$ could be an infinite discrete group.
  Then the $C$-global equivalences also take part in several model category structures on
  the category of orthogonal $C$-spectra.
  The essential ingredients for the construction of the model structure
  are a synthesis of the arguments needed in the special case of the trivial group, i.e.,
  the global model structure of orthogonal spectra \cite[Theorem 4.3.18]{schwede:global},
  and the arguments used in \cite[Section 1.2]{dhlps} to set up the {\em proper stable homotopy theory}
  for non-compact Lie groups. Since we don't need any model structures
  for the purposes of this paper, I won't dwell on this any further.
\end{rk}

\subsection{Global classifying spaces by complex and quaternion isometric embeddings}

As we explain in detail in Construction \ref{con:Bgl of alpha},
the global classifying $C$-space of a continuous homomorphism $\alpha:G\to C$
from a compact Lie group uses real Stiefel manifolds, i.e., spaces of $\mR$-linear isometric
embeddings from a faithful orthogonal $G$-representation.
In our applications to stable splittings of $\bU/m$ and $\bSp/m$,
the naturally occurring objects are {\em complex} and {\em quaternionic} Stiefel manifolds.
We will thus need to know that we can also use spaces of $\mC$-linear or $\mH$-linear isometric
embeddings from a faithful unitary or symplectic $G$-representation
to define global classifying spaces.
Moreover, we want to keep track of the natural symmetries, parameterized by the groups
of $\mR$-algebra automorphisms of $\mC$ and $\mH$.
In this subsection we explain the connection in detail.
A precursor of the results in this subsection already occurs in \cite[Proposition 1.3.11]{schwede:global},
which treats the complex case without any mentioning of the Galois group $G(\mC)$.
The key ingredients are already present in the proof of the precursor;
our main work here is to carefully adapt the arguments from $\mC$ to $\mH$,
while incorporating an augmentation to the Galois group.

\medskip

In the rest of this section, we let $\mK$ be one of the skew-fields $\mC$ or $\mH$.
The arguments also work for $\mK=\mR$, but then they are either tautological, or already well-known
and explicitly stated in \cite{schwede:global}.
As before we write $G(\mK)=\Aut_\mR(\mK)$ for the `Galois group' of $\mK$,
i.e., the compact Lie group of $\mR$-algebra automorphisms.
Then $G(\mC)$ is discrete of order 2, and $G(\mH)$ is abstractly isomorphic to $S O(3)$.

\begin{defn}\label{def:G,epsilon-rep}
  We let $G$ be a compact Lie group and $\epsilon:G\to G(\mK)$ a continuous homomorphism.
  A {\em $(G,\epsilon)$-representation} is a $\mK$-inner product space $W$
  endowed with a continuous $\mR$-linear $G$-action such that
  \[ 
    g\cdot (x \lambda)\ = \ (g x)\cdot \epsilon(g)(\lambda)  \text{\qquad and\qquad} 
    [g x, g y]\ = \ \epsilon(g)( [x,y] )
  \]
  for all  $g\in G$, $\lambda\in \mK$ and $x,y\in W$. 
\end{defn}

\begin{rk}[Extended isometry group]
  The {\em extended isometry group} $\tilde I(W)$ of a $\mK$-inner product space $W$
  is the group of pairs $(A,\tau)$ consisting of an $\mR$-linear automorphism
  $A:W\to W$ and a Galois automorphism $\tau\in G(\mK)$ such that
  \begin{itemize}
  \item the morphism $A$ is $\tau$-semilinear, i.e.,
    $A(x\lambda)=A(x)\cdot\tau(\lambda)$ for all $x\in W$ and $\lambda\in\mK$, and
  \item the relation
    \begin{equation}\label{eq:twisted isometry}
      [A x, A y]\ = \ \tau[x,y] 
    \end{equation}
    holds for all $x,y\in W$.
  \end{itemize}
  If $W$ is non-zero, then the Galois automorphism $\tau$ is actually determined by $A$;
  nevertheless, it is convenient to explicitly keep track of $\tau$.
  Composition in $\tilde I(W)$ is componentwise, i.e,
  $(A,\tau)\cdot (A',\tau')=(A A',\tau \tau')$.
  The relation \eqref{eq:twisted isometry}
  in particular implies that for $(A,\tau)\in \tilde I(W)$,
  the map $A$ is an isometry of the  underlying euclidean vector space $u W$,
  i.e., the underlying $\mR$-vector space of $W$ equipped with the 
  euclidean inner product $\td{x,y}=\Re[x,y]$.
  So $\tilde I(W)$ is a closed subgroup of $O(u W)\times G(\mK)$,
  and hence a compact Lie group.
  The projection to the second factor $p:\tilde I(W)\to G(\mK)$
  is a continuous epimorphism whose kernel is isomorphic to $I(W)$.
  Moreover, $\tilde I(W)$ is isomorphic to a semidirect product $I(W)\rtimes G(\mK)$,
  but not in a natural way.
    
  The reader might want to convince themself that a $(G,\epsilon)$-representation
  with underlying inner product space $W$ can equivalently be specified by
  a continuous homomorphism $\rho: G\to \tilde I(W)$
  to the extended isometry group that covers the augmentations,
  i.e., such that $p\circ \rho=\epsilon$.
\end{rk}

For $\mK=\mC$, every $(G,\epsilon)$-representation
in particular has an underlying orthogonal representation
of the compact Lie group $G$;
and it has an underlying unitary representation of the kernel of $\epsilon:G\to G(\mC)$.
Compact Lie groups augmented to the Galois group of $\mC$ over $\mR$
and their twisted representations
are studied as `augmented compact Lie groups' by Karoubi \cite{karoubi:clifford},
and prior to that (in special cases) by Atiyah and Segal \cite[Section 6]{atiyah-segal:completion}.
For $\mK=\mH$, every $(G,\epsilon)$-representation
in particular has an underlying orthogonal representation
of the compact Lie group $G$;
and it has an underlying symplectic representation of the kernel of $\epsilon:G\to G(\mH)$.
I am not aware of a reference where compact Lie groups augmented to $G(\mH)$ and their
`twisted' representations are systematically investigated.

\medskip

We write $V_\mK=V\tensor_\mR\mK$ for the scalar extension from $\mR$ to $\mK$
of a euclidean inner product space $V$, with $\mK$-inner product $[-,-]$
obtained from the euclidean inner product $\td{-,-}$ on $V$ by
\[ [x\tensor\lambda,y\tensor\mu]\ = \ \bar\lambda \cdot \td{x,y}\cdot \mu      \]
for $x,y\in V$ and $\lambda,\mu\in\mK$.
The {\em underlying euclidean inner product space} $u W$
of a $\mK$-inner product space $W$ is the underlying $\mR$-vector space endowed with the
euclidean inner product
\[ \td{x,y}\ = \ \Re[x,y]\ , \]
the real part of the $\mK$-valued inner product.

\begin{construction}
  We let $\mK$ be $\mC$ or $\mH$, and we let $W$ be a $\mK$-inner product space.
  For several arguments we shall need a specific natural $\mK$-linear isometric embedding
  \begin{align}\label{eq:define zeta}
    \zeta \ &: \ W \ \to \ (u W)_\mK \ = \ (u W)\tensor_\mR\mK \ .
  \end{align}
  To define it, we separate the two cases.
  For $\mK=\mC$, the map $\zeta$ is defined as
  \[ 
    \zeta \ : \ W \ \to \ (u W)_\mC \ = \ (u W)\tensor_\mR\mC  \text{\qquad by\qquad}
    \zeta(x) \ = \ 1/\sqrt{2}\cdot(x\tensor 1 - x i \tensor i) \ .
  \]
  For $\mK=\mH$, it is given by
  \begin{align*}
    \zeta \ : \ W \ &\to \ (u W)_\mH \ = \ (u W)\tensor_\mR\mH\ ,\\
    \zeta(x) \ &= \ 1/2\cdot(x\tensor 1 - x i \tensor i - x j\tensor j -x k\tensor k)\ .
  \end{align*}
\end{construction}

\begin{prop}
  Let $\epsilon:G\to G(\mK)$ be a continuous homomorphism,
  where $\mK$ is $\mC$ or $\mH$. Let $W$ be a $(G,\epsilon)$-representation.
  Then the map $\zeta$ is a $\mK$-linear isometric embedding
  that satisfies
  \begin{equation} \label{eq:j_twisted_equivariant}
    \zeta\circ l_g \ = \ (l_g\tensor \epsilon(g))\circ\zeta     
  \end{equation}
  for all $g\in G$, where $l_g:W\to W$ is left multiplication by $g$.
\end{prop}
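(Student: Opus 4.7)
The proof consists of three verifications: $\mK$-linearity of $\zeta$, preservation of inner products, and the equivariance \eqref{eq:j_twisted_equivariant}. Both definitions of $\zeta$ admit the uniform description
\[ \zeta(x) \ = \ \frac{1}{\sqrt{d}}\, {\sum}_{s=1}^d\, x \bar e_s \tensor e_s \ , \]
where $d=\dim_\mR \mK$ and $\{e_s\}_{s=1}^d$ is the standard orthonormal $\mR$-basis of $\mK$ (namely $\{1,i\}$ for $\mC$ and $\{1,i,j,k\}$ for $\mH$). My first step is to observe that this expression is independent of the choice of orthonormal $\mR$-basis of $\mK$: if $\{f_s\}$ is any other such basis, related by $f_s=\sum_t A_{ts} e_t$ for an orthogonal matrix $A\in O(d,\mR)$, then the orthogonality relation $\sum_s A_{ts}A_{rs}=\delta_{tr}$, together with $\mR$-linearity of right multiplication and of conjugation, immediately yields $\sum_s x\bar f_s\tensor f_s = \sum_t x\bar e_t\tensor e_t$.

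For the $\mK$-linearity, since both $\zeta(x\mu)$ and $\zeta(x)\mu$ depend $\mR$-linearly on $\mu$, it suffices to verify the equality $\zeta(x\mu)=\zeta(x)\mu$ when $\mu$ runs through the basis $\{e_t\}$; this reduces to the identity
\[ {\sum}_s\, (e_t \bar e_s)\tensor e_s\ =\ {\sum}_s\, \bar e_s \tensor (e_s e_t) \]
in $\mK\tensor_\mR \mK$, which I would check by expanding both sides via the multiplication table of $\mK$. For the isometry property, sesquilinearity of the inner product on $(uW)_\mK$ combined with the relation $[x\bar e_s, y\bar e_t]=e_s[x,y]\bar e_t$ gives
\[ [\zeta(x),\zeta(y)] \ =\ \frac{1}{d}\,{\sum}_{s,t}\, \bar e_s\cdot\Re(e_s[x,y]\bar e_t)\cdot e_t\ , \]
and I would verify that this double sum equals $[x,y]$ by reducing (via $\mR$-linearity in $[x,y]$) to the case where $[x,y]$ is one of the basis elements $e_r$ and then computing directly. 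The isometry property immediately implies that $\zeta$ is injective.

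Finally, for the equivariance I would compute
\[ (l_g\tensor \epsilon(g))\zeta(x)\ =\ \frac{1}{\sqrt{d}}\,{\sum}_s\, g(x\bar e_s) \tensor \epsilon(g)(e_s)\ =\ \frac{1}{\sqrt{d}}\,{\sum}_s\, (gx)\overline{\epsilon(g)(e_s)} \tensor \epsilon(g)(e_s)\ , \]
where the second equality uses the defining relation $g(x\bar e_s)=(gx)\epsilon(g)(\bar e_s)$ of a $(G,\epsilon)$-representation, together with the standard fact that $\mR$-algebra automorphisms of $\mK$ commute with conjugation (they preserve the real part, since $\Re$ is one half of the reduced trace, which is invariant under $\mR$-algebra automorphisms). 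Because $\epsilon(g)$ is an isometry of the euclidean inner product $\td{\lambda,\mu}=\Re(\bar\lambda\mu)$ on $\mK$, the family $\{\epsilon(g)(e_s)\}_s$ is another orthonormal $\mR$-basis of $\mK$, and the basis-independence established in the first paragraph identifies the right-hand side with $\zeta(gx)$. The main technical load is the case-by-case verification of the $\mK$-linearity and isometry identities for $\mK=\mH$; no conceptual obstacle arises.
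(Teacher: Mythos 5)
Your proof is correct and takes essentially the same approach as the paper's. The uniform formula $\zeta(x)=\tfrac{1}{\sqrt d}\sum_s x\bar e_s\tensor e_s$ and the explicit basis-independence observation are a clean way to package the same idea the paper uses (invariance of $\sum_s e_s\tensor e_s$ under the diagonal orthogonal action, combined with the $G(\mK)$-fixed element $1\tensor 1$), and your polarization down to $[\zeta(x),\zeta(y)]$ and the identity in $\mK\tensor_\mR\mK$ for linearity are minor variants of the paper's diagonal computation $[\zeta(x),\zeta(x)]$ and its shortcut of checking commutation with right multiplication by the algebra generators $i$ (and $j$).
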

\begin{proof}
  We give the argument for $\mK=\mH$. The proof in the complex case is similar, but easier.
  The map $\zeta$ is clearly $\mR$-linear, and it commutes with right multiplication
  by the quaternion scalars $i$ and $j$; so $\zeta$ is in fact $\mH$-linear.
  To show that the map $\zeta$ is isometric we observe that  
  \[ 
    \left[x \lambda\tensor\bar\lambda,x \nu \tensor\bar\nu\right]
    = \ \lambda\cdot \td{x \lambda,x \nu}\cdot\bar\nu\
    = \ \lambda\cdot \Re\left(\bar\lambda [x,x]\nu\right)\cdot\bar\nu\
    = \ [x,x]\cdot \Re\left(\bar\lambda \nu\right)\cdot\lambda \bar\nu
  \]
  for all $x\in W$ and $\lambda,\nu\in\mH$.
  For $\lambda,\nu\in\{1,i,j,k\}$ with $\lambda\ne\nu$ we have $\Re\left(\bar\lambda \nu\right)=0$.
  So
  \begin{align*}
    [\zeta(x),\zeta(x)]\
    = \  1/4\cdot [x,x]\cdot \sum_{\lambda\in\{1,i,j,k\}} \Re(\bar\lambda \lambda)\cdot\lambda\bar\lambda\
    = \ [x,x]\ .
  \end{align*}
  It remains to prove the relation \eqref{eq:j_twisted_equivariant}.
  The tautological action of $G(\mH)$ on $\mH$ is isometric for the euclidean inner product
  $\td{x,y}=\Re(\bar x y)$.
  Since $(1,i,j,k)$ is an orthonormal $\mR$-basis for this inner product,
  the element $1\tensor 1 + i\tensor i + j\tensor j + k\tensor k$ is fixed
  under the diagonal $G(\mH)$-action (and could have been specified by any other orthonormal basis).
  Since $1\tensor 1$ is evidently $G(\mH)$-fixed, too, the element
  \[ 1\tensor 1 - i\tensor i - j\tensor j - k\tensor k \]
  is fixed under the diagonal $G(\mH)$-action. So we conclude that
  \begin{align*}
    \zeta(g x)\
    &= \  1/2\cdot(g x)\cdot (1\tensor 1 - i \tensor i -  j\tensor j - k\tensor k) \\
    &= \  1/2\cdot(g x)\cdot (1\tensor 1 - \epsilon(g)(i)\tensor \epsilon(g)(i) - \epsilon(g)(j)\tensor\epsilon(g)(j)- \epsilon(g)(k)\tensor\epsilon(g)(k)) \\
    &= \  1/2\cdot((g x)\tensor 1 - g (x i) \tensor \epsilon(g)(i) - g(x j)\tensor\epsilon(g)(j)- g(x k)\tensor\epsilon(g)(k)) \
    = \  (l_g\tensor\epsilon(g))(\zeta(x))
  \end{align*}
  for all $g\in G$ and $x\in W$.
\end{proof}

A basic fact in the global homotopy theory of orthogonal spaces is that
for two compact Lie groups $K$ and $G$, and for any faithful $G$-representation $V$,
the infinite Stiefel manifold $\bL(V,\Uc_K)$ is a universal $(K\times G)$-space
for the family of graph subgroups, compare \cite[Proposition 1.1.26]{schwede:global}.
This property relies on the fact that for every finite-dimensional $K$-representation $W$,
the space $\bL^K(W,\Uc_K)$ of $K$-equivariant $\mR$-linear isometric embeddings is contractible, compare
\cite[Propositions 1.1.21]{schwede:global}.
Both statement have suitable analogs in the complex and quaternionic situations,
with easily adapted proofs.
Since I do not know of references,
I provide proper statements and proofs now.

\begin{prop}\label{prop:quaternion contractible}
  Let  $\Uc_K$ be a complete universe of a compact Lie group $K$.
  Let $\beta:K\to G(\mK)$ be a continuous homomorphism,
  where $\mK$ is either $\mC$ or $\mH$.
  Let $K$ act on $\Uc_K^\mK=\Uc_K\tensor_\mR\mK$ by
  \[ k\cdot(x\tensor\lambda)\ = \ (k x)\tensor\beta(k)(\lambda)\ . \]
  Then for every $(K,\beta)$-representation $W$, the space
  $\bL^{\mK,K}(W,\Uc_K^\mK)$ of $\mK$-linear $K$-equivariant isometric embeddings
  is weakly contractible.
\end{prop}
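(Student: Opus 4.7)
The plan is to adapt the standard contractibility proof for equivariant Stiefel manifolds in complete universes (compare \cite[Proposition 1.1.21]{schwede:global}) to the $\mK$-linear setting. The technical bridge between the $\mK$-linear and the underlying euclidean worlds is the isometric embedding $\zeta:W\to(u W)_\mK$ of \eqref{eq:define zeta}. Observe that $u W$ is an ordinary orthogonal $K$-representation because every $\beta(g)\in G(\mK)$ fixes $1$ and preserves the real parts of $\mK$-valued inner products; one checks similarly that $\Uc_K^\mK$, with the twisted $K$-action $g\cdot(x\tensor\lambda)=(gx)\tensor\beta(g)(\lambda)$ and the $\mK$-inner product \eqref{eq:euclidean2ymplectic}, is a $(K,\beta)$-representation of countably infinite $\mK$-dimension.

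First I would show how completeness of $\Uc_K$ produces $K$-equivariant $\mK$-linear isometric embeddings of $W$ with controlled images. For any finite-dimensional $K$-subrepresentation $V\subset\Uc_K$, the orthogonal complement $\Uc_K-V$ is again a complete $K$-universe, so there exists a $K$-equivariant $\mR$-linear isometric embedding $\varphi:u W\to\Uc_K-V$. Its scalar extension $\varphi\tensor\mK:(u W)_\mK\to\Uc_K^\mK$ is $\mK$-linear, isometric, and intertwines the twisted $K$-actions, and composing with $\zeta$ yields a $\mK$-linear $K$-equivariant isometric embedding
\[ \psi_V\ =\ (\varphi\tensor\mK)\circ\zeta\ :\ W\ \to\ \Uc_K^\mK \]
whose image lies in $(\Uc_K-V)^\mK$ and is therefore orthogonal to $V^\mK$. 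The $K$-equivariance of $\psi_V$ uses \eqref{eq:j_twisted_equivariant} crucially. Taking $V=0$ in particular shows that $\bL^{\mK,K}(W,\Uc_K^\mK)$ is non-empty.

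For weak contractibility I would construct a rotation null-homotopy. Given a continuous map $f:S^n\to\bL^{\mK,K}(W,\Uc_K^\mK)$, the image $f(S^n)$ is compact, so the colimit topology on $\Uc_K^\mK$ along its finite-dimensional $K$-subrepresentations forces $f$ to factor through $\bL^{\mK,K}(W,V^\mK)$ for some finite-dimensional $K$-subrepresentation $V\subset\Uc_K$. Applying the previous step to this $V$ produces a $\psi_V\in\bL^{\mK,K}(W,\Uc_K^\mK)$ whose image is orthogonal to every $f(s)(W)$. I then set
\[ H(t,s)(w)\ =\ \cos(\pi t/2)\cdot f(s)(w)\ +\ \sin(\pi t/2)\cdot \psi_V(w) \]
for $(t,s,w)\in[0,1]\times S^n\times W$. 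Each map $H(t,s):W\to\Uc_K^\mK$ is $\mK$-linear because both summands are, $K$-equivariant because the $K$-action on $\Uc_K^\mK$ commutes with $\mR$-scalar multiplication, and isometric because the orthogonality of the images of $f(s)$ and $\psi_V$ annihilates the cross-terms in $[H(t,s)(w),H(t,s)(w')]$, leaving $\cos^2(\pi t/2)[w,w']+\sin^2(\pi t/2)[w,w']=[w,w']$. Thus $H$ is a null-homotopy from $f$ to the constant map $\psi_V$, proving $\pi_n(\bL^{\mK,K}(W,\Uc_K^\mK))=0$ for every $n\geq 0$. The main technical point is really the bookkeeping of the twisted equivariance, which is exactly what the embedding $\zeta$ was designed to facilitate.
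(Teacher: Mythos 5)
Your proof is correct, and it shares the two essential technical tools with the paper's proof: the $\mK$-linear isometric embedding $\zeta:W\to(uW)_\mK$ of \eqref{eq:define zeta}, which converts completeness of $\Uc_K$ over $\mR$ into available room over $\mK$, and a rotation homotopy between mutually orthogonal embeddings. The structural organization differs, however. The paper first isolates a reusable lemma -- that $\bL^{\mK,K}(W,U\oplus W^\infty)$ is weakly contractible for every $(K,\beta)$-representation $U$, by exhibiting it as a colimit along null-homotopic closed embeddings -- and then identifies $\Uc_K^\mK$ once and for all with a representation of the form $U\oplus W^\infty$, using $\zeta$ to split $(uW)_\mK\iso C\oplus W$ and the isomorphism $\Uc_K\iso\Vc\oplus(uW)^\infty$ coming from completeness. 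You instead argue sphere by sphere: a map $S^n\to\bL^{\mK,K}(W,\Uc_K^\mK)$ factors, by compactness, through some finite stage $\bL^{\mK,K}(W,V_\mK)$, and you produce an embedding $\psi_V$ landing in the $\mK$-scalar extension of the orthogonal complement $\Uc_K-V$ to rotate against. Your version avoids the explicit global decomposition of $\Uc_K^\mK$ at the cost of reconstructing $\psi_V$ for each finite $V$; both approaches ultimately rest on the same fact about compact subsets of colimits along closed embeddings. The one place where you should be more precise is the assertion that $f$ factors through $\bL^{\mK,K}(W,V_\mK)$: this needs the Stiefel manifold $\bL^{\mK,K}(W,\Uc_K^\mK)$ to carry the colimit topology over the closed subspaces $\bL^{\mK,K}(W,V_\mK)$. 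This does hold, using that $W$ is finite-dimensional (so $\Hom_\mK(W,-)$ commutes with these colimits) and that the bonding maps are closed embeddings of compactly generated spaces, but since this is the load-bearing step it deserves to be spelled out rather than attributed to the colimit topology on $\Uc_K^\mK$ alone.
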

\begin{proof}
  We let $U$ be any $(K,\beta)$-representation, possibly of countably infinite dimension.
  Then the homotopy
  \[ H \ : \ [0,1]\times \bL^{\mK,K}(W,U)\ \to \ \bL^{\mK,K}(W,U\oplus W) \ ,\quad
  H(t,\varphi)(w)\ = \ (\sqrt{1-t^2} \cdot \varphi(w),\, t\cdot w) \]
  witnesses the fact that the map
  \[ i_1\circ - \ : \ \bL^{\mK,K}(W,U)\ \to \ \bL^{\mK,K}(W,U\oplus W) \]
  (post-composition with $i_1:U\to U\oplus W)$ is homotopic to a constant map.
  The space
  \[ \bL^{\mK,K}(W,U\oplus W^\infty) \ = \ \colim_{n\geq 0}\, \bL^{\mK,K}(W, U\oplus W^n)  \]
  is the colimit along the post-composition maps with the direct sum embeddings
  $U\oplus W^n\to U\oplus W^{n+1}$.
  Every map in the colimit system is homotopic
  to a constant map, by the previous paragraph. 
  Since the maps are also closed embeddings, the colimit is weakly contractible.

  Now we can prove the proposition.
  Because $\Uc_K$ is a complete $K$-universe, there is a $K$-equivariant $\mR$-linear isometry
  $\Uc_K\iso \Vc\oplus (u W)^\infty$ for some orthogonal $K$-representation $\Vc$
  (typically infinite dimensional).
  The $\mK$-linear $K$-equivariant isometric embedding $\zeta:W\to (u W)\tensor_\mR\mK$
  provides an isomorphism of $(K,\beta)$-representations 
  \[ (u W)\tensor_\mR \mK \ \iso \ C \oplus W\ , \]
  where $C$ is the orthogonal complement of the image of $\zeta$.
  So $\Uc_K^\mK$ is isomorphic to
  \[ \left( \Vc\oplus (u W)^\infty\right)\tensor_\mR \mK \ \iso \
     (\Vc\tensor_\mR\mK) \oplus C^\infty \oplus W^\infty\ .
  \]
  The space $\bL^{\mK,K}(W,\Uc_K^\mK)$ is thus weakly contractible by the first paragraph.
\end{proof}

\begin{construction}
  We let $K$ and $G$ be compact Lie groups,
  and we let $\beta:K\to G(\mK)$ and $\epsilon:G\to G(\mK)$ be two continuous homomorphisms.
  We write
  \[ K\times_{G(\mK)} G \ = \
    \{(k,g)\in K\times G\ : \ \beta(k)=\epsilon(g) \} \]
  for the fiber product over $G(\mK)$.
  We let $W$ be a $(G,\epsilon)$-representation, and we let $\Uc_K$ be a complete $K$-universe.
  As before we write $\Uc_K^\mK =\Uc_K\tensor_\mR \mK$
  for the scalar extension. The group $K\times_{G(\mK)} G$ then acts on the space
  $\bL^\mK(W,\Uc_K^\mK)$ of $\mK$-linear isometric embeddings by
  \begin{equation}\label{eq:k,g_act}
    (k,g)\cdot \varphi \ = \ (l_k\tensor\beta(k))\circ\varphi\circ l_g^{-1}\ ,   
  \end{equation}
  where $l_k:\Uc_K\to\Uc_K$ and $l_g:W\to W$ are the translation maps.
  We exploit here that for $(k,g)\in K\times_{G(\mK)} G$, both maps 
  $l_k\tensor\beta(k)$ and $l_g$ are semilinear for the same $\mR$-algebra automorphism of $\mK$,
  so the composite \eqref{eq:k,g_act} is again $\mK$-linear.
  So altogether the assignment \eqref{eq:k,g_act} defines a continuous action of the compact
  Lie group $K\times_{G(\mK)} G$ on the space $\bL^\mK(W,\Uc_K^\mK)$.
\end{construction}

\begin{prop}\label{prop:universal space}
  Let $\mK$ be $\mC$ or $\mH$.
  Let $\epsilon:G\to G(\mK)$ be a continuous epimorphism from a compact Lie group
  with kernel $G_0$.
  Let $W$ be a faithful $(G,\epsilon)$-representation.
  Then for every continuous homomorphism $\beta:K\to G(\mK)$ from another compact Lie group
  and for every complete $K$-universe $\Uc_K$, the $(K\times_{G(\mK)} G)$-space
  $\bL^\mK(W,\Uc_K^\mK)$ is a universal $(K\times_{G(\mK)} G)$-space for the family of those
  closed subgroups that intersect $1\times G_0$ trivially.
\end{prop}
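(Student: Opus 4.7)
The strategy is to reduce the fixed point calculation to a direct application of Proposition \ref{prop:quaternion contractible}, after first reorganizing the relevant subgroups of $K\times_{G(\mK)} G$ as graphs.

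First I would analyze the subgroup structure. The defining condition of the fiber product forces any element of the form $(1,g)$ to satisfy $\epsilon(g)=1$, so $1\times G_0 = (K\times_{G(\mK)}G)\cap (1\times G)$. Hence for a closed subgroup $H\le K\times_{G(\mK)}G$, the intersection $H\cap(1\times G_0)$ is precisely the kernel of the first projection $\pi_1|_H \colon H\to K$. The condition that $H$ meets $1\times G_0$ trivially is therefore equivalent to $\pi_1|_H$ being injective; in that case $H$ is the graph of a unique continuous homomorphism $\delta\colon L\to G$, where $L=\pi_1(H)$ is a closed subgroup of $K$ and $\epsilon\circ\delta = \beta|_L$.

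Second I would handle the negative case: if $H$ contains an element $(1,g)$ with $1\neq g\in G_0$, then unwinding the action formula \eqref{eq:k,g_act} shows that every $H$-fixed $\varphi\in\bL^\mK(W,\Uc_K^\mK)$ satisfies $\varphi(g\cdot w)=\varphi(w)$ for all $w\in W$. Injectivity of $\varphi$ forces $g$ to act trivially on $W$, contradicting faithfulness of $W$. So $\bL^\mK(W,\Uc_K^\mK)^H$ is empty.

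Third, for a graph subgroup $H=\{(l,\delta(l)):l\in L\}$, I would endow $W$ with the new $L$-action $l*w = \delta(l)\cdot w$; the relation $\epsilon\circ\delta=\beta|_L$ guarantees that this is a $(L,\beta|_L)$-representation structure on $W$. Restricting the $K$-action on $\Uc_K^\mK$ along the inclusion $L\hookrightarrow K$ gives exactly the $(L,\beta|_L)$-structure on $\Uc_K^\mK$ used in Proposition \ref{prop:quaternion contractible}. A direct unwinding of \eqref{eq:k,g_act} then identifies $\bL^\mK(W,\Uc_K^\mK)^H$ with the space $\bL^{\mK,L}(W,\Uc_K^\mK)$ of $L$-equivariant $\mK$-linear isometric embeddings for these actions. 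Since the restriction of the complete $K$-universe $\Uc_K$ to the closed subgroup $L$ is a complete $L$-universe, Proposition \ref{prop:quaternion contractible} applies and yields weak contractibility. The only real obstacle is the bookkeeping to verify that the twisted actions on $W$ and on $\Uc_K^\mK$ match up; no new ideas beyond those of Proposition \ref{prop:quaternion contractible} and the graph-subgroup description are required.
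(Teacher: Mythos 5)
Your fixed-point analysis is correct and matches the paper's argument in all essentials: the empty-case via faithfulness (equivalently, freeness of the precomposition action of $G_0$), the identification of subgroups meeting $1\times G_0$ trivially as graph subgroups of continuous homomorphisms $\delta\colon L\to G$ satisfying $\epsilon\circ\delta=\beta|_L$, and the reduction to Proposition \ref{prop:quaternion contractible} via the restricted universe.

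However, there is a genuine gap: you have only verified the fixed-point conditions, not that $\bL^\mK(W,\Uc_K^\mK)$ is in fact a \emph{universal} $(K\times_{G(\mK)}G)$-space for the family in question. By definition, a universal space for a family must also have the $(K\times_{G(\mK)}G)$-equivariant homotopy type of a cofibrant object (e.g., a $(K\times_{G(\mK)}G)$-CW complex); a space that merely has contractible or empty fixed points for the right subgroups need not be universal, and indeed the later applications in Theorem \ref{thm:zeta_is_A-global} (concluding a map between two such spaces is an equivariant homotopy equivalence) require the cofibrancy. The paper supplies this by adapting the argument of \cite[Proposition 1.1.19]{schwede:global}, which rests on Illman's equivariant triangulation theorem for smooth actions of compact Lie groups. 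You should add this step, noting that $\bL^\mK(W,\Uc_K^\mK)$ is a colimit along closed $(K\times_{G(\mK)}G)$-embeddings of the compact smooth $(K\times_{G(\mK)}G)$-manifolds $\bL^\mK(W,V_\mK)$ for $V\in s(\Uc_K)$, each of which admits an equivariant triangulation by Illman's theorem.
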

\begin{proof}
  Essentially the same argument as in the real situation in \cite[Proposition 1.1.19]{schwede:global},
  which relies on Illman's equivariant triangulation theorem
  for smooth actions of compact Lie groups \cite{illman},
  shows that $\bL^\mK(W,\Uc_K^\mK)$ is cofibrant as a $(K\times_{G(\mK)} G)$-space.
  Since $G_0$ acts faithfully on $W$, the action on $\bL^\mK(W,\Uc_K^\mK)$
  by precomposition is free.
  So every subgroup of $K\times_{G(\mK)} G$
  with fixed points on $\bL^\mK(W,\Uc_K^\mK)$
  must intersect the group $1\times G_0$ trivially.
  Conversely, we now consider a closed subgroup $\Delta$ of
  $K\times_{G(\mK)} G$ such that $\Delta\cap(1\times G_0)=e$.
  Then $\Delta$ is the graph of some continuous homomorphism
  \[ \alpha \ : \ L \ \to \ G\]
  defined on some closed subgroup $L$ of $K$, such that $\epsilon\circ\alpha=\beta|_L$.
  Hence 
   \[ \left( \bL^\mK(W,\Uc_K^\mK) \right)^{\Delta} \ = \
   \bL^{\mK,L}(\alpha^*(W),\Uc_K^\mK) \ , \]
  the space of $\mK$-linear and $L$-equivariant isometric embeddings from $\alpha^*(W)$
  into $\Uc_K^\mK$,
  with $L$-action on the target by
  \[ l\cdot(x\tensor\lambda)\ = \ (l x)\tensor\beta(l)(\lambda)\ . \]
  Because $\Uc_K$ is a complete $K$-universe,
  the underlying $L$-representation is a complete $L$-universe. So the space
  $\bL^{\mK,L}(\alpha^*(W),\Uc_K^\mK)$ is weakly contractible by Proposition \ref{prop:quaternion contractible}.
  This completes the proof.
\end{proof}

\begin{construction}
  Let $\epsilon:G\to G(\mK)$ be a continuous epimorphism from a compact Lie group.
  Let $W$ be a $(G,\epsilon)$-representation.
  For a euclidean inner product space $V$,
  we define a $G$-action on the Stiefel manifold $\bL^\mK(W,V_\mK)$  by
  \[  {^g \varphi} \ = \ (V\tensor\epsilon(g))\circ \varphi\circ l_g^{-1} \ , \]
    where $l_g:W\to W$ is translation by $g\in G$.
  The orbit space
  \[ \bL^\mK(W,V_\mK)/G_0  \]
  by the action of the closed normal subgroup $G_0=\ker(\epsilon)$
  inherits a residual $G(\mK)$-action; for varying $V$, this defines an
  orthogonal $G(\mK)$-space  $\bL^\mK(W,(-)_\mK)/G_0$.
  For example, for the augmentation $\epsilon_k:\tilde I(k)=I(k)\rtimes G(\mK)\to G(\mK)$
  of the extended isometry group of $\mK^k$
  and the tautological $(\tilde I(k),\epsilon_k)$-representation on $\mK^k$,
  the construction specializes to the Grassmannian $\bGr_k^\mK$ from Example \ref{eg:global Thom}.
\end{construction}

We let $\epsilon:G\to G(\mK)$ be a continuous epimorphism,
and we continue to write $G_0=\ker(\epsilon)$.
We will now argue that for every {\em faithful} $(G,\epsilon)$-representation $W$,
the orthogonal $G(\mK)$-space $\bL^\mK(W,(-)_\mK)/G_0$
is a $G(\mK)$-global classifying space,
in the sense of Construction \ref{con:Bgl of alpha},
of the epimorphism $\epsilon:G\to G(\mK)$.
Because $G$ acts faithfully on $W$, we can use the underlying orthogonal $G$-representation $u W$
to construct a $G(\mK)$-global classifying space
\[ B_{\gl}\epsilon\ = \ G(\mK)\times_{\epsilon}\bL(u W,-) \ = \ \bL(u W,-)/G_0 \ .\]
The issue now is to compare this orthogonal $G(\mK)$-space to $\bL^\mK(W,(-)_\mK)/G_0$.
To this end we recall from \eqref{eq:define zeta}
the $\mK$-linear isometric embedding
\[  \zeta \ : \ W \ \to \ (u W)\tensor_\mR\mK \ = \ (u W)_\mK  \ .\]
The enriched Yoneda lemma provides a unique morphism of orthogonal spaces
\[  \zeta^\flat\ : \  \bL(u W,-)\ \to \ \bL^\mK(W,(-)_\mK) \]
whose value at $u W$ takes the identity to $\zeta$.
Since the map $\zeta$ is $G_0$-equivariant, so is the morphism $\zeta^\flat$.
For any $G$-space $Z$, we can thus form the morphism
of orthogonal spaces
\[  \zeta^\flat\times_{G_0}Z \ : \  \bL(u W ,-)\times_{G_0}Z\ \to \
  \bL^\mK(W,(-)_\mK)\times_{G_0}Z\ . \]
The relation \eqref{eq:j_twisted_equivariant} implies that this morphism
is $G(\mK)$-equivariant, and hence a morphism of orthogonal $G(\mK)$-spaces.

\begin{theorem}\label{thm:zeta_is_A-global}
  Let $\epsilon:G\to G(\mK)$ be a continuous epimorphism from a compact Lie group,
  where $\mK$ is either $\mC$ or $\mH$.
  Let $W$ be a faithful $(G,\epsilon)$-representation, and let $Z$ be a $G$-space.
  \begin{enumerate}[\em (i)]
  \item
    The morphism of orthogonal $G(\mK)$-spaces
    $\zeta^\flat\times_{G_0}Z$  is a $G(\mK)$-global equivalence.
  \item 
    The morphism of orthogonal $G(\mK)$-spectra
    \[  \Sigma^\infty \zeta^\flat_+\sm_{G_0} Z^\diamond \ : \
      \Sigma^\infty \bL(u W ,-)_+\sm_{G_0} Z^\diamond\ \to \
      \Sigma^\infty \bL^\mK(W,(-)_\mK)_+\sm_{G_0}Z^\diamond \]
    is a $G(\mK)$-global equivalence, where $Z^\diamond$ denotes the unreduced suspension.    
  \end{enumerate}
\end{theorem}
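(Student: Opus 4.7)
The plan for (i) is to invoke the fixed-point criterion from Proposition \ref{prop:global eq for closed}. Both the source and target of $\zeta^\flat \times_{G_0} Z$ are closed orthogonal $G(\mK)$-spaces, since Stiefel manifold functors take linear isometric embeddings to closed embeddings, and the operations of direct product with the $G$-space $Z$ and passage to $G_0$-orbits preserve closed embeddings. Hence I must show that for every continuous homomorphism $\beta\colon K\to G(\mK)$ from a compact Lie group with a complete $K$-universe $\Uc_K$, the induced map on $\Gamma(\beta)$-fixed points of the underlying $(K\times G(\mK))$-spaces at $\Uc_K$ is a weak equivalence.

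The heart of the argument is that $\zeta^\flat(\Uc_K)\colon \bL(uW,\Uc_K)\to \bL^\mK(W,\Uc_K^\mK)$ is equivariant for the natural action of the fiber product $K\times_{G(\mK)} G$, as follows from the twisted relation \eqref{eq:j_twisted_equivariant} by the same calculation that shows $\zeta^\flat$ descends to a morphism of orthogonal $G(\mK)$-spaces. Proposition \ref{prop:universal space} identifies the target as a universal $(K\times_{G(\mK)} G)$-space for the family $\mathcal F$ of subgroups intersecting $1\times G_0$ trivially, while restricting \cite[Proposition 1.1.26]{schwede:global} from $K\times G$ down to $K\times_{G(\mK)} G$ identifies the source as a universal space for the same family (since a graph subgroup of $K\times G$ lies in $K\times_{G(\mK)} G$ precisely when it belongs to $\mathcal F$). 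Consequently $\zeta^\flat(\Uc_K)$ is a $(K\times_{G(\mK)} G)$-equivariant weak equivalence between two models of the same universal space. Unpacking the $\Gamma(\beta)$-fixed points of the orbit spaces $(-\times Z)/G_0$, they decompose as a disjoint union indexed by $G_0$-conjugacy classes of continuous lifts $\alpha\colon K\to G$ of $\beta$, with $\alpha$-component the orbits by the free action of the centralizer $C_{G_0}(\alpha(K))$ on the product $Z^{\alpha(K)}\times E_\alpha$, where $E_\alpha$ denotes the Stiefel space of $\alpha$-equivariant isometric embeddings. Contractibility of $E_\alpha$ comes from \cite[Proposition 1.1.21]{schwede:global} on the source side and from Proposition \ref{prop:quaternion contractible} on the target side, and since $\zeta^\flat$ is $C_{G_0}(\alpha(K))$-equivariant and preserves each component, the induced map is a weak equivalence component-wise.

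For part (ii), I follow the mapping-cone argument from the proof of Theorem \ref{thm:rho2C-global}: the orthogonal $G(\mK)$-spectrum $\Sigma^\infty \bL(uW,-)_+\sm_{G_0} Z^\diamond$ is isomorphic to the mapping cone of the collapse morphism $\Sigma^\infty_+(\bL(uW,-)\times_{G_0}Z)\to \Sigma^\infty_+ \bL(uW,-)/G_0$ induced by $Z\to\ast$, and likewise on the target. Part (i), applied both as stated and in the special case $Z=\ast$, together with Proposition \ref{prop:suspension is homotopical}, shows that the source and target of each collapse morphism are related by $G(\mK)$-global equivalences. Applying the five-lemma to the long exact sequences of equivariant homotopy groups for every continuous homomorphism from a compact Lie group to $G(\mK)$ then yields that the induced map of mapping cones is a $G(\mK)$-global equivalence. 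The main obstacle lies in the fixed-point analysis in (i), specifically in translating between the $(K\times_{G(\mK)} G)$-action on the Stiefel manifolds and the residual $(K\times G(\mK))$-action on their $G_0$-orbit spaces in order to match the $\Gamma(\beta)$-fixed-point condition with the universal-space property.
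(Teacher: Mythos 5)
Your argument for part~(ii) matches the paper's (compare long exact sequences of mapping cones via the five lemma), and the opening of your part~(i) -- identifying both sides as universal $(K\times_{G(\mK)}G)$-spaces for the same family and citing Propositions \ref{prop:universal space} and \cite[Proposition 1.1.26]{schwede:global} -- is exactly the paper's strategy.

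Where you diverge is in what to do with that identification. You upgrade $\zeta^\flat(\Uc_K)$ only to an equivariant \emph{weak} equivalence and then resort to an explicit tom~Dieck-style decomposition of the $\Gamma(\beta)$-fixed points of the $G_0$-orbit space into components indexed by $G_0$-conjugacy classes of lifts $\alpha\colon K\to G$ of $\beta$, each a free $C_{G_0}(\alpha(K))$-quotient of $Z^{\alpha(K)}\times E_\alpha$. The paper instead notes that an equivariant map between two universal spaces for the same family is automatically an equivariant \emph{homotopy} equivalence (the cofibrancy needed for this is provided by Illman's theorem inside Proposition \ref{prop:universal space}); such a homotopy equivalence descends directly, after multiplying by $Z$ and passing to $G_0$-orbits, to a $(K\times_{G(\mK)}G)/G_0\cong K$-equivariant homotopy equivalence, and the $K$-fixed points are then observed to be exactly the $\Gamma(\beta)$-fixed points, finishing the proof without any decomposition. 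Your route can be pushed through, but it buys nothing and carries extra burden you do not address: the ``disjoint union'' over lifts of $\beta$ is only a genuine topological decomposition because $G_0$-conjugacy classes of continuous homomorphisms $K\to G$ form a discrete set (a rigidity fact for compact Lie groups that needs to be invoked), and concluding that the map of $C_{G_0}(\alpha(K))$-orbits is a weak equivalence requires knowing that each $E_\alpha$ is not merely contractible with free action but also a cofibrant free $C_{G_0}(\alpha(K))$-space. Using the stronger ``homotopy equivalence'' conclusion, as the paper does, sidesteps all of this.
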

\begin{proof}
  (i) We consider a continuous homomorphism $\beta:K\to G(\mK)$ from another compact Lie group.
  Then  $\bL^\mK(W,\Uc_K^\mK)$ is a universal $(K\times_{G(\mK)} G)$-space
  for the family of those  closed subgroups that intersect $1\times G_0$ trivially,
  by Proposition \ref{prop:universal space}.
  Also, the $(K\times G)$-space $\bL(u W, \Uc_K)$
  is a universal space for the family of graph subgroups,
  i.e., those closed subgroups $\Delta$ of $K\times G$
  such that $\Delta\cap(1\times G)=e$, compare \cite[Proposition 1.1.26]{schwede:global}.
  For a subgroup of $K\times_{G(\mK)} G$,
  the intersections with $1\times G$ and $1\times G_0$
  coincide; so the underlying $(K\times_{G(\mK)} G)$-space of
  $\bL(u W, \Uc_K)$ is a universal space for
  the family of those closed  subgroups $\Delta$ such that $\Delta\cap(1\times G_0)=e$.
  The continuous map
  \[ \zeta^\flat(\Uc_K)\ : \ \bL(u W,\Uc_K)\ \to \ \bL^\mK(W,\Uc_K^\mK)\ , \quad
    \varphi \ \longmapsto \ (\varphi\tensor\mK)\circ \zeta\]
  is $(K\times_{G(\mK)} G)$-equivariant;
  since source and target are universal spaces for the same family of subgroups,
  the map is a $(K\times_{G(\mK)} G)$-equivariant homotopy equivalence.
  The induced map on $G_0$-orbit spaces
  \[ \zeta^\flat(\Uc_K)\times_{G_0}Z\ : \  \bL(u W,\Uc_K)\times_{G_0}Z\ \to \
    \bL^\mK(W, \Uc_K^\mK)\times_{G_0} Z \]
  is thus an equivariant homotopy equivalence for the action
  of the group $(K\times_{G(\mK)} G)/G_0$.
  The projection to the first factor identifies the group
  $(K\times_{G(\mK)} G)/G_0$ with $K$;
  and the  $(K\times_{G(\mK)} G)/G_0$-fixed points coincide with the fixed points
  of the graph of $\beta$ on $(\zeta^\flat\times_{G_0}Z)(\Uc_K)$.
  So we have verified that the map $((\zeta^\flat\times_{G_0}Z)(\Uc_K))^{\Gamma(\beta)}$
  is a weak equivalence for every continuous homomorphism $\beta:K\to G(\mK)$.
  Since the orthogonal spaces underlying source and target of
  $\zeta^\flat\times_{G_0}Z$ are closed,
  the morphism $\zeta^\flat\times_{G_0}Z$ is a $G(\mK)$-global equivalence
  by Proposition \ref{prop:global eq for closed}.

  (ii) The same argument to compare the long exact homotopy group sequences of
  mapping cones as in Theorem \ref{thm:rho2C-global} applies here, but now the role of
  Proposition \ref{prop:Bgl independence} is played by the first part of this theorem.  
\end{proof}

We record a special case of Theorem \ref{thm:zeta_is_A-global} that is
particularly relevant for the application to global stable splittings of Stiefel manifolds.
We write
\[ \epsilon_k\ :\ \tilde I(k) =  I(k)\rtimes G(\mK) = \bL^\mK(\mK^k,\mK^k)\rtimes G(\mK)\ \to\  G(\mK) \]
for the augmentation of the extended isometry group of $\mK^k$,
i.e., the projection to the second factor.
The tautological action makes $\mK^k$  a faithful $(\tilde I(k),\epsilon_k)$-representation
in the sense of Definition \ref{def:G,epsilon-rep}.
So for every orthogonal $\tilde I(k)$-representation $U$,
Theorem \ref{thm:zeta_is_A-global} (ii) provides a $G(\mK)$-global equivalence
of orthogonal $G(\mK)$-spectra
\[  \Sigma^\infty \zeta^\flat_+\sm_{I(k)}S^U \ : \    \Sigma^\infty (B_{\gl}\epsilon_k)^U\
  \xra{\ \simeq\ } \   \Sigma^\infty (\bGr_k^\mK)^U \ . \]
We write 
\begin{equation}\label{eq:define_e_k,U}
  e_{k,U}\ = \ (\Sigma^\infty \zeta^\flat_+\sm_{I(k)}S^U)_*(e_{\epsilon_k,U,u\mK^k})
  \ \in \ \pi_U^{\tilde I(k)}(\epsilon_k^*(\Sigma^\infty (\bGr_k^\mK)^U))  
\end{equation}
for the image under this $G(\mK)$-global equivalence
of the tautological class defined in \eqref{eq:tautological_class}.
Because $\Sigma^\infty \zeta^\flat_+\sm_{I(k)}S^U$ is a $G(\mK)$-global equivalence,
Theorem \ref{thm:Bgl alpha represents} (i) implies the following result.

\begin{cor}\label{cor:J_is_A-global}
  Let $\mK$ be $\mC$ or $\mH$.
  Let $U$ be an orthogonal representation of the extended isometry group $\tilde I(k)$.
  Then for every orthogonal $G(\mK)$-spectrum $Y$, the evaluation map
  \[  \GH_{G(\mK)}(\Sigma^\infty (\bGr_k^\mK)^U ,Y) \   \to  \
    \pi_U^{\tilde I(k)}(\epsilon_k^*(Y)) \ , \quad
      f \ \longmapsto \ f_*(e_{k,U})\]
    is an isomorphism.
\end{cor}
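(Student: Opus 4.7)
The plan is to deduce the corollary purely formally from Theorem \ref{thm:Bgl alpha represents}(i), using the $G(\mK)$-global equivalence provided by Theorem \ref{thm:zeta_is_A-global}(ii) to transport the representability property from $\Sigma^\infty(B_{\gl}\epsilon_k)^U$ to $\Sigma^\infty(\bGr_k^\mK)^U$.

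First I would apply Theorem \ref{thm:Bgl alpha represents}(i) to the continuous homomorphism $\epsilon_k\colon\tilde I(k)\to G(\mK)$ (with $G=\tilde I(k)$ and $C=G(\mK)$), using the faithful orthogonal $\tilde I(k)$-representation $u\mK^k$ to form $B_{\gl}\epsilon_k=G(\mK)\times_{\epsilon_k}\bL(u\mK^k,-)$. Faithfulness is easy to check: the normal subgroup $I(k)$ already acts faithfully on $\mK^k$, and every nontrivial Galois automorphism of $\mK$ acts nontrivially on the underlying real vector space $u\mK$. Theorem \ref{thm:Bgl alpha represents}(i) then yields the representability bijection
\[ \Phi\ :\ \GH_{G(\mK)}\bigl(\Sigma^\infty(B_{\gl}\epsilon_k)^U,\,Y\bigr)\ \xra{\ \iso\ }\ \pi_U^{\tilde I(k)}\bigl(\epsilon_k^*Y\bigr),\qquad g\ \longmapsto\ g_*(e_{\epsilon_k,U,u\mK^k}). \]

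Second I would invoke Theorem \ref{thm:zeta_is_A-global}(ii), which (in the Thom spectrum form used on the paragraph right after that theorem, with $W=\mK^k$) supplies the $G(\mK)$-global equivalence
\[ \sigma\ :=\ \Sigma^\infty\zeta^\flat_+\sm_{I(k)}S^U\ :\ \Sigma^\infty(B_{\gl}\epsilon_k)^U\ \xra{\ \simeq\ }\ \Sigma^\infty(\bGr_k^\mK)^U. \]
Since $G(\mK)$-global equivalences are by construction invertible in $\GH_{G(\mK)}$, precomposition with $\sigma$ is a bijection
\[ \sigma^*\ :\ \GH_{G(\mK)}\bigl(\Sigma^\infty(\bGr_k^\mK)^U,\,Y\bigr)\ \xra{\ \iso\ }\ \GH_{G(\mK)}\bigl(\Sigma^\infty(B_{\gl}\epsilon_k)^U,\,Y\bigr). \]

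Finally, comparing the two isomorphisms with the evaluation map in the corollary is immediate from the definition of $e_{k,U}$ in \eqref{eq:define_e_k,U}: for any $f\colon\Sigma^\infty(\bGr_k^\mK)^U\to Y$ in $\GH_{G(\mK)}$, functoriality of the push-forward gives
\[ \Phi(\sigma^*f)\ =\ (f\circ\sigma)_*(e_{\epsilon_k,U,u\mK^k})\ =\ f_*\bigl(\sigma_*(e_{\epsilon_k,U,u\mK^k})\bigr)\ =\ f_*(e_{k,U}). \]
Hence the evaluation map of the corollary factors as the composite $\Phi\circ\sigma^*$ of two bijections and is itself a bijection. There is no real obstacle: once the two cited theorems are available, the argument is entirely formal, and the only thing to verify by hand is the faithfulness of $u\mK^k$ as a $\tilde I(k)$-representation.
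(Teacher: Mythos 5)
Your proof is correct and is essentially the same argument the paper gives: the corollary is read off from Theorem \ref{thm:Bgl alpha represents}(i) by transporting along the $G(\mK)$-global equivalence $\Sigma^\infty\zeta^\flat_+\sm_{I(k)}S^U$ from Theorem \ref{thm:zeta_is_A-global}(ii), using the definition \eqref{eq:define_e_k,U} of $e_{k,U}$ to identify the two evaluation maps. One small remark on the faithfulness check: the fact that $I(k)$ acts faithfully and the Galois group acts nontrivially on $u\mK$ does not by itself rule out a nontrivial kernel sitting diagonally in the semidirect product; the cleaner observation is that $(A,\tau)$ acts $\tau$-semilinearly on $\mK^k$, so if it acts as the ($\mK$-linear) identity then $\tau=1$ and hence $A=\Id$.
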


\section{Some linear algebra}
\label{sec:linear algebra}

The purpose of this appendix is to provide detailed proofs of the linear algebra facts used in
the main part of this paper.
We set things up so that everything works simultaneously over the fields $\mR$ and $\mC$ and over
the skew-field $\mH$ of quaternions.
Linear algebra over the quaternions comes with some additional caveats,
many due to the non-commutativity of the multiplication.
So I felt the need to justify that the relevant arguments can indeed be adapted
to the quaternion situation.
I make absolutely no claim to originality for anything in this appendix.

\medskip

Throughout this appendix, we will let $\mK$ denote one of the three skew-fields $\mR$, $\mC$ or $\mH$.
A $\mK$-vector space is a right $\mK$-module.
Given two  $\mK$-vector spaces $V$ and $W$, we write
$\Hom_\mK(V,W)$ for the $\mR$-vector space of right $\mK$-linear maps.
For $\mK=\mC$ one can make $\Hom_\mC(V,W)$ a $\mC$-vector space
by pointwise scalar multiplication, but we will not use this structure.
Due to the non-commutativity of the quaternions, for $\mK=\mH$
there is no natural way to endow $\Hom_\mH(V,W)$ with an $\mH$-action.

A {\em $\mK$-inner product space} is a finite-dimensional $\mK$-vector space
equipped with a sesquilinear, hermitian and positive-definite $\mK$-valued inner product $[-,-]$,
see Definition \ref {def:mK-inner}.
An example of an inner product space is $\mK^k$ with the standard inner product 
\[ [x,y]\ = \  \bar x_1\cdot y_1 + \dots + \bar x_k\cdot y_k \ . \]
Every $\mK$-inner product space $W$ admits
an orthonormal basis $(w_1,\dots,w_k)$, i.e., such that
\[ [w_i,w_j]\ = \
  \begin{cases}
    1 & \text{ for $i=j$, and}\\
    0 & \text{ for $i\ne j$.}
  \end{cases}
\]
The familiar argument from real and complex linear algebra also works over the
quaternions: we choose a non-zero vector $w\in W$
and normalize it to $w_1= w/|w|$; the orthogonal complement
$W^\perp=\{v\in W \ : \ [v,w]=0\}$ is then a $\mK$-subspace that has an orthonormal basis by induction
over the dimension.
A choice of orthonormal basis of $W$ provides a $\mK$-linear isometry $\mK^k\iso W$,
where $k=\dim_\mK(W)$.
So up to $\mK$-linear isometry, the standard examples $\mK^k$ are the only examples
of $\mK$-inner product spaces.

\begin{rk}[Adjoints]\label{rk:adjoint}
  Because inner products are positive-definite, they are in particular
  non-degenerate, i.e., they provide an identification with the dual vector space.
  Given a $\mK$-vector space, we write
  \[ W^\vee\ = \ \Hom_{\mK}(W,\mK)\]
  for the $\mR$-vector space of right $\mK$-linear maps.
  We give $W^\vee$ a right $\mK$-action by
  \[ (f\cdot \lambda)(w)\ = \ \bar\lambda \cdot f(w) \]
  for $f\in W^\vee$, $\lambda\in\mK$ and $w\in W$. If $W$ is finite-dimensional and
  $[-,-]$ is a $\mK$-inner product on $W$, then the map
  \[ W \ \to \ W^\vee \ , \quad w \ \longmapsto \ [w,-] \]
  is a $\mK$-linear isomorphism.

  Now we let $X:W\to V$ be a $\mK$-linear map between $\mK$-inner product spaces.
  The {\em adjoint} of $X$ is the $\mK$-linear map $X^*:V\to W$ that makes the following square commute:
  \[ \xymatrix{
      V \ar[r]^-{X^*}\ar[d]_{v\mapsto [v,-]}^\iso &
      W\ar[d]^{w\mapsto [w,-]}_\iso \\
      V^\vee \ar[r]_-{X^\vee} & W^\vee
    } \]
  So the adjoint is characterized by the relation
  \[ [X^*v,w]\ = \ [v, X w] \]
  for all $v\in V$ and $w\in W$. Passage to the adjoint is $\mR$-linear, contravariantly functorial,
  and involutive.
\end{rk}

Given two $\mK$-inner product spaces $V$ and $W$, we write $\bL^\mK(V,W)$
for the Stiefel manifold of $\mK$-linear isometric embeddings,
i.e., right $\mK$-linear maps $A:V\to W$ that satisfy
$[A v,A v'] = [v,v']$
for all $v,v'\in V$. An equivalent condition is to demand that $A^*\cdot A=\Id_V$.
In the special case $V=W$, we also write
\[ I(W)\ = \ \bL^\mK(W,W) \]
and refer to this as the {\em isometry group} of $W$.
The traditional names for the isometry group are of course
the orthogonal group $O(W)$ in the case $\mK=\mR$,
the unitary group $U(W)$ in the case $\mK=\mC$,
and the symplectic group $Sp(W)$ in the case $\mK=\mH$.

\begin{rk}[Adjoint representation]
  We let $W$ be a finite-dimensional $\mK$-vector space. The {\em exponential map} 
  \begin{equation}\label{eq:exponential}
 \exp\ : \ \End_\mK(W) \ \to \ G L_\mK(W)
    \text{\qquad is given by\qquad}
    \exp(X)\ = \ \sum_{k\geq 0} X^k/k!\ .    
  \end{equation}
  If the endomorphisms $X$ and $Y$ commute, then $\exp(X+Y)= \exp(X)\cdot \exp(Y)$.
  For a $\mK$-inner product space $W$, we write 
  \[  \mathfrak{ad}(W)\ = \ \{ X\in \End_\mK(W)\ : \ X^*=-X\} \]
  for the $\mR$-vector space of its skew-adjoint endomorphisms.
  Since $(X^*)^k=(X^k)^*$ for all $k\geq 0$, also $\exp(X^*)=\exp(X)^*$.
  So if $X\in \mathfrak{ad}(W)$ is skew-adjoint, then
  \[\exp(X)^*\cdot \exp(X)\ = \ \exp(X^*)\cdot \exp(X)\ = \
    \exp(-X)\cdot \exp(X)\ = \ \exp(0)\ = \ \Id_W\ .  \]
  In other words, the exponential map restricts to a smooth map
  \begin{equation}\label{eq:exp_on_ad}
    \exp\ : \ \mathfrak{ad}(W) \ \to \ I(W)     
  \end{equation}
  from the skew-adjoint endomorphisms to the isometry group.
  This map is in fact a local diffeomorphism around the origin,
  and it exhibits $\mathfrak{ad}(W)$
  as the adjoint representation of the compact Lie group $I(W)$,
  whence the notation.
\end{rk}

\Danger  Due to the non-commutativity of the quaternions, the notions of `eigenvalues' and `eigenvectors'
  of $\mH$-linear endomorphisms are somewhat problematic.
  Indeed, if $X:W\to W$ is an $\mH$-linear endomorphism
  and $\lambda\in\mH$ a scalar, then the `eigenspace'
  \[ \{ w\in W \ : \ X w = w \lambda \} \]
  is an $\mR$-subspace of $W$, but typically {\em not} closed under multiplication by scalars from $\mH$.
  If the scalar $\lambda$ is real, and hence central in $\mH$, then the issue disappears and
  the above is an $\mH$-subspace of $W$.
  So in order to deal with eigenspaces in a uniform way for $\mR$, $\mC$ and $\mH$,
  we should -- and will -- restrict to {\em real} eigenvalues.

\begin{prop}\label{prop:diagonalizable}
  Let $X$ be a self-adjoint endomorphism of a $\mK$-inner product space $W$,
  i.e., $X^*=X$.  Then $X$ is diagonalizable with real eigenvalues and pairwise orthogonal eigenspaces.
\end{prop}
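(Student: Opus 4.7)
My plan is to reduce the statement to the real spectral theorem applied to the underlying euclidean vector space $uW$. The crucial observation is that the real part $\langle x,y\rangle = \Re[x,y]$ of the $\mK$-inner product makes $uW$ into a finite-dimensional real inner product space, that $X$ is $\mR$-linear (since it is $\mK$-linear), and that $X$ is self-adjoint for $\langle -,-\rangle$ because
\[ \langle X x, y\rangle \ = \ \Re[Xx,y]\ =\ \Re[x, X^*y]\ =\ \Re[x,Xy]\ =\ \langle x, Xy\rangle \ . \]
Hence the classical real spectral theorem applies to $X$ regarded as a self-adjoint $\mR$-linear endomorphism of $uW$, and yields a direct sum decomposition $uW=\bigoplus_{\lambda\in\mR} E_\lambda$ into pairwise $\langle -,-\rangle$-orthogonal real eigenspaces with distinct real eigenvalues.

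It then remains to promote this real decomposition to a $\mK$-linear one. First I would check that each real eigenspace $E_\lambda$ is in fact a $\mK$-subspace of $W$: for $w\in E_\lambda$ and $\mu\in\mK$, we have $X(w\mu)=(Xw)\mu=(w\lambda)\mu=w(\lambda\mu)=w(\mu\lambda)=(w\mu)\lambda$, where the commutation of the scalars uses crucially that $\lambda\in\mR$ is central in $\mK$. Then I would verify that for $\mK$-subspaces $V,V'\subseteq W$, the conditions of being $\langle-,-\rangle$-orthogonal and $[-,-]$-orthogonal coincide: indeed, $[v,v']=0$ implies $\langle v,v'\rangle=0$ trivially, and conversely if $\Re[v\mu,v']=\Re(\bar\mu\cdot[v,v'])=0$ for all $\mu\in\mK$, then $[v,v']$ must vanish (test against $\mu$ running through an $\mR$-basis of $\mK$ dual to the real part pairing). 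Combining these observations, the decomposition $W=\bigoplus_\lambda E_\lambda$ is a direct sum of pairwise $[-,-]$-orthogonal $\mK$-subspaces on which $X$ acts by the real scalar $\lambda$, which is exactly the claim.

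The only step requiring genuine care in the quaternionic setting is the promotion of $\mR$-eigenspaces to $\mK$-subspaces, and this is precisely where the self-adjointness hypothesis earns its keep: without it, eigenvalues need not be real, and for non-central $\lambda\in\mH$ the set $\{w\in W\colon Xw=w\lambda\}$ is merely an $\mR$-subspace. Since for self-adjoint $X$ the spectral theorem over $\mR$ supplies only real eigenvalues (which are central in $\mH$), the reduction goes through uniformly in $\mK\in\{\mR,\mC,\mH\}$ and the non-commutativity of the quaternions presents no obstruction.
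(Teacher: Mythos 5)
Your proof is correct and takes a genuinely different route from the paper. The paper proves the quaternionic case by induction on $\dim_\mK W$: it regards $X$ as a $\mC$-linear endomorphism, extracts a single complex eigenvector, shows self-adjointness forces the eigenvalue to be real, checks $X$-invariance of the orthogonal complement, and iterates, while citing the real and complex cases as known. You instead make a single appeal to the real spectral theorem on the underlying euclidean space $uW$ with inner product $\Re[-,-]$, then promote the real eigendecomposition to a $\mK$-linear one. The two approaches put the centrality of real scalars to work in different places: the paper uses it (implicitly) to carry the induction, while you use it to show each real eigenspace $E_\lambda$ is automatically a $\mK$-subspace, and then exploit the $\mK$-stability of $E_\lambda$ to upgrade $\Re[-,-]$-orthogonality to $[-,-]$-orthogonality via non-degeneracy of the pairing $(\mu,a)\mapsto\Re(\bar\mu a)$ on $\mK$. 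Your argument is arguably more uniform (it treats $\mR$, $\mC$, $\mH$ in one stroke rather than appealing to textbooks for two of the three cases) and it replaces the induction with a one-shot reduction; the cost is that it presupposes the real spectral theorem as a black box, whereas the paper's inductive argument is essentially self-contained modulo the existence of a complex eigenvalue. Both establish exactly what is claimed.
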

\begin{proof}
  The argument over the real and complex numbers can be found in many text books on linear algebra.
  So we restrict to the less common case $\mK=\mH$ of the quaternions.
  We argue over the dimension of $W$; the induction starts with $W=0$, where there is nothing to show.
  Now we suppose that $W\ne 0$.
  We treat $X$ as a $\mC$-linear endomorphism of the underlying $\mC$-vector space
  of $W$. There is then an eigenvector $w\in W\setminus\{0\}$ and a complex scalar $\lambda\in\mC$ such that
  $X w= w\lambda$.
  Because $X$ is self-adjoint we deduce that
  \[
    [w,w]\lambda \ = \
    [w, w\lambda]\ = \  [w, X w]\ = \ [X w, w]\ = \ [w \lambda, w]
    \ = \ \bar\lambda [w, w]\ .
  \]
  Since $w\ne 0$, the inner product $[w,w]$ is a non-zero real number,
  so we must have $\lambda=\bar\lambda$,  i.e., the complex scalar $\lambda$ is in fact real.
  Now we observe that the orthogonal complement $W^\perp$
  of $w$ is invariant under $X$. Indeed, for $v\in W^\perp$ we have
  \[ [X v,w] \ = \  [v, X w]\ = \ [v, w\lambda]\ = \ [v,w]\lambda \ = \ 0 \ .\]
  We can thus apply the inductive hypothesis to the restricted endomorphism  $X|_{W^\perp}:W^\perp\to W^\perp$.
  Since this restriction is diagonalizable 
  with real eigenvalues and pairwise orthogonal eigenspaces,
  the same is true for the original endomorphism $W$.
\end{proof}

We write
\[  \mathfrak{s a}(W)\ = \ \{ X\in \End_\mK(W)\ : \ X^*=X\} \]
for the $\mR$-vector space of self-adjoint endomorphisms of a $\mK$-inner product space $W$.
As mentioned above, the exponential map \eqref{eq:exponential}
commutes with the passage to adjoints;
so if $X$ is self-adjoint, then so is $\exp(X)$.
Hence for $X\in \mathfrak{s a}(X)$, both $X$ and $\exp(X)$ are
diagonalizable with real eigenvalues and pairwise orthogonal eigenspaces,
by Proposition \ref{prop:diagonalizable}.
Moreover, if $X w=w\lambda$, then
\[ \exp(X)w \ = \ \sum_{k\geq 0} X^k w\ = \ \sum_{k\geq 0} w\cdot \lambda^k \ = \ w\cdot \exp(\lambda)\ . \]
So $\exp(X)$ has the same eigenspaces as $X$, but the corresponding eigenvalues are
exponentiated. This process can be reversed as long as the eigenvalues are positive,
so the restricted exponential map
\begin{equation}\label{eq:exp_on_hW}
 \exp \  : \ \mathfrak{s a}(W) \ \xra{\ \iso \ } \ \mathfrak{s a}^+(W) \ = \
  \{ X\in \mathfrak{s a}(W)\ : \ \text{$X$ is positive-definite}\}  
\end{equation}
is a homeomorphism onto the subspace of positive-definite self-adjoint endomorphisms,
i.e, the ones with positive real eigenvalues.

\begin{prop}\label{prop: inverse emb mod m over H}
  Let $V$ and $W$ be $\mK$-inner product spaces. The smooth map
  \[ \bL^\mK(W,V)\times \mathfrak{s a}(W) \ \to \ \Hom_\mK(W,V)\ , \quad
    (A,Z)\ \longmapsto \ A\cdot\exp(-Z) \]
  is an open embedding with image the subspace of $\mK$-linear monomorphisms.
\end{prop}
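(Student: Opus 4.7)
The plan is to establish this as a polar-decomposition-type result that works uniformly over $\mR$, $\mC$ and $\mH$, leveraging the spectral theorem (Proposition \ref{prop:diagonalizable}) and the homeomorphism \eqref{eq:exp_on_hW} between $\mathfrak{s a}(W)$ and the positive-definite self-adjoint endomorphisms $\mathfrak{s a}^+(W)$.

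First I would pin down the image. Any map of the form $F = A\cdot\exp(-Z)$ is $\mK$-linear and injective, because $A$ is an isometric embedding and $\exp(-Z)$ is invertible with inverse $\exp(Z)$. Conversely, given a $\mK$-linear monomorphism $F\colon W\to V$, the composite $F^*F\colon W\to W$ lies in $\mathfrak{s a}^+(W)$: it is self-adjoint because $(F^*F)^*=F^*F$, and positive-definite because $[w,F^*Fw]=[Fw,Fw]>0$ for every $w\neq 0$. By \eqref{eq:exp_on_hW}, there is then a unique $Z\in\mathfrak{s a}(W)$ with $\exp(-2Z)=F^*F$. Setting $A:=F\cdot\exp(Z)$ and using that $\exp(Z)$ is self-adjoint (since $\exp(X^*)=\exp(X)^*$) together with the commutation of polynomials in $Z$, one computes
\[
A^*A \ = \ \exp(Z)\cdot F^*F\cdot \exp(Z) \ = \ \exp(Z)\cdot\exp(-2Z)\cdot\exp(Z) \ = \ \Id_W \ ,
\]
so $A$ is an isometric embedding and $F=A\cdot\exp(-Z)$ lies in the image.

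Uniqueness of this factorization is immediate from the same computation: any presentation $F=A\cdot\exp(-Z)$ forces $F^*F=\exp(-2Z)$, determining $Z$ through \eqref{eq:exp_on_hW}, and then $A=F\cdot\exp(Z)$ is forced as well. So the map is a bijection onto the monomorphisms, and this target is open in $\Hom_\mK(W,V)$: choosing an $\mR$-basis, it is cut out by the non-vanishing of some maximal minor of any real matrix representative, which is a Zariski-open condition.

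It remains to promote the set-theoretic bijection to a diffeomorphism onto an open subset. The forward map is smooth because $\exp$ on $\mathfrak{s a}(W)$ is smooth and composition of linear maps is polynomial. The inverse is $F\mapsto(F\cdot\exp(Z(F)),\, Z(F))$ where $Z(F)=-\tfrac{1}{2}\log(F^*F)$, and $\log\colon\mathfrak{s a}^+(W)\to\mathfrak{s a}(W)$ is smooth as the inverse of the real-analytic diffeomorphism \eqref{eq:exp_on_hW}; composing with the smooth maps $F\mapsto F^*F$ and $(F,Z)\mapsto F\cdot\exp(Z)$ gives smoothness of the inverse. The only delicacy over $\mH$ is verifying that self-adjointness, positive-definiteness, and the identity $\exp(X^*)=\exp(X)^*$ behave exactly as in the commutative cases, but these are precisely the facts already underpinning \eqref{eq:exp_on_hW}, so no new linear algebra is required.
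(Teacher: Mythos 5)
Your proof is correct and follows essentially the same polar-decomposition route as the paper's. The paper first uses the homeomorphism \eqref{eq:exp_on_hW} to reduce to showing that $(A,P)\mapsto A\cdot P$ is a homeomorphism $\bL^\mK(W,V)\times\mathfrak{s a}^+(W)\to\Hom^{\text{inj}}_\mK(W,V)$, with inverse $B\mapsto (B\sqrt{B^*B}^{-1},\sqrt{B^*B})$; you instead invert directly via $Z(F)=-\tfrac12\log(F^*F)$, which is the same thing since $\exp(Z(F))=\sqrt{F^*F}^{-1}$. The only cosmetic difference is that you spell out the openness of $\Hom^{\text{inj}}_\mK(W,V)$ and the smoothness of the inverse, which the paper treats as understood.
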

\begin{proof}
  Since the restricted exponential map \eqref{eq:exp_on_hW} is a homeomorphism, it suffices to show 
  that the composition map
  \[  \circ \ : \  \bL^\mK(W,V)\times \mathfrak{s a}^+(W)  \ \to \ \Hom^{\text{inj}}_\mK(W,V)  \]
  is a homeomorphism.
  If $B:W\to V$ is any $\mK$-linear map, then $B^*\cdot B$ is self-adjoint
  and positive semi-definite. If $B$ is a moreover injective, then 
  $B^*\cdot B$ is even positive-definite.  We write
  \[ \sqrt{\quad}\ : \ \mathfrak{s a}^+(W)\ \to \ \mathfrak{s a}^+(W)\]
  for the homeomorphism that sends a positive-definite self-adjoint endomorphism $X$
  to the unique  positive-definite self-adjoint endomorphism such that $\sqrt{X}\cdot\sqrt{X}=X$.
  Then the continuous map
  \[ \ \Hom_\mK^{\text{inj}}(W,V)\ \to\  \bL^{\mK}(W,V)\times\mathfrak{s a}^+(W)\ , \quad
    B \longmapsto \
    \left( B\cdot\sqrt{ B^*\cdot B}^{-1},\sqrt{B^*\cdot B}\right) \]
  is inverse to the composition map.
\end{proof}

The complex version of the next proposition is \cite[Lemma 1.13]{crabb:U(n) and Omega};
Crabb leaves the proof as an exercise in linear algebra, and we will do the exercise.
Crabb indicates how one can arrive at the formula for the open embedding $\mathfrak c$
in the paragraph after \cite[Lemma 1.15]{crabb:U(n) and Omega}.
Propositions \ref{prop:Cayley invariant} and \ref{prop:Stiefel relative homeo}
together recover Miller's homeomorphism \cite[Theorem A]{miller}
that exhibits the filtration stratum $\bF_k^\mK(W;m)\setminus \bF_{k-1}^\mK(W;m)$
as the total space of the vector bundle over $\bGr_k^\mK(W)$
associated to the $I(k)$-representation $\nu(k,m)\oplus\mathfrak{ad}(k)$.

\begin{prop}\label{prop:Cayley invariant}
  Let $V$ and $W$ be $\mK$-inner product spaces. Then the map
  \[  \mathfrak c\ : \ \Hom_{\mK}(W,V)\oplus \mathfrak{ad}(W) \ \to \ \bL^\mK(W,W\oplus V)  \ ,\quad
    \mathfrak c(Y,X)\  = \ (g,h)      \]
  with
  \begin{alignat*}{3}
    g \ &= \ (X/2 + Y^*Y/4 -1)(X/2 + Y^*Y/4 +1)^{-1} &&: \ W \ \to \ W\\
    h \ &= \ Y (1-g)/2 &&: \ W\ \to \ V
  \end{alignat*}
  is an open embedding onto the subspace of those $f\in\bL^\mK(W,W\oplus V)$
  such that $\ker(f-i_1)=0$.
\end{prop}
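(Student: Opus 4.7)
The plan is to prove the proposition by constructing an explicit smooth inverse to $\mathfrak c$ defined on the subspace $\Uc \subseteq \bL^\mK(W, W\oplus V)$ of pairs $f = (g,h)$ with $\ker(f - i_1) = 0$, and verifying that $\Uc$ is open. Setting $A := X/2 + Y^*Y/4$ to simplify notation, the formulas for $\mathfrak c$ read $g = (A-1)(A+1)^{-1}$ and (after the brief calculation $1-g = 2(A+1)^{-1}$) also $h = Y(A+1)^{-1}$. Well-definedness rests on $A+1$ being invertible for every $(Y,X)$: since $X$ is skew-adjoint we have $\Re[w,Xw] = 0$, and the positivity estimate $\Re[w, (A+1)w] = |Yw|^2/4 + |w|^2 \geq |w|^2$ forces $A+1$ to be injective, hence invertible on the finite-dimensional space $W$. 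The isometry identity $g^*g + h^*h = \Id_W$ then reduces, after clearing the factors $(A+1)^{-1}$ and $((A+1)^*)^{-1}$, to the algebraic identity $(A-1)^*(A-1) + Y^*Y = (A+1)^*(A+1)$, which in turn follows from $A + A^* = Y^*Y/2$ (immediate from $A = X/2 + Y^*Y/4$, $X^* = -X$, and $(Y^*Y)^* = Y^*Y$).

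The image of $\mathfrak c$ lies in $\Uc$ since $g - \Id = -2(A+1)^{-1}$ is invertible, so $\ker(g - \Id) = 0$. To characterize $\Uc$ usefully I would observe that for any isometric embedding $f = (g,h) \in \bL^\mK(W, W \oplus V)$, the relation $g^*g + h^*h = \Id$ forces $\ker(g - \Id) \subseteq \ker h$ (if $gw = w$ then $|hw|^2 = |w|^2 - |gw|^2 = 0$), so the condition $\ker(f - i_1) = \ker(g - \Id) \cap \ker h = 0$ is equivalent to $\Id-g$ being invertible. Given such a pair $(g,h)\in\Uc$, the candidate inverse is then $A := (\Id-g)^{-1}(\Id+g)$, $Y := 2h(\Id-g)^{-1}$, $X := 2A - Y^*Y/2$; the round-trip identities $(A-1)(A+1)^{-1} = g$ and $Y(A+1)^{-1} = h$ are verified routinely, using that $g$ commutes with polynomials in $g$.

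The main obstacle I anticipate is verifying that the $X$ produced by the inverse construction is actually skew-adjoint; equivalently, that $A + A^* = Y^*Y/2$ holds for this constructed $A$ and $Y$. I would establish this by computing $(\Id-g^*)(A+A^*)(\Id-g)$ directly: after commuting $\Id - g$ past $\Id + g$ (they are polynomials in $g$), the expression collapses to $(\Id-g^*)(\Id+g) + (\Id+g^*)(\Id-g) = 2(\Id - g^*g) = 2h^*h$, and conjugating by the invertible factors rearranges this into the desired identity. Once this is in hand, both $\mathfrak c$ and its inverse are manifestly smooth, and $\Uc$ is open because injectivity of the linear map $f - i_1$ is an open condition; hence $\mathfrak c$ is a smooth open embedding onto $\Uc$. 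The non-commutativity of $\mH$ forces careful bookkeeping of factor orderings throughout, but introduces no new conceptual difficulty.
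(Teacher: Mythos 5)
Your argument is correct and follows the same route as the paper's: set $A = X/2 + Y^*Y/4$, show $A+1$ invertible, verify $g^*g + h^*h = \Id$ via $A + A^* = Y^*Y/2$, and build the explicit smooth inverse on the locus where $\Id - g$ is invertible; your $X = 2A - Y^*Y/2$ is algebraically the same as the paper's manifestly skew-adjoint $2(\Id - g^*)^{-1}(g - g^*)(\Id - g)^{-1}$, so the skew-adjointness check you flag as the main obstacle is the same computation in a different guise. Two small points you make explicit that the paper passes over silently are welcome: the observation $\ker(g-\Id) \subseteq \ker h$, which is what actually identifies the condition $\ker(f-i_1)=0$ with invertibility of $\Id - g$, and the one-line positivity estimate $\Re[w,(A+1)w] \geq |w|^2$ in place of the paper's proof by contradiction.
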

\begin{proof}
  It will be convenient to abbreviate
  \[  C \ = \ X/2 + Y^* Y/4  \ ;\]
  we check that the endomorphism $C+1$ has a trivial kernel,
  and is thus invertible.
  Indeed, suppose that $C w=- w$  for some $w\in W$. Then
  \[  - X w/2 \ = \ - (C-Y^*Y/4) w \ = \ w + Y^* Y w/4 \ ,\]
  and hence
  \begin{align*}
    -[w,w]\ &= \ [w, C w] \ = \ [w, (X/2+ Y^* Y/4)w] \ = \ 1/2 [X^*w,w] + 1/4 [Y w, Y w]\\
            &= \  - 1/2[X w,w] + 1/4 [Y w, Y w]\\
    &= \  [w+Y^* Y w/4 ,w] + 1/4 [Y w, Y w]
              \ = \  [w,w] + 1/2 [Y w, Y w]          \geq  0 \ .
  \end{align*}
  So we must have $w=0$. Since $C+1$ is invertible, the definitions of
    \begin{equation}\label{eq:g_in_C}
    g \ = \ (C-1)(C+1)^{-1}
  \end{equation}
  and $h$ make sense.

  Now we show that $g^* g +h^* h=1$, so that $(g,h):W\to W\oplus V$ is indeed a linear isometric embedding.
  Because $X$ is skew-adjoint and $Y^*Y$ is self-adjoint, we have $C+C^*=Y^*Y/2$.
  Relation \eqref{eq:g_in_C} implies
  \begin{equation}\label{eq:1-g}
    1-g \ =\ 2(C+1)^{-1}\ ,      
  \end{equation}
  so
  \[  h^* h \ =  \ (1-g^*) Y^* Y (1-g)/4 \quad  _\eqref{eq:1-g} = \ 2 (C^*+1)^{-1} (C^*+C) (C+1)^{-1}\ . \]
  Hence
  \begin{align*}
    g^* g + h^* h  \  &= \ (C^*+1)^{-1} \left[(C^*-1)(C-1)+ 2 (C^* + C)\right](C+1)^{-1} \\
                     &= \ (C^*+1)^{-1} (C^*+1)(C+1)(C+1)^{-1} \ = \ 1\ .
  \end{align*}
  The morphism $1-g$ is invertible by \eqref{eq:1-g}, and thus has trivial kernel.
  So also the kernel of $(g,h)-i_1=(g-1,h):W\to W\oplus V$ is trivial.
  This concludes the verification that $\mathfrak c$ is a well-defined map with image in the subspace
  of those linear isometric embeddings $f:W\to W\oplus V$ such that $f-i_1$ has trivial kernel.

  Now we exhibit a continuous inverse to $\mathfrak c$. We define 
  \[ \lambda \ : \ \{ (g,h)\in \bL^\mK(W,W\oplus V)\ : \ \ker(1-g)=0\}\ \to \
    \Hom_\mK(W,V)\oplus\mathfrak{ad}(W)  \text{\quad by \quad} \lambda(g,h)\ = \ (Y,X)\ ,\]
  where
  \begin{align*}
    Y\ &= \ 2 h (1-g)^{-1} \\
    X\ &= \   2(1-g^*)^{-1}(g-g^*)(1-g)^{-1}\ .
  \end{align*}
We show that $\lambda$ is indeed inverse to $\mathfrak c$.
To verify the relation $\mathfrak c(\lambda(g,h))=(g,h)$
in the first component we set
\begin{align*}
    C \ &= \ X/2 + Y^* Y/4  \\
        &= \  (1-g^*)^{-1}(g-g^*+h^* h)(1-g)^{-1}\\
        &= \  (1-g^*)^{-1}\left( g-g^* + (1-g^* g) \right) (1-g)^{-1} \\
        &= \  (1-g^*)^{-1}( 1-g^*)(1 + g) (1-g)^{-1} \ = \ (1 + g) (1-g)^{-1} \ = \ 1 + 2 g (1-g)^{-1} \ .
\end{align*}
Then
\begin{align*}
  (C-1)\cdot (C+1)^{-1} \  
  &= \  2 g (1-g)^{-1}\cdot \left( 2 ( 1+g(1-g)^{-1})\right)^{-1}\\
  &= \   g (1-g)^{-1}\cdot \left(  1+g(1-g)^{-1}\right)^{-1}\
  = \   g (1-g)^{-1}\cdot (  1-g)\ = \  g \ .
\end{align*}
In the second component, the desired equality is simply
\[  Y (1-g)/2 \ = \   2 h (1-g)^{-1} (1-g) /2 \ = \ h \ .  \]
The final check is the relation $\lambda(\mathfrak c(Y,X))=(Y,X)$.
In the second component we use that
\begin{align*}
  1-g \ &= \ 1-(C-1)(C+1)^{-1}\ = \ 2 (C+1)^{-1}\\
  g-g^*  &= \ 2(C^*+1)^{-1} - 2 (C+1)^{-1}\ = \ 2 (C^*+1)^{-1}(C-C^*)(C+1)^{-1} \ ,
\end{align*}
and hence
\begin{align*}
       2(1-g^*)^{-1}(g-g^*)(1-g)^{-1}\
  &= \ C-C^*\  \ = \ 1/2(X-X^*) \ =  \ X \ .
\end{align*}
In the first component, the desired equality is simply
\[  2 h(1-g)^{-1} \ = \   2  \left( Y (1-g) /2\right)  (1-g)^{-1} \ = \ Y \ . \qedhere \]
\end{proof}

We let $W$ be a $\mK$-inner product space. As in the body of this paper,
we denote the $k$-th term of the eigenspace filtration of the Stiefel manifold $\bL^\mK(W,W\oplus\mK^m)$
by 
\[ \bF_k(W;m)\ = \ \{f\in \ \bL^\mK(W,W\oplus\mK^m)\ : \ \dim_\mK(\ker(f-i_1)^\perp)\leq k\} \ ,\]
where $i_1:W\to W\oplus\mK^m$ is the embedding of the first summand.
We write $I(k)=\bL^\mK(\mK^k,\mK^k)$ for the isometry group of $\mK^k$.
Conjugation ${^\psi(-)}$ by a linear isometric embedding $\psi$ was defined in \eqref{eq:conjugate by varphi}.

\begin{prop}\label{prop:Stiefel relative homeo}
  For every $\mK$-inner product space $W$ and all $k,m\geq 0$, the map
  \[  \bL^\mK(\mK^k,W)\times_{I(k)} \bL^\mK(\mK^k,\mK^{k+m})  \ \to \ \bF_k(W;m) \ ,\quad
    [\psi,f]\ \longmapsto \ {^\psi f} \]
  is a relative homeomorphism from the pair
  \[ \left(
      \bL^\mK(\mK^k,W)\times_{I(k)} \bL^\mK(\mK^k,\mK^{k+m}),\
      \bL^\mK(\mK^k,W)\times_{I(k)}  \bF_{k-1}(\mK^k;m)  \right)\]
  to the pair $(\bF_k(W;m),\bF_{k-1}(W;m))$.
\end{prop}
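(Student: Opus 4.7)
The plan is to verify directly that $\Phi \colon [\psi, f] \mapsto {}^\psi f$ is continuous, carries $\bL^\mK(\mK^k,W)\times_{I(k)} \bF_{k-1}(\mK^k;m)$ into $\bF_{k-1}(W;m)$, and restricts to a homeomorphism on the complements. Continuity on the balanced product is immediate from the explicit formula defining the conjugation action. For the eigenspace bookkeeping, write $f = (f_1, f_2)$ and decompose any $w \in W$ as $w = \psi(v) + w'$ with $w' \in W - \psi(\mK^k)$. A direct computation gives
\[
({}^\psi f - i_1)(w) \;=\; (\psi(f_1(v) - v),\, f_2(v)),
\]
which vanishes precisely when $v \in \ker(f - i_1)$. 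Hence $\ker({}^\psi f - i_1)^\perp = \psi(\ker(f-i_1)^\perp)$, whose $\mK$-dimension equals that of $\ker(f-i_1)^\perp$. So $\Phi$ lands in $\bF_k(W;m)$, sends the source boundary into $\bF_{k-1}(W;m)$, and maps the top stratum to the top stratum.

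To construct an inverse on the top stratum, fix $F \in \bF_k(W;m)\setminus \bF_{k-1}(W;m)$ and set $E_F := \ker(F - i_1)^\perp$, of $\mK$-dimension exactly $k$. Writing $F = (F_1, F_2)$ with $F_1 \colon W \to W$ and $F_2 \colon W \to \mK^m$, the isometry relation combined with $F|_{\ker(F-i_1)} = i_1|_{\ker(F-i_1)}$ forces $F_1(E_F) \subseteq E_F$: for $u \in \ker(F - i_1)$ and $v \in E_F$, expanding $[F u, F v] = [u, v] = 0$ yields $[u, F_1(v)] = 0$, so $F_1(v) \perp \ker(F - i_1)$. For any linear isometric embedding $\psi \colon \mK^k \to W$ with image $E_F$, the map
\[
f \;:=\; (\psi^{-1} F_1 \psi,\, F_2 \psi) \;\colon\; \mK^k \to \mK^{k+m}
\]
is then a well-defined linear isometric embedding; $\Phi([\psi,f]) = F$ is direct from the formula in the previous paragraph, and that same formula shows $\ker(f - i_1) = 0$. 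Distinct choices of $\psi$ differ by the free right $I(k)$-action, so the class $[\psi, f]$ is independent of choice.

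Continuity of this inverse uses that the Stiefel bundle $\bL^\mK(\mK^k, W) \to \bGr_k^\mK(W)$, $\psi \mapsto \psi(\mK^k)$, is a locally trivial principal $I(k)$-bundle. Over any open neighborhood $U$ in $\bGr_k^\mK(W)$ on which a continuous section $s \colon U \to \bL^\mK(\mK^k, W)$ exists, the inverse of $\Phi$ is represented by
\[
F \;\longmapsto\; [s(E_F),\, (s(E_F)^{-1} F_1 s(E_F),\, F_2 s(E_F))].
\]
This is continuous because the map $F \mapsto E_F$ is continuous on the top stratum — $E_F$ is the image of the adjoint $(F - i_1)^*$, whose rank is constantly $k$ there — and because $s$ and operator composition are continuous. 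Patching these local inverses yields a global continuous inverse, so $\Phi$ restricts to a homeomorphism on the complements, which is precisely the definition of a relative homeomorphism. The main obstacle is the $F_1$-invariance of $E_F$ in the inverse construction; once that is secured, the rest is formal manipulation combined with a standard principal bundle argument.
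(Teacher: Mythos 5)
Your proof is correct and recovers the proposition, but the route differs from the paper's at the one place where the paper makes a simplification. The paper opens with the observation that all the spaces involved are compact (Hausdorff), so it suffices to exhibit a set-theoretic inverse on the complements: the induced map on quotients is then a continuous bijection of compact Hausdorff spaces, hence automatically a homeomorphism. It then constructs the inverse exactly as you do — choose an isometry $\psi\colon\mK^k\to\ker(g-i_1)^\perp$, restrict $g$ and transport it back along $\psi$ — and stops there. You instead carry out the full continuity argument for the inverse, using local sections of the principal $I(k)$-bundle $\bL^\mK(\mK^k,W)\to\bGr_k^\mK(W)$ together with the continuity of $F\mapsto E_F=\operatorname{im}(F-i_1)^*$ on the open set of constant rank. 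This costs more work, but it buys two things: it would go through verbatim for noncompact Stiefel manifolds (e.g., the colimit case $\bF_k(W;\infty)$, which the paper reaches by other means), and it isolates and actually proves the $F_1$-invariance of $E_F$ — a fact the paper uses implicitly when it asserts that $g$ restricts to an embedding $\bar g\colon\ker(g-i_1)^\perp\to\ker(g-i_1)^\perp\oplus\mK^m$ without comment. So the two arguments are logically compatible; yours is longer but fills in a point the paper leaves to the reader and avoids the compactness shortcut.
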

\begin{proof}
  Since all spaces involved are compact, we only need to show that
  the map in question restricts to a homeomorphism from
\[  \bL^\mK(\mK^k,W)\times_{I(k)} ( \bL^\mK(\mK^k,\mK^{k+m}) \setminus \bF_{k-1}(\mK^k;m)) \]
  to the complement of $\bF_{k-1}(W;m)$ in $\bF_k(W;m)$.
  We consider a linear isometric embedding $g:W\to W\oplus \mK^m$
  in $\bF_k(W;m) \setminus \bF_{k-1}(W;m)$, i.e., so that 
  \[ \dim_\mK(\ker(g-i_1)^\perp)\ =\ k\ . \]
  We choose a $\mK$-linear isometry
  \[ \psi \ : \ \mK^k \ \xra{\ \iso \ } \ \ker(g-i_1)^\perp\ . \]
  Then $g$ restricts to a linear isometric embedding
  \[ \bar g \ : \ \ker(g-i_1)^\perp \ \to \  \ker(g-i_1)^\perp\oplus\mK^m\ ,  \]
  and we can define the linear isometric embedding  $f:\mK^k\to \mK^{k+m}$ by
  \[ f \ = \  (\psi^{-1}\oplus\mK^m) \circ \bar g \circ\psi \ .\]
  Then $g={^\psi f}$, and $f$ satisfies $\ker(f-i_1)=0$,
  so that $f$ belongs to $\bL^\mK(\mK^k,\mK^{k+m})\setminus \bF_{k-1}(\mK^k;m)$.
  Since the only choice in this construction was the $\mK$-linear isometry $\psi$,
  and any two choices differ by precomposition with an element of $I(k)$, this proves the claim.
\end{proof}

\end{appendix}

\end{document}